\newtheorem{Proposition}{Proposition}
  \newtheorem{Remark}{Remark}
 \newtheorem{Assumption}{Assumption}
  \newtheorem{Corollary}[Proposition]{Corollary}
  \newtheorem{Lemma}[Proposition]{Lemma}
  \newtheorem{Theorem}{Theorem}
 \newtheorem{Note}[Remark]{Note}
\newcommand {\z}{{\noindent}}
\def\b#1{\textcolor{black}{#1}}
\def\epsilon{\varepsilon}
\def\ve{\varepsilon}
 \def\HH{\mathbb{H}}
\def\CC{\mathbb{C}}
 \def\RR{\mathbb{R}}
\def\Re{\mathrm{Re}}
\def\ds{\displaystyle}
 \def\({\left(} \def\){\right)} \makeindex
\author{O. Costin, M. Huang} \address{Mathematics Department\\The Ohio State University\\Columbus, OH 43210}\address{Department of Mathematics\\City University of Hong Kong\\HONG KONG
}
\title{Decay estimates for One-dimensional wave equations with inverse power potentials}
\begin{document}
$ $ \vskip -0.2cm
\begin{abstract}
  We study the one-dimensional wave equation with an inverse power potential
  that equals $const.x^{-m}$ for large $|x|$ where
  $m$ is any positive integer greater than or equal to 3. We show that the solution decays pointwise like
  $t^{-m}$ for large $t$, which is consistent with existing
  mathematical and physical literature under slightly different assumptions
  (see e.g. \cite{phy1}, \cite{ds1}, \cite{sch1}).

  Our results can be generalized to potentials consisting of a finite
  sum of inverse powers, the largest of which being
  $const.x^{-\alpha}$ where $\alpha>2$ is a real number, as well as potentials of the form $const.x^{-m}+O( x^{-m-\delta_1})$ with $\delta_1>3$.
\end{abstract}
\vskip -2cm
\maketitle
\section{Introduction}
There is an extensive literature - both mathematical and physical- on the
on decay estimates for wave equations and Schr\"odinger equations with a potential, starting with Strichartz's work for $V=0$ \cite{Strichartz,Strichartz2}. The case $V\ne 0$, important in physics,  is the subject of many recent papers where a variety of modern analytical tools and different assumptions on $V$  are used (see \cite{Andersson,ds1,dancona1,dancona2,price} and also
 \cite{sch1} for a survey).

In the physical community the corresponding
problem goes by the name of tails, and the precise description of these tails is an important issue in scattering
theory. Based on nonrigorous and numerical methods, physicists predicted that the solutions to wave equations on the line with potentials decaying like
$|x|^{-\alpha}$ as $|x|\to\infty$ will decay in time like $t^{-\alpha}$, see for example \cite{phy1,phy2}.
Mathematically, a recent study by R. Donninger and W. Schlag (\cite{ds1})
showed that for potentials $V(x)$ decaying like $|x|^{-\alpha}$
where $2<\alpha\leqslant 4$ with no bound state or zero energy resonance, the solution $\psi$ to the one-dimensional wave equation \begin{equation}
\label{wave}
\frac{\partial^2\psi(x,t)}{\partial t^2}-\frac{\partial^2\psi(x,t)}{\partial x^2}+V(x)\psi(x,t)=0
\end{equation}is bounded by
$t^{-\alpha}$ for large $t$. They also obtained a similar estimate for the more important Regge-Wheeler potential, though it is not known whether the estimate is sharp (see \cite{price}).

The purpose of this paper is to  give \textcolor{black}{\it sharp estimates} for the decay of $\psi$
 where $V(x)=const.x^{-m}$ for large $|x|$ (the constants are allowed to be different for positive and
negative $x$) where $m\in \mathbb{N}$ and $m\geqslant3$. The result is consistent
with \cite{ds1} and confirms the predictions by physicists.

Our method is based on inverse Laplace transform of the equation in $t$, a technique first used
to study the time decay of Schr\"{o}dinger equations (see e.g. \cite{om1,CMP1}), and it can applied to potentials consisting of a finite
  sum of inverse powers, the largest of which being
  $const.x^{-\alpha}$ where $\alpha>2$ is a real number, as well as potentials of the form $const.x^{-m}+O( x^{-m-\delta_1})$ with $\delta_1>3$ (see Section \ref{extend}). The advantage of our approach is that it gives sharp estimates based on explicit calculations. A further refinement of this approach is expected to lead to a proof of Price's Law on Schwarzschild black holes (see e.g. \cite{price}).

\section{Setting and main results}
 We analyze the wave equation \eqref{wave} under the assumptions:
\begin{Assumption} \label{Assu1}{\rm
(i) The potential $V$ is such that the one-dimensional Schor\"odinger
  operator $A:=-\frac{d^2}{dx^2}+V(x)$ has no bound states
and no zero energy resonances.

(ii) $V$ is $ m+2$ times differentiable.

(iii) As $x\to\pm\infty$ we have $V(x)=const._{\pm}x^{-m}$  where $m\in \mathbb{N}$ and $m\geqslant3$.}
\end{Assumption}
\z The solution to \eqref{wave} (cf. \cite{ds1}) can be written as
$$\psi(t)=\cos(t\sqrt{A})\psi_0+\frac{\sin(t\sqrt{A})}{\sqrt{A}},\psi_1\ \ \  \psi_0(x):=\psi(x,0),\ \psi_1(x)=\ds\frac{\partial\psi(x,0)}{\partial t}
$$ where $\psi_{0,1}\in L^2(\mathbb{R})$.

 Our main results are

\begin{Theorem}\label{T1} Under Assumption \ref{Assu1} we have
\label{xbounds}
$$\frac{\sin(t\sqrt{A})}{\sqrt{A}}\psi_1(x)=\hat{r}_1(x) \langle t \rangle^{-m}+\langle t \rangle^{-m}R_1(x,t)$$
$$\cos(t\sqrt{A})\psi_0(x)=\hat{r}_0(x)\langle t \rangle^{-m-1}+\langle t \rangle^{-m-1}R_0(x,t)$$
 where
$$||\langle x\rangle^{-2}\hat{r}_{j}(x)||_{\infty}\lesssim ||\langle
x\rangle^2\psi_{j}(x)||_{1},\ j=0,1$$
$$||\langle x\rangle^{-m-2}R_1(x,t)||_{\infty}\leqslant ||\langle x\rangle^{m+2}\psi_1(x)||_{1}$$
$$||\langle x\rangle^{-m-3}R_0(x,t)||_{\infty}\leqslant  ||\langle x\rangle^{m+3}\psi_0(x)||_{1}+||\langle x\rangle^{m+3}\psi_0'(x)||_{1}$$
Here $\langle x\rangle:=(1+x^2)^{1/2}$, and for $j=1,2$ the infinity norms of $R_{j}(x,t),\,j=1,2$ are in both $x$ and $t$, and $\lim_{t\to\infty}R_{j}(x,t)=0,$. Moreover, $\hat{r}_j(x)$ are nonzero for generic initial data (cf. Remark \ref{gende}).
\end{Theorem}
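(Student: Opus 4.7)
I would represent both propagators as inverse Laplace transforms of the resolvent of $A$, deform the Bromwich contour onto the imaginary $p$-axis (which coincides with the branch cut produced by the continuous spectrum $[0,\infty)$ of $A$), and extract the decay rates from the low-energy behavior of the resolvent. For $\Re p>0$,
\begin{equation*}
\cos(t\sqrt A)\psi_0(x)=\frac{1}{2\pi i}\int_{c-i\infty}^{c+i\infty}\!\!e^{pt}\,\frac{p}{p^2+A}\psi_0(x)\,dp,\qquad\frac{\sin(t\sqrt A)}{\sqrt A}\psi_1(x)=\frac{1}{2\pi i}\int_{c-i\infty}^{c+i\infty}\!\!e^{pt}\,\frac{1}{p^2+A}\psi_1(x)\,dp.
\end{equation*}
By Assumption \ref{Assu1}(i) (no bound states, no zero-energy resonance), $(p^2+A)^{-1}$ is analytic in $\CC\setminus i\RR$ with continuous boundary values on $i\RR\setminus\{0\}$, so the contour can be collapsed onto $i\RR$ and the propagators become oscillatory integrals of the form $\int e^{i\omega t}(\cdots)\,d\omega$.

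The resolvent kernel is expressed through the Jost solutions $f_\pm(x,p)$ of $(-\partial_x^2+V+p^2)f=0$ satisfying $f_\pm(x,p)\sim e^{\mp p x}$ as $x\to\pm\infty$:
\begin{equation*}
\Bigl(\tfrac{1}{p^2+A}g\Bigr)(x)=\frac{1}{W(p)}\Bigl[f_+(x,p)\!\int_{-\infty}^x\!\!f_-(y,p)g(y)\,dy+f_-(x,p)\!\int_x^\infty\!\!f_+(y,p)g(y)\,dy\Bigr],
\end{equation*}
where $W(p)$ is the Wronskian; absence of a zero-energy resonance forces $W(0)\neq 0$. The crucial technical step is a low-energy expansion of $f_\pm$ obtained by iterating the associated Volterra integral equation: because $V(x)=\mathrm{const}_\pm x^{-m}$ at infinity, the iterates $\int^\infty y^kV(y)\,dy$ converge only through $k\leqslant m-2$, so $f_\pm(x,p)$ is a polynomial in $p$ with explicit smooth coefficients (each growing algebraically in $x$), plus a non-analytic remainder of effective order $p^{m-1}$ (typically with a $\log p$ or branch-cut factor). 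An analogous expansion holds for $W(p)$, and this non-analytic remainder is the only source of long-time decay.

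Substituting the expansion into the contour integral, the polynomial-in-$p$ part of the integrand is entire and, by pushing the contour to $\Re p\to-\infty$, contributes nothing for $t>0$. The remaining integral of the non-analytic piece against $e^{pt}$ along $i\RR$ is then handled by a Watson-type argument (repeated integration by parts in $\omega$), yielding the rate $t^{-m}$ for $\sin(t\sqrt A)/\sqrt A\cdot\psi_1$ with leading coefficient
\begin{equation*}
\hat r_1(x)=\frac{C_m}{W(0)}\Bigl[f_+(x,0)\!\int_{-\infty}^x\!f_-(y,0)\psi_1(y)\,dy+f_-(x,0)\!\int_x^\infty\!f_+(y,0)\psi_1(y)\,dy\Bigr].
\end{equation*}
For $\cos(t\sqrt A)\psi_0$ the extra factor of $p$ in the Laplace representation supplies one additional order of smallness, promoting the rate to $t^{-m-1}$; integrating by parts once in $y$ trades this $p$ for a $\psi_0'$, which is why $\|\langle x\rangle^{m+3}\psi_0'\|_1$ appears on the right. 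The pointwise $x$-weights in the theorem are then recovered by tracking the algebraic $x$-growth of the coefficients in the Jost expansion: the leading terms in $\hat r_j$ only involve the lowest-order coefficients and carry the weight $\langle x\rangle^2$, while the remainders $R_j$ inherit the higher weights $\langle x\rangle^{m+2}$ and $\langle x\rangle^{m+3}$. Non-vanishing of $\hat r_j$ for generic data is immediate since the kernels above are not identically zero in $y$ at fixed $x$.

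The main obstacle is the low-energy expansion of $f_\pm$: one must establish the precise order of non-smoothness at $p=0$ with sharp pointwise-in-$x$ bounds on both the polynomial coefficients and the remainder, uniformly enough for the Watson-type argument to yield the \emph{sharp} $t^{-m}$ rate and not a faster one. The delicate point is that the Volterra iterates fail to converge precisely at the critical order $k=m-1$, so the residual equation for the remainder cannot be closed by naive perturbation; it must be treated directly, e.g.\ by comparing with the explicit Bessel-type solution of the pure tail equation $-f''+c_\pm x^{-m}f+p^2f=0$. Once this expansion is in hand with the correct $x$-growth and $p$-regularity, the rest of the proof reduces to standard contour and integration-by-parts estimates.
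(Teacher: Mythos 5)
Your overall strategy coincides with the paper's: Laplace transform in $t$, resolvent written through the Jost solutions, low-energy expansion, Watson-type argument for the singular part. But there is a genuine gap at the heart of the argument. The first non-analytic term in the low-energy expansion of the Jost functions is \emph{not} of order $p^{m-1}$: the Volterra iteration you describe already fails at the coefficient of $p^{m-2}$ (the coefficient of $p^{k}$ requires $\int^\infty (y-x)^{k+1}V(y)\,dy<\infty$, i.e.\ $k\leqslant m-3$), so $f_\pm$ carries a term of order $p^{m-2}\ln p$. Fed naively into your Watson argument this would produce a decay $t^{-(m-1)}$, not the claimed sharp $t^{-m}$. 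The paper's resolution is structural: the $x$-dependent coefficient of the $\epsilon^{m-2}\ln\epsilon$ term is proportional to the zero-energy solution itself, so $y_\pm$ factors as $r_\pm(\epsilon)\bigl(\Phi_1+B_1\epsilon^{m-1}\ln\epsilon+\dots\bigr)$ with a scalar $r_\pm(\epsilon)=1+a_1\epsilon^{m-2}\ln\epsilon+\dots$ that also multiplies the Wronskian and therefore cancels identically in the Green's function (Proposition \ref{jostf} and Note \ref{wron}). Without establishing this cancellation your proof does not yield the stated rate. A related symptom: your closed formula for $\hat r_1$ as the zero-energy Green's function applied to $\psi_1$ is not the correct leading coefficient; the actual coefficient $r_3$ is the cross term between the $O(1)$ and the $O(\epsilon^{m-1}\ln\epsilon)$ parts of $y_\pm$ (the functions $r_0^\pm$ and $r_1^\pm$ in Proposition \ref{allx}), which is why $\hat r_1$ grows like $\langle x\rangle^2$ rather than $\langle x\rangle$.

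Two further steps are asserted but would fail as stated. First, "pushing the contour to $\Re p\to-\infty$ so the polynomial part contributes nothing" is not available: the Wronskian and the remainder are not entire, and the left half-plane is the second sheet where the Jost functions grow. The paper instead keeps the regular remainder on $i\mathbb{R}$ and integrates by parts $m$ (resp.\ $m+1$) times in $\epsilon$, which requires uniform bounds on $\partial_\epsilon^k R$ up to $|\epsilon|=\infty$ — a separate large-$\epsilon$ analysis (Lemma \ref{sph}, Section \ref{yleps}) your sketch omits entirely — plus a separate explicit treatment of the free-propagator piece $\mathcal{G}_0$. Second, the uniformity in $x$ of the low-energy expansion is a genuine singular-perturbation problem (competition between $V(x)$ and $\epsilon^2$ in the two regimes $|\epsilon|\lessgtr 1/\langle x\rangle$); your suggestion of comparing with the Bessel solutions of the pure tail equation captures only the leading order, whereas the paper regularizes the full problem by a second (inverse) Laplace transform in $x$, reducing it to an ODE with regular singular points whose solution is controlled by convergent series with factorial estimates. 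These are the pieces that would have to be supplied before the contour and integration-by-parts estimates you invoke can close the proof.
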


In Section \ref{extend} we discuss generalizations where $V$ is a sum of inverse powers, and an extension of results of the type in \cite{sch1}. The special case $m=2$ will also be briefly discussed in Note \ref{m2}
below.

The basic strategy we use  is to take the Laplace transform in $t$ of \eqref{wave} and study the solutions of the transformed equation.
Laplace transformability is shown in  Proposition \ref{lapt} in the Appendix;
its existence does not require Assumption \ref{Assu1} (i);  the result of
Theorem \ref{T1} is however contingent on it.

\section{Discussion of Methods and Main Steps of Proof}
We use integral transforms to regularize the problem. First we take the Laplace transform \b{($\mathcal{L}$)} in $t$ of equation \eqref{wave}, which transforms \eqref{wave} into an ODE (see \eqref{psiwave} below) in the dual variable $\epsilon$. \b{The position of the singularities of the solution of  \eqref{psiwave}  indicates possible exponential behavior and oscillations  while the type of singularity is related to the type of power law decay.} \b{As a very simple illustration where  the duality between decay in $t$ and singularities in $\epsilon$  is manifest, consider the function $f(t)=t^{-\beta}e^{-\alpha t} $ with $0\le \beta\le 1$ and where $\alpha$ can have nonzero imaginary part. The Laplace transform of $f$ is
$$ \mathcal{L}(f)=\hat{f}=\Gamma (1-\beta )(\varepsilon +\alpha)^{\beta-1}$$}
 \b{The asymptotics in $t$ follows from Watson's lemma, after deforming the contour as in Fig. 1, where we took for definiteness $\epsilon=0$. In the actual problem, the only singularity contributing to the asymptotics is indeed $\epsilon=0$, which is a branch point. The type of the singularity follows from the small $\epsilon$ asymptotic behavior of the associated homogenous equation \eqref{eq:hom1}.  The asymptotic  analysis {\it in $\epsilon$} of the full problem is complicated by the fact that, when $\epsilon$ tends to 0 and $x$ goes to $\infty$, there is a competition between $V(x)$ and $\epsilon^2$ and as a result, this is effectively a singularly perturbed problem.  To regularize it, we apply another transform to \eqref{eq:hom1}, an inverse Laplace transform in $x$. The resulting equation is the ODE \eqref{eq:eqdiff} with only regular singular points. Global analysis of \eqref{eq:eqdiff} involves long but conceptually relatively simple calculations, which are carried out in Section \ref{propers}. The singularity structure of the Laplace transform of the solution $\psi$ of \eqref{psiwave} is obtained in Section \ref{prothm},  where the main results are the expansions \eqref{hardeq} and \eqref{hardeq0}, which are valid for all $\epsilon\in \overline{\mathbb{H}}\backslash \{0\}$ and $x\in\mathbb{R}$. These expansions contain the crucial leading terms $r_3(x)\frac{\epsilon^{m-1}\ln\epsilon}{(1+\epsilon\langle x\rangle)^{m+2}}$ and $r_4(x)\frac{\epsilon^{m}\ln\epsilon}{(1+\epsilon\langle x\rangle)^{m+3}}$ that yield the desired time decay, which is shown in Section \eqref{timedec}. The terms in the inverse Laplace transform formula \eqref{finalpre} are estimated differently: the contribution of the leading term is found by contour deformation--see Fig 1, while the remainder $R(x,\epsilon)$ is simply estimated by integration by parts, while the last term is estimated using \eqref{g0est}}.
\begin{figure}
\input{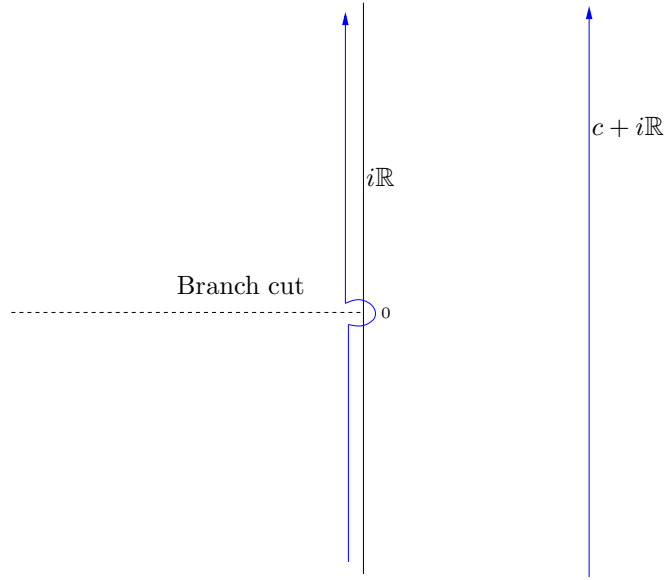}
\label{Fig1}
\caption{\textcolor{black}{Deformation of the inverse Laplace contour for the main singular terms; the leftmost contour allows for obtaining sharp estimates using Watson's lemma. (The point $-\frac12$ is not special--it is chosen for convenience.)}}
\end{figure}
\section{Regularity Properties of the Laplace Transformed Equation}\label{propers}

By taking the $t$-Laplace transform of  \eqref{wave} we obtain the ODE
\begin{equation}
\label{psiwave}
\hat{\psi}''(x,\epsilon)=\left(V(x)+\epsilon^2\right)\hat{\psi}(x,\epsilon)+\psi_1(x)+\epsilon\psi_0(x)
\end{equation}
where $$\hat{\psi}(x,\epsilon)=\int_0^{\infty}e^{-\epsilon t}\psi(x,t)dt$$

The analysis relies on properties of the exponentially decaying solutions of the homogeneous equation
\begin{equation}
 \label{eq:hom1}
  y''(x)=\left(V(x)+\epsilon^2\right)y(x)
\end{equation}
For $x>0$  and $\epsilon>0$ the exponentially decaying solution has the behavior
  \begin{equation}
    \label{eq:defs}
y(x)=y_+(x)= e^{-\epsilon x}(1+s(x;\epsilon))
  \end{equation}
where $s(x;\epsilon)$ is
  an $o(1)$ power series in $1/x$, as $x\to\infty$.

Similarly, the the exponentially decaying solution for $x<0$  and $\epsilon>0$ satisfies
$$y_-(x)\textcolor{black}{=} e^{\epsilon x}(1+s_-(x;\epsilon))=e^{-\epsilon x}(1+o(1))$$
The solution of the Laplace transformed equation \eqref{psiwave} can be written as
\begin{multline}
\label{invlap}
\hat{\psi}(x,\epsilon)=\mathcal{G}(\psi_1+\epsilon\psi_0);\
\mathcal{G}(\psi):=\frac{1}{W(\epsilon)}\(y_-(x)\int_{\infty}^{x}y_+(u)\psi(u)du-y_+(x)\int_{-\infty}^{x}y_-(u)\psi(u)du\)
\end{multline}
where $W=y_+y'_--y'_+y_-$ is the Wronskian.

\begin{Note}\label{m2}
For $V(x)=a/x^m$ and $m=1, 2$ the equation
can be solved in terms of special functions.

For $V(x)=a/x^2$, the
solution that decays like $e^{-\epsilon x}$ as $x\to \infty$
is given in terms of the modified Bessel function $K$ as
$$y^+=\sqrt{2\epsilon x/\pi}K_{\alpha}(\epsilon x);\ \alpha =\sqrt{a+1/4}$$
For small $\epsilon$ and fixed $x$, $y^+$ has the form
\begin{equation}
  \label{loc}
  C_1(x) \epsilon^{1+\alpha}A_1(\epsilon)+
 C_2(x) \epsilon^{1-\alpha}A_2(\epsilon)
\end{equation}
with $A_1,A_2$ analytic.

\end{Note}

For $m\geqslant3$, the exponentially decaying functions have asymptotic expansions in integer powers of $\epsilon$ and $\epsilon\ln\epsilon$.
By symmetry it is sufficient to study the case $x>0$.
The function $s$ satisfies
\begin{equation}
  \label{eq:eqs}
  s''-2\epsilon s'-V(x)s=V(x)
\end{equation}
It is easy to see
  that an $o(1)$ solution of (\ref{eq:eqs}) exists and is unique.

The remaining of this section is dedicated to detailed calculations to obtain the regularity properties of $y(x;\epsilon)$ in \eqref{eq:defs}, in particular near the branch point $\epsilon=0$. In Section \ref{pros} we take the inverse Laplace transform of \eqref{eq:eqs}, study the transformed function $\hat{s}$, and obtain the expansions in Corollary \ref{coro1}. In Section \ref{smallepss} we calculate the small $\epsilon$ behavior of $s(x;\epsilon)$ (expansion \eqref{newad1} in Lemma \ref{lem5}). In Section \ref{ypm} we obtain the small $\epsilon$ behavior of $y(x;\epsilon)$. Finally in Section \ref{yleps} we obtain the regularity properties of $y$ for large $\epsilon$.

\subsection{\b{The inverse Laplace transform of $s(x;\epsilon)$ in $x$}}\label{pros}
We first assume $V$ is $m+2$ times differentiable and
$$V(x)=
 \begin{cases}
    v_1 x^{-m}, & x\geqslant x_+>1 \\
    v_2 x^{-m}, & x\leqslant x_-<-1 \\
 \end{cases}
$$
By rescaling $x$ and $\epsilon$ one can make $v_{1,2}=\pm 1$.

To analyze the behavior of $s(x;\epsilon)$ for small $\epsilon$ and $x\geqslant x_+$, it is convenient to study its inverse Laplace transform, this time in $x$, to regularize the behavior at turning points. Inverse Laplace transformability
does not need to be proved at this stage, since at the end we show  that the Laplace transform of the
solution to the dual equation solves (\ref{eq:eqs}). We
let
$$\hat{s}(q)=:H(q)/[q(q+2\epsilon)]$$ be the formal inverse Laplace transform of $s$ ($x\mapsto q$ with $\Re x>x_+$), and obtain
\begin{equation}
  \label{eq:eqL}
  H(q)=\frac{v_1}{(m-1)!}q^{m-1}+\mathcal{P}^m\frac{v_1H(q)}{q(q+2\epsilon)}
\end{equation}  where $\mathcal{P}F(q)=\int_0^q F(u)du$.  With the change of variable
$q=\epsilon{\tau} $, $H(q)=F({\tau})$, we obtain
\begin{equation}
  \label{eq:eq7}
  F({\tau})=\frac{v_1\epsilon^{m-1} {\tau}^{m-1}} {(m-1)!}+\epsilon^{m-2} \mathcal{P}^m \frac{v_1F({\tau})}{{\tau}({\tau}+2)}
\end{equation}
The singularity structure of $s(x;\epsilon)$ (cf. \eqref{eq:eqs}) for small $\epsilon$ depends on the behavior of $F({\tau})$
for large $\tau$ in the complex plane. Let
$$\HH:=\{z:\Re\,z>0\};\ \ \HH_1:=\{z\in \mathbb{C}:\arg z \in(-\pi/4,5\pi/4),z\ne0\}$$
In the following we show that  $F({\tau})$ has an asymptotic expansion in powers of $\epsilon$, $\tau^{-1}$ and $\tau^{2-m}\ln \tau$ for small $\epsilon$ and large $\tau$.
\begin{Lemma}\label{genm3}
(i) The function $\hat{s}$ has a Laplace transform in $q$,  $F$ is analytic in $\tau$ for $\Re\, \tau>-2$ and entire in $\epsilon$ and has the convergent expansion
\begin{equation}
  \label{eq:ser2m}
  F(\tau)=\epsilon^{m-1}F_{m-1}({\tau})+ \sum_{j\geqslant m}\epsilon^{j_m} F_j({\tau})
\end{equation}
where $j_m=(m - 2) (j - m + 1) + m - 1$, $F_{m-1}(\tau)=v_1\tau^{m-1}/(m-1)!$,
while for $j\geqslant m$ we have
\begin{equation}
  \label{eq:eqFj1}
 F_j(\tau)=\tau^{j_m}\sum_{n=0}^{j-m+1}\tau^{-n(m-2)}(\ln(\tau))^nW_n(\tau^{-1});\ \ \text{\rm $W_n(z)$  analytic for $|z|<\frac12$}
  \end{equation}
 Furthermore, $F_j$ is analytic in $\HH_1$ and
      \begin{equation}
  \label{eq:bdF0}
|F_j|\lesssim  \frac{{|\tau|}^{j_m}}{((j_m-m+1)!)^{\frac{m}{m-2}}}\  \text{ for }\  \tau \in \HH_1
\end{equation}

(ii) For $|\tau|\ge 3$ we have\footnote{In fact we only need to keep the first few terms of \eqref{eq:eqFj1} and estimate the remainder.}
\begin{multline}
  \label{eq:estcoeffs}
  F_j(\tau)=F_j^{[0]}(\tau)+\tau^{j_m-m} G_{j}(\tau);\ \\
F_j^{[0]}(\tau):=\tau^{j_m}\bigg(\sum_{n=0}^{m-1}a^{[j]}_{n,0}\tau^{-n}+ \sum_{k=1}^2 \sum_{n=0}^{2-k} a^{[j]}_{k(m-2)+n,k}\tau^{-k(m-2)-n}(\ln\tau)^k \bigg);\  \\
\end{multline}
where  $a^{[j]}_{l,k}=0$ if $j<m+k-1$ or $l>m$, and the following estimates hold: for some $c_1>0$ (independent of all indices above)
\begin{equation}
  \label{eq:estcoeff}
 |a^{[j]}_{n,l}|\leqslant \frac{c_1^n}{((j_m-2m+2)!)^{\frac{m}{m-2}}},\ \  \sup_{\tau\in\HH_1}|(|\ln \tau|+1)^{-3}G_{j}(\tau)|\leqslant \frac{c_1 j}{((j_m-2m+1)!)^{\frac{m}{m-2}}}
\end{equation}

\end{Lemma}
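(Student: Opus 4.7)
The plan is to view \eqref{eq:eq7} as the fixed-point equation $F = F_0 + \mathcal{T}F$, where $\mathcal{T}F := \epsilon^{m-2}\mathcal{P}^m[v_1 F/(\tau(\tau+2))]$ and $F_0(\tau,\epsilon) := v_1\epsilon^{m-1}\tau^{m-1}/(m-1)!$, and to solve it by Neumann iteration $F = \sum_{k\ge 0}\mathcal{T}^k F_0$. Each application of $\mathcal{T}$ multiplies by $\epsilon^{m-2}$, so the $k$-th iterate contributes to the $\epsilon^{(m-2)k+m-1}$ coefficient; setting $j := m-1+k$ gives $j_m = (m-2)(j-m+1)+m-1$, which identifies the indexing of \eqref{eq:ser2m} and yields $F_{m-1}(\tau)= v_1\tau^{m-1}/(m-1)!$ directly from $F_0$. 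Entire dependence on $\epsilon$ is then automatic from the power series structure.

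For the structural form \eqref{eq:eqFj1}, I would use induction on $k$. Expanding $1/(\tau(\tau+2)) = \sum_{\ell\ge 2}(-2)^{\ell-2}\tau^{-\ell}$ for $|\tau|>2$, one computes $\mathcal{P}^m$ termwise on the previous iterate. Integration of $\tau^n(\ln\tau)^r$ produces a polynomial of $\ln$-degree $r$, except when the intermediate exponent of $\tau$ encountered during the $m$-fold $\mathcal{P}$ is exactly $-1$, in which case a single new $\ln\tau$ appears. Since only finitely many such resonant exponents are hit per iteration, the maximum $\ln$-degree in $F_j$ grows by at most one per step, matching $n\le j-m+1$. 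Analyticity of the $W_n(\tau^{-1})$ at $0$ follows because, after removing the explicit logarithmic terms, termwise integration of an absolutely convergent series in $\tau^{-1}$ remains an absolutely convergent series in $\tau^{-1}$.

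For the factorial bound \eqref{eq:bdF0} and analyticity in $\HH_1$, I would deform the contour in $\mathcal{P}$ to the straight ray from $0$ to $\tau$, rotated within $\HH_1$ so as to keep a uniform separation from $\tau = -2$. Inductively, if $|F_j(\tau)|\le C|\tau|^{j_m}/((j_m-m+1)!)^{m/(m-2)}$ on $\HH_1$, then $|F_j/(\tau(\tau+2))|$ is controlled by the same factor times $|\tau|^{-2}$, and radial integration $\mathcal{P}^m$ produces the factor $1/[(j_m-1)j_m\cdots(j_m+m-2)]$, a product of $m$ consecutive integers of size $\sim j_m^m$. A Stirling comparison shows that the claimed weight $N_j := ((j_m-m+1)!)^{m/(m-2)}$ also scales up by $\sim j_m^m$ when $j\to j+1$, so the inductive estimate is preserved with a uniform constant; the Neumann series then converges uniformly on $\HH_1$ and analyticity follows.

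The refined decomposition \eqref{eq:estcoeffs}--\eqref{eq:estcoeff} is the technically most involved part, and will be the main obstacle. For $|\tau|\ge 3$, writing $1/(\tau(\tau+2)) = \sum_{\ell=2}^{M}(-2)^{\ell-2}\tau^{-\ell} + O(\tau^{-M-1})$ with $M = 2m$ and tracking the first two resonant integrations explicitly, one obtains the coefficients $a^{[j]}_{n,0}$ (polynomial part), $a^{[j]}_{n,1}$ (single-$\ln$ part first appearing at $j\ge m$), and $a^{[j]}_{n,2}$ (double-$\ln$ first appearing at $j\ge m+1$), all satisfying the factorial bound of part (i). The remainder $G_j$ satisfies an integral equation with driving term of size $O(|\tau|^{j_m-m})$ times the factorial weight of $F_{j-1}$; the extra $(|\ln\tau|+1)^3$ factor reflects the single additional $\ln$-resonance that can still accumulate beyond what is captured in $F_j^{[0]}$, while the linear-in-$j$ growth in \eqref{eq:estcoeff} comes from the combinatorial sum over which of the $\sim j$ prior iteration steps contributes the dominant resonance. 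The difficulty is purely organizational: preserving these finer estimates simultaneously with the factorial weight under every iteration of $\mathcal{T}$ requires a carefully stratified induction hypothesis separating polynomial, single-log, double-log, and higher-order remainders.
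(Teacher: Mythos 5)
Your outline for part (i) follows broadly the same iterative strategy as the paper (the recurrence $F_{j+1}=\mathcal{P}^m[F_j/(\tau(\tau+2))]$, induction for the factorial bound via the product of $m$ consecutive integers, which is exactly the paper's inequality $|\mathcal{P}^m\tau^{j_m-2}|=|\tau|^{j_m+m-2}/\prod_{k=0}^{m-1}(j_m-1+k)$ combined with \eqref{ineq1}). However, two points in (i) are not sound as written. First, you never address why $\hat s$ (equivalently $F$) is Laplace transformable in the first place; the paper gets this from the subexponential bound \eqref{eq:eqC2}, obtained by standard ODE asymptotics applied to \eqref{eq:eqdiff}, and your Neumann-series bound on each $F_j$ does not by itself control the sum uniformly in a way that yields growth of order $e^{o(|\tau|)}$ without the summation argument the paper carries out in the proof of Lemma \ref{hqexp}. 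Second, your derivation of the structural form \eqref{eq:eqFj1} by ``expanding $1/(\tau(\tau+2))=\sum_{\ell\ge 2}(-2)^{\ell-2}\tau^{-\ell}$ and computing $\mathcal{P}^m$ termwise'' is not legitimate as stated: $\mathcal{P}$ integrates from $0$, where the expansion in negative powers is invalid and the termwise integrals diverge. One must either match the large-$\tau$ asymptotic antiderivative to the actual $\int_0^\tau$ (picking up polynomial ``constants of integration'' of degree $\le m-1$ at each of the $m$ integrations, which must then be shown to be absorbed into the $W_n$), or argue as the paper does, via the annihilating operator $L^{j-m+2}F_j=0$ with $Lg=\tau(\tau+2)g^{(m)}$, whose indicial polynomial \eqref{eq:eqlambda} at infinity yields \eqref{eq:eqFj1} directly by Frobenius theory. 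Your resonance count (log degree increases by at most one per application of $\mathcal{P}^m$) is correct and matches $n\le j-m+1$, but it rests on the unjustified termwise step.

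For part (ii) you have not given a proof; you have described what a proof would have to accomplish and explicitly deferred the ``carefully stratified induction,'' which is precisely the content of \eqref{eq:estcoeff}. The quantitative claims there are not organizational trivia: the bound $|a^{[j]}_{n,l}|\le c_1^n((j_m-2m+2)!)^{-m/(m-2)}$ requires setting up the triangular recurrence \eqref{eq:recA} for the normalized coefficients $A^{[j]}_{n,l}$ (using \eqref{eq:eqrec2}) and solving it in the order $n=0,\dots,m-1$ and, for fixed $n$, $l=2,1,0$; the remainder bound with the weight $(|\ln\tau|+1)^3$ and the factor $j$ requires the recurrence \eqref{eq:eqrem} for $R_j=\tau^{j_m-m-1}G_j$ (whose forcing term $p_1(\ln\tau)$ must itself be estimated from the already-proved coefficient bounds), the integration inequality \eqref{eq:eqln012}, and the factorial ratio inequality \eqref{eq:factest} to close the induction with the shifted weight $((j_m-2m+1)!)^{m/(m-2)}$. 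Your heuristic explanations of where the $(\ln)^3$ and the linear-in-$j$ growth come from are consistent with this, but without exhibiting and closing these recurrences the estimates in \eqref{eq:estcoeff} remain unproved.
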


\begin{proof}
 We analyze the case  $v_1=1$; if $v_1=-1$, the arguments are very similar.
We look for solutions to \eqref{eq:eq7} which are $O(\epsilon^{m-1} {\tau}^{m-1})$ for small ${\tau}$. Consider
the space $\mathcal{B}$ of functions of the form $f({\tau})={\tau}^{m-1} \tilde{G}({\tau})$ where $\tilde{G}$ is analytic
for, say,  $|{\tau}|<\tau_0$  for arbitrarily large $\tau_0>0$ with the norm  $\|f\|=\sup\{|\tilde{G}({\tau})|:\ |\tau|<\tau_0;\tau>-a>-2\}$.
We see
that this is a Banach space, and eq. (\ref{eq:eq7}) is contractive
in $\mathcal{B}$. The solution
of (\ref{eq:eq7}) is unique, and it is analytic for small ${\tau}$. As a differential equation (\ref{eq:eq7}) reads
\begin{equation}
  \label{eq:eqdiff}
  F^{(m)}=\frac{\epsilon^{m-2} F}{{\tau}({\tau}+2)}
\end{equation}
The argument above, or Frobenius theory, shows that (\ref{eq:eqdiff}) also has
a unique solution which is of the form $ \frac{1}{(m-1)!}\epsilon^{m-1} {\tau}^{m-1}(1+o(1))$ for small ${\tau}$. The solution is obviously analytic for
$\Re\, {\tau}>-2$, since the only singularities of equation \eqref{eq:eq7} are
${\tau}=0$ and ${\tau}=-2$, and it is entire in $\epsilon$ for $\Re\,
{\tau}>-2$, since the equation depends analytically on $\epsilon$.

By standard ODE asymptotic results \cite{Wasow} we see that any
solution of (\ref{eq:eqdiff}) is uniformly bounded in $\CC$ by
\begin{equation}
  \label{eq:eqC2}
 C(\epsilon) |\tau|^{\frac{m-1}{m}}e^{ \frac{m}{m-2}|\epsilon|^{1/m}|\tau|^{1-2/m}}
\end{equation}
for some $C(\epsilon)>0$. This ensures the necessary
(sub)exponential bounds for taking the Laplace transform in $\epsilon$.

We now look for solutions of (\ref{eq:eqdiff}) in the
form
\begin{equation}
  \label{eq:ser2}
  F=\frac{\epsilon^{m-1} {\tau}^{m-1}} {(m-1)!}  +\sum_{j\geqslant m}\epsilon^{j_m} F_j
\end{equation}
and we show that the expansion \eqref{eq:ser2} is convergent.

The functions $F_j$ satisfy the recurrence
\begin{equation}
  \label{eq:reci}
 F_{j+1}=\mathcal{P}^m  \frac{F_j}{{\tau}({\tau}+2)}, \ j\geqslant  m-1;\ \ F_{m-1}(\tau)=\tau^{m-1}/(m-1)!
\end{equation}
For now, we take ${\tau}$ in $\mathbb{H}_1$.
 It can be checked by induction that the $F_j$s are analytic in $\mathbb{H}_1$
and at zero, and since $|\tau^{j_m}|\leqslant |\tau^{j_m-1}(\tau+2)|$ in $\mathbb{H}_1$ and
$$|\mathcal{P}^m \tau^{j_m-2}|= \frac{|\tau|^{j_m+m-2}}{\prod_{k=0}^{m-1} (j_m-1+k)}\leqslant
\(\frac{(j_m-m+1)!}{((j+1)_m-m+1)!}\)^{\frac{m}{m-2}} |\tau|^{(j+1)_m}$$
\eqref{eq:bdF0} follows by induction. The last inequality above comes from the fact that
\begin{equation}
\label{ineq1}
\(\prod_{k=0}^{m-1} (j_m-1+k) \)^{m-2}\!\!\!\!\!\geqslant \(j_m-1 \)^{m(m-2)} \geqslant  \(\prod_{k=0}^{m-3} (j_m-m+2+k) \)^{m}
\end{equation}

It follows that the series (\ref{eq:ser2}) converges uniformly
on any compact set in $\mathbb{H}_1$. Moreover, we see
that
the function series
\begin{equation}
  \label{eq:ser2h}
 H(q)=\frac{ q^{m-1}} {(m-1)!}+\sum_{j\geqslant  m}\epsilon^{j_m} F_j(q/\epsilon)
\end{equation}
also converges uniformly
in any compact set in $\HH_1$.
Existence of the Laplace transform of $\frac{H(q)}{q(q+2\epsilon)}$ follows
from the bound \eqref{eq:eqC2} for $F$.

 We write \eqref{eq:eq7} as
\begin{equation}
  \label{eq:recd}
\tau(\tau+2) F_{j+1}^{(m)}=F_j, \ j\geqslant m-2;\ \ \ F_{m-2}=0
\end{equation}
Note that $F_{m-1}$ is explicit (see \eqref{eq:reci}).
Let $Lg=\tau(\tau+2)g^{(m)}(\tau)$. Eq. \eqref{eq:recd} implies
\begin{equation}
  \label{eq:eqL0}
  L^{j-m+2} F_j=0;\ \ \ j\geqslant  m
\end{equation}
\begin{Note}
  The indicial polynomial of \eqref{eq:eqL0} at infinity is
\begin{equation}
  \label{eq:eqlambda}
  \prod_{n=0}^{j-(m-1)}\prod_{n'=0}^{m-1}(\lambda-n'-n), \,\,j\geqslant m
\end{equation}
with the convention that a product is one  if the index set is empty,
and, by Frobenius theory,  \eqref{eq:eqlambda} implies \eqref{eq:eqFj1}.
 Eq. \eqref{eq:eqlambda} follows from
 \begin{equation}
    \label{eq:ind1}
    L \tau^\lambda=\tau^{\lambda-m+2}[\lambda(\lambda-1)\cdots(\lambda-m+1)+O(\tau^{-1})]
  \end{equation}
\end{Note}

(ii) The existence of an asymptotic expansion of the form \eqref{eq:estcoeffs} follows from \eqref{eq:eqFj1}. It remains to estimate the coefficients and the remainder (which we do recursively), for which we can assume $j_m\geqslant 2m-2$ since for $j_m<2m-2$ the result follows directly from \eqref{eq:eqFj1}.

 We have
\begin{equation}
  \label{eq:eqrec2}
  L(t^n\ln^l(t))=t^{n-1}\ln^lt (1+2t^{-1})\bigg(n(n-1)\cdots(n-m+1)+\sum_{l=1}^mP_l(\ln t)^{-l}\bigg)
\end{equation}
where $P_l$ are polynomials of degree at most $m-1$ in $n$ and $m$ in $l$. Substituting \eqref{eq:ser2m} in \eqref{eq:recd} using the notation in \eqref{eq:estcoeffs} and taking $
a^{[j]}_{n,l}= ((j_m-2m+2)!)^{-\frac{m}{m-2}}A^{[j]}_{n,l}$
we get the following recurrence for $0\leqslant  l\leqslant  3$ with
$(m-2)l\leqslant  n\leqslant  m-1$
\begin{equation}
\label{eq:recA}
A^{[j]}_{n,l}-C^{[j]}_{n,l} A^{[j-1]}_{n,l}+\sum_{J_{n,l}} C^{[j]}_{n',l';n,l}A^{[j]}_{n',l'}=0
\end{equation}
where $J_{n,l}$ consists of indices earlier than $n,l$:
$J_{n,l}=\{(n,l'):l'>l\}\bigcup \{(n-1,l'):0\leqslant  l'\leqslant  2\}$. In \eqref{eq:recA} we have $0<C^{[j]}_{n,l}<1$ and
for some $c_4>0$ and all $n,n',l,l',j$ we have $|C^{[j]}_{n',l';n,l}|<c_4$. Solving for
$A^{[j]}_{n,l}$ in the order $n=0,1,...,m-1$ and for a fixed $n$ in the
order $l=2,1,0$, the first inequality in \eqref{eq:estcoeff} follows
inductively on $j$.

 \begin{Note}{\rm  Let $R_j=\tau^{j_m-m-1}G_j$.  Then, $R_j$ satisfies the recurrence
  \begin{equation}
    \label{eq:eqrem}
    \tau(\tau+2)R_j^{(m)}=R_{j-1}+\tau^{j_m-2m+1}p_1(\ln \tau)
  \end{equation}
where $p_1$ is a quadratic polynomial with coefficients bounded by $$\ds
\frac{c_2}{[((j-1)_m-2m+2)!]^{\frac{m}{m-2}}}$$ for some $j-$independent  $c_2$.}

  Equation \eqref{eq:eqrem} simply follows by writing  $F_j(\tau)=F_j^{[0]}+R_j(\tau)$,  calculating the finite sum $LF_j^{[0]}$ explicitly using \eqref{eq:ind1} and \eqref{eq:eqrec2} and estimating the coefficients of $p_1$ using the first inequality in \eqref{eq:estcoeff}.
\end{Note}

\z {\it Proof of the last inequality in \eqref{eq:estcoeff}}. Since $j_m=(j-1)_m+m-2$, we have by  \eqref{eq:eqrem}
\begin{multline}\label{eq:eqrem1}
|R_j|=|\mathcal{P}^m \frac{\tau^{(j-1)_m-m-1}( G_{j-1}(\tau)+p_1(\ln \tau))}{\tau+2}|
\le \mathcal{P}^m|\tau^{(j-1)_m-m-2}(\tau G_{j-1}(\tau)+p_1(\ln \tau)) |
  \end{multline}

 Also, by direct integration we have
\begin{equation}
  \label{eq:eqln012}
 \int_0^\tau |t^n\ln^l t|dt\leqslant  c_3(n+1)^{-1} |\tau^{n+1}|(|\ln^l \tau|+1);\ \ l=0,1,2; \ n>0
\end{equation}
for some $c_3$. The rest  follows from \eqref{eq:eqrem1} and \eqref{eq:eqln012} by induction on $j$,
noting that (cf. also \eqref{ineq1})
\begin{equation}
  \label{eq:factest}
  \frac{1}{\prod_{k=0}^{m-1}((j-1)_m-m-1+k)} \le \(\frac{((j-1)_m-2m+1)!}{(j_m-2m+1)!}\)^{\frac{m}{m-2}}
\end{equation}
\end{proof}
\z Since $\hat{s}(q)=H(q)/[q(q+2\epsilon)]$ and $H(q)=F(q/\epsilon)$, the expansions for $F$ in Lemma \ref{genm3} allows us to obtain the corresponding  expansions for $H$ and $\hat{s}$:
\begin{Lemma}\label{hqexp}
(i) For $|q|\geqslant 3|\epsilon|$ \footnote{$3$ can be replaced by any constant
  bigger than $2$.} we have the expansion
\begin{multline}\label{eq:estH4}
  H(q)=q^{m-1}H_{0,0}(q)+\sum_{k=1}^{m-1}\epsilon^{k}q^{2m-3-k}H_{k,0}(q)+\sum_{n=1}^{2} \sum_{k=0}^{2-n}\epsilon^{n(m-2)+k} q^{m-1-k}H_{k,n}(q)\ln^n(q/\epsilon)\\+\epsilon^{m}\tilde{R}(q/\epsilon,q)
\end{multline}
where $H_{i,j}(q)$ ($j\leqslant  2$) are analytic in $q$ with sub-exponential growth for large $q$, $H_{0,2}=0$ if $m>3$, and $|{\partial^{(k+l)} \tilde{R}(u,v)}/{\partial u^k\partial v^l}|\lesssim (|\ln u|+1)^3|u|^{-k}$ for $\Re\, u\geqslant 0$, $q/\epsilon\in\mathbb{H}_1$,  $|v|<const.$, and $0\leqslant  k+l \leqslant  m+1$.

(ii) For $|q|\leqslant  3|\epsilon|$ and $\Re\, (q/\epsilon)\geqslant 0$ $H(q)$ is analytic in $q$, entire in $\epsilon$, and $|H(q)|\lesssim |q|^{m-1}$.
\end{Lemma}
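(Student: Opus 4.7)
The plan is to perform an algebraic rearrangement: substitute $\tau=q/\epsilon$ into the expansion of $F(\tau)$ from Lemma~\ref{genm3} and regroup by powers of $\epsilon$, $q$, and $\ln(q/\epsilon)$. The key identity is
$$\epsilon^{j_m}(q/\epsilon)^{j_m-l}=\epsilon^{l}\,q^{j_m-l},$$
so the decomposition $F_j=F_j^{[0]}+\tau^{j_m-m}G_j$ from \eqref{eq:estcoeffs} converts each explicit monomial $a^{[j]}_{l,k}\tau^{j_m-l}(\ln\tau)^k$ in $F_j^{[0]}$ into $a^{[j]}_{l,k}\epsilon^{l}q^{j_m-l}(\ln(q/\epsilon))^k$, while the sum of the $\tau^{j_m-m}G_j$ pieces collapses to $\epsilon^m\tilde R(q/\epsilon,q)$ with $\tilde R(u,v):=\sum_{j\ge m}v^{j_m-m}G_j(u)$.

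For part (i) I would group the monomials by $(l,k)$ and sum over $j$. The $k=0$, $l=0$ contributions yield $q^{m-1}H_{0,0}(q)$ with $H_{0,0}(q)=\sum_{j\ge m-1}a^{[j]}_{0,0}q^{(m-2)(j-m+1)}$, an entire function of $q$ by the first bound in \eqref{eq:estcoeff}. The $k=0$, $1\le l\le m-1$ terms assemble into $\epsilon^{l}q^{2m-3-l}H_{l,0}(q)$ because the smallest contributing index is $j=m$, giving $j_m-l=2m-3-l$. The logarithmic terms come from $k\in\{1,2\}$; the vanishing rule $a^{[j]}_{l,k}=0$ for $j<m+k-1$ forces the minimum contributing $j$ to be $m+k-1$ and produces the factor $\epsilon^{n(m-2)+k}q^{m-1-k}\ln^n(q/\epsilon)$ after matching exponents. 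The claim $H_{0,2}=0$ for $m>3$ is immediate at $m\ge 5$ since then $l=2(m-2)>m$ forces $a^{[j]}_{2(m-2),2}=0$, and at $m=4$ it follows by a direct check of the recurrence \eqref{eq:recA} for $A^{[j]}_{4,2}$.

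To bound the remainder I would use the second estimate in \eqref{eq:estcoeff}: for $|v|$ bounded, $|v|^{j_m-m}$ is dominated by the factorial gain $((j_m-2m+1)!)^{-m/(m-2)}$, so the series for $\tilde R(u,v)$ converges uniformly with $|\tilde R(u,v)|\lesssim(|\ln u|+1)^3$. Derivatives $\partial_v^l$ introduce only polynomial factors in $j$, absorbed by the factorial. Derivatives $\partial_u^k$ up to order $m+1$ are extracted from $G_j$ via Cauchy's formula on a disk of radius comparable to $|u|$ contained in $\HH_1$ (permissible since $\Re u\ge 0$ keeps $u$ well away from the boundary rays $\arg z=-\pi/4,5\pi/4$), producing the $|u|^{-k}$ factor.

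For part (ii), $|q|\le 3|\epsilon|$ gives $|\tau|\le 3$. Iterating \eqref{eq:reci} starting from $F_{m-1}(\tau)=\tau^{m-1}/(m-1)!$ shows $F_j(\tau)=O(\tau^{m-1})$ at $\tau=0$ for every $j\ge m-1$, so $f_j(\tau):=F_j(\tau)/\tau^{m-1}$ is analytic on $|\tau|\le 3$, and the maximum principle together with \eqref{eq:bdF0} yields $|f_j(\tau)|\lesssim 3^{j_m-m+1}/((j_m-m+1)!)^{m/(m-2)}$. Hence $H(q)=q^{m-1}\sum_{j\ge m-1}\epsilon^{j_m-m+1}f_j(q/\epsilon)$ converges absolutely on bounded $\epsilon$-sets, is analytic in $q$ and entire in $\epsilon$ by Lemma~\ref{genm3}(i), and satisfies $|H(q)|\lesssim|q|^{m-1}$. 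The principal obstacle will be the uniform derivative estimates on $\tilde R$ up to order $m+1$, which is essentially a careful bookkeeping task leveraging the factorial gain in \eqref{eq:estcoeff}.
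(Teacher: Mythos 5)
Your proposal follows essentially the same route as the paper: substitute $\tau=q/\epsilon$ into the decomposition \eqref{eq:estcoeffs}, regroup the monomials $a^{[j]}_{l,k}\epsilon^{l}q^{j_m-l}\ln^k(q/\epsilon)$ into the functions $H_{k,n}$, control convergence and sub-exponential growth via \eqref{eq:estcoeff}, obtain the $u$-derivatives of $\tilde R$ from Cauchy's formula applied to $G_j$, and deduce (ii) from \eqref{eq:ser2m} and \eqref{eq:bdF0}. Your treatment of $H_{0,2}=0$ (separating $m\geqslant 5$ from $m=4$) and of part (ii) is in fact slightly more explicit than the paper's, but the argument is the same.
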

\begin{proof}
  (i) Recall that $q=\epsilon{\tau}$, $H(q)=F(\tau)$, and $a^{[j]}_{l,k}=0$ if $j<m+k-1$ by Lemma \ref{genm3}. We thus substitute $\tau=q/\epsilon$ in \eqref{eq:estcoeffs} and obtain \eqref{eq:estH4} by collecting coefficients of powers of $\epsilon$ and $\ln(q/\epsilon)$. We define
  $$q^{m-1}H_{0,0}(q)=\frac{q^{m-1}}{(m-1)!}+\sum_{j\geqslant m}q^{j_m}a^{[j]}_{0,0}$$
  $$q^{2m-3-k}H_{k,0}(q)=\sum_{j\geqslant m}a^{[j]}_{k,0}q^{j_m-k} ~(1\leqslant k\leqslant m-1)$$
  $$ q^{m-1-k}H_{k,n}(q)= \sum_{j\geqslant m+n-1} a^{[j]}_{n(m-2)+k,n} q^{j_m-(n(m-2)+k)} ~(1\leqslant n\leqslant 2, 0\leqslant k\leqslant 2-n) $$
 $$ \tilde{R}(q/\epsilon,q)=\sum_{j_m\geqslant m}q^{j_m-m} G_j(q/\epsilon)$$
Convergence of the three series is ensured by \eqref{eq:estcoeff}.
Note that the $H_{k,n}$ are analytic in $q$ for all choices of $k$ and $n$ above.  Since  $F_j(\tau)$ is analytic in $\HH_1$ (see lemma \ref{genm3}), by \eqref{eq:estcoeff} and Cauchy's formula we have $$|G_j^{(k)}(\tau)|\lesssim \frac{ j|\tau|^{-k}(|\ln\tau|+1)^3}{((j_m-2m+1)!)^{\frac{m}{m-2}}}$$
for $\tau \in \HH$.
 Noting that for $a>0$ we have
$(j!)^a>const.^j \Gamma(aj+1)$, \eqref{eq:estcoeff} implies
  $$\sum_{j=m}^\infty\frac{|\tau|^{j_m}}{((j_m-2m+2)!)^{\frac{m}{m-2}}}\leqslant
  \sum_{j=0}^\infty\frac{|\tau|^{j}}{(j!)^{\frac{m}{m-2}}}\lesssim
  \sum_{j=0}^\infty\frac{|c_t\tau|^{j}}{\Gamma(\frac{m}{m-2}j+1)} \lesssim
   e^{\delta |\tau|}, \ \ \forall \delta>0
  $$
 Thus sub-exponential growth of $H_{i,j}$ follows. In fact it is elementary to show that the last sum is bounded by $e^{const.\tau^{\frac{m-2}{m}}}$.The rest of (i) follows from \eqref{eq:ser2m} using \eqref{eq:bdF0} and \eqref{eq:estcoeffs} to estimate the terms.

(ii) This follows from \eqref{eq:ser2m} and \eqref{eq:bdF0}.
\end{proof}

\begin{Corollary}\label{coro1} (i) For $|q|\geqslant 3|\epsilon|$ we have
\begin{equation}\label{coro1e}
\hat{s}(q)=\frac{H(q)}{q(q+2\epsilon)}=\sum_{n=0}^2 \epsilon^{n(m-2)}\(\frac{\epsilon^{m-2} \tilde{H}_{1,n+1}(\epsilon)}{q+2\epsilon}+\tilde{H}_{2,n+1}(q,\epsilon)\)\ln ^{n}(q/\epsilon)
+\epsilon^{m}\frac{\tilde{R}(q/\epsilon,q)}{q(q+2\epsilon)}
\end{equation}
where $\tilde{H}_{1,j}$ are entire, $\tilde{H}_{2,j}(q,\epsilon)$ are entire in $\epsilon$, analytic in $q$ and have sub-exponential growth for large $q$, $\tilde{H}_{2,1}(\epsilon,\epsilon)=O(\epsilon^{m-3})$, $\tilde{H}_{1,1}(0)=\ds \frac{(-2)^{m-2}}{\Gamma(m)}$, $\tilde{H}_{k,3}=0$ if $m>3$, and $\tilde{R}$ is the same as in Lemma \ref{hqexp}.

(ii) For $|q|\leqslant  3|\epsilon|$ and $\Re\, (q/\epsilon)\geqslant 0$ we have
\begin{equation}
  \label{eq:dH1}
  \hat{s}(q)=\frac{H(q)}{q(q+2\epsilon)}= \frac{\epsilon^{m-2}H_1(\epsilon)}{q+2\epsilon}+\epsilon^{m-3}H_2(q,\epsilon)
\end{equation}
where $H_i$ are analytic in $q$ and entire in $\epsilon$.
\end{Corollary}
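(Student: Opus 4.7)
The plan is to deduce Corollary \ref{coro1} directly from Lemma \ref{hqexp} by dividing each expansion of $H(q)$ by $q(q+2\epsilon)$ and performing partial fractions that separate the simple pole at $q=-2\epsilon$ from an analytic remainder.

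For part (i), I would start from the expansion \eqref{eq:estH4} and observe that every summand has the schematic form $\epsilon^\alpha q^\beta G(q)\ln^n(q/\epsilon)$ with $\beta\geqslant m-3\geqslant 0$, $n\in\{0,1,2\}$, and $G$ entire in $q$ of sub-exponential growth. For each such summand I would apply
\[
\frac{q^\beta G(q)}{q(q+2\epsilon)}=\frac{(-2\epsilon)^{\beta-1}G(-2\epsilon)}{q+2\epsilon}+\frac{q^{\beta-1}G(q)-(-2\epsilon)^{\beta-1}G(-2\epsilon)}{q+2\epsilon},
\]
in which the second fraction is entire in $q$ with sub-exponential growth, since the divided difference $[G(q)-G(-2\epsilon)]/(q+2\epsilon)=\int_0^1 G'(-2\epsilon+t(q+2\epsilon))\,dt$ preserves the growth class. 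Collecting the residue contributions from all summands carrying a fixed power $\ln^n(q/\epsilon)$ produces the polar piece $\epsilon^{n(m-2)}\cdot \epsilon^{m-2}\tilde{H}_{1,n+1}(\epsilon)/(q+2\epsilon)$, while the analytic remainders combine into $\epsilon^{n(m-2)}\tilde{H}_{2,n+1}(q,\epsilon)$; the extra $\epsilon^{m-2}$ on the polar side is produced by the factor $(-2\epsilon)^{m-2}$ arising from the leading $q^{m-1}$ contribution in each $n$-block. The remainder $\epsilon^m\tilde{R}(q/\epsilon,q)/[q(q+2\epsilon)]$ is kept as is, its stated derivative bounds transferring verbatim from Lemma \ref{hqexp}. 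The normalization $\tilde{H}_{1,1}(0)=(-2)^{m-2}/\Gamma(m)$ is read off from the leading term $q^{m-1}H_{0,0}(q)$ together with $H_{0,0}(0)=1/(m-1)!$, since all other contributions to $\tilde{H}_{1,1}$ carry strictly positive powers of $\epsilon$; and $\tilde{H}_{k,3}=0$ for $m>3$ reduces to $H_{0,2}\equiv 0$ in that range, noted in Lemma \ref{hqexp}.

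For part (ii), I would use Lemma \ref{hqexp}(ii): in the region $|q|\leqslant 3|\epsilon|$, $\Re(q/\epsilon)\geqslant 0$, $H(q)$ is analytic in $q$, entire in $\epsilon$, and $|H(q)|\lesssim|q|^{m-1}$. Since $m\geqslant 3$, $G(q):=H(q)/q$ is analytic in $q$ and entire in $\epsilon$, so $\hat{s}(q)=G(q)/(q+2\epsilon)$, and I would decompose
\[
\hat{s}(q)=\frac{G(-2\epsilon)}{q+2\epsilon}+\frac{G(q)-G(-2\epsilon)}{q+2\epsilon},
\]
evaluating $G(-2\epsilon)$ via analytic continuation through $\mathbb{H}_1$, where each $F_j$ of Lemma \ref{genm3} is analytic at $\tau=-2$. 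The convergent series \eqref{eq:ser2h} evaluated at $q=-2\epsilon$ shows $H(-2\epsilon)$ is entire in $\epsilon$ with a zero of order exactly $m-1$ at the origin, so $H_1(\epsilon):=-H(-2\epsilon)/(2\epsilon^{m-1})$ is entire with $H_1(0)=(-2)^{m-2}/(m-1)!$. The remaining piece is $\epsilon^{m-3}H_2(q,\epsilon)$, analytic in $q$ by the divided-difference cancellation at $q=-2\epsilon$ and entire in $\epsilon$ from the term-by-term structure of \eqref{eq:ser2h}. The auxiliary claim $\tilde{H}_{2,1}(\epsilon,\epsilon)=O(\epsilon^{m-3})$ is then obtained by equating the two decompositions at $q=\epsilon$: there $\ln(q/\epsilon)=0$ kills the $n=1,2$ contributions in (i), and matching the $n=0$ part against (ii), whose right-hand side is manifestly $O(\epsilon^{m-3})$ in this regime, yields the bound.

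The main technical obstacle lies in part (i): one must carefully bookkeep, across all pairs $(k,n)$ in \eqref{eq:estH4}, how the partial-fraction step redistributes powers of $\epsilon$ between the polar $\tilde{H}_{1,n+1}$ and the regular $\tilde{H}_{2,n+1}$ pieces, and verify that the sub-exponential growth of the $H_{k,n}$ survives the divided-difference construction. The latter reduces to bounding $[q^{\beta-1}G(q)-(-2\epsilon)^{\beta-1}G(-2\epsilon)]/(q+2\epsilon)$ uniformly in $q$, which can be done via Cauchy's integral formula combined with the sub-exponential estimates on the $H_{k,n}$ inherited from Lemma \ref{hqexp}.
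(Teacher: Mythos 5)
Your proposal is correct and takes essentially the same route as the paper: the paper's proof is precisely the divided-difference partial-fraction split $f(q)/(q+2\epsilon)=f(-2\epsilon)/(q+2\epsilon)+[f(q)-f(-2\epsilon)]/(q+2\epsilon)$ applied blockwise (grouped by the power of $\ln(q/\epsilon)$) to the expansion \eqref{eq:estH4}, and applied to $H(q)/q$ via Lemma \ref{hqexp}(ii) for part (ii), with the remainder $\epsilon^m\tilde{R}/[q(q+2\epsilon)]$ left untouched. The only quibble is your derivation of $\tilde{H}_{2,1}(\epsilon,\epsilon)=O(\epsilon^{m-3})$ by matching (i) against (ii) at $q=\epsilon$, a point outside the stated region $|q|\geqslant 3|\epsilon|$ of (i); the bound is obtained more directly from the explicit formula $\tilde{H}_{2,1}(q,\epsilon)=[f(q;\epsilon)-f(-2\epsilon;\epsilon)]/(q+2\epsilon)$ with $f(q;\epsilon)=q^{m-2}H_{0,0}(q)+\sum_{k=1}^{m-1}\epsilon^{k}q^{2m-4-k}H_{k,0}(q)$, since $f$ is $O(\epsilon^{m-2})$ at both $q=\epsilon$ and $q=-2\epsilon$ while the denominator is $3\epsilon$.
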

\begin{proof}
Note that for a  function $f$ analytic in $q$ and entire in $\epsilon$ we have
\begin{equation}\label{fq1}
\frac{f(q;\epsilon)}{q+2\epsilon}=\frac{f(-2\epsilon)}{q+2\epsilon}+\frac{f(q)-f(-2\epsilon)}{q+2\epsilon}:=\frac{\tilde{f}_1(\epsilon)}{q+2\epsilon}+\tilde{f}_2(q;\epsilon)
\end{equation}
where $\tilde{f}_2$ is analytic in $q$ and entire in $\epsilon$. Using now \eqref{eq:estH4}, we take
$$qf(q;\epsilon)=q^{m-1}H_{0,0}(q)+\sum_{k=1}^{m-1}\epsilon^{k}q^{2m-3-k}H_{k,0}(q)$$
in \eqref{fq1} and define $\epsilon^{(2-k)(m-2)}\tilde{H}_{k,1}=\tilde{f}_k ~ (k=1,2)$. For $k,n=1,2$ we take
$$qf(q;\epsilon)=\sum_{k=0}^{2-n}\epsilon^{k} q^{m-1-k}H_{k,n}(q)$$
in \eqref{fq1} and define $\epsilon^{(2-k+n)(m-2)}\tilde{H}_{k,n+1}=\tilde{f}_k ~ (k,n=1,2)$.
Sub-exponential growth of $\tilde{H}_{2,j}$ follows immediately from the sub-exponential growth of $H_{2,j}(q)$. In addition   $\tilde{H}_{1,1}(0)=H_{0,0}(0)=-\frac{1}{2}F_{m-1}(-2)=\ds \frac{(-2)^{m-2}}{\Gamma(m)}$ by Lemma \ref{genm3} and the proof of Lemma \ref{hqexp}.

Similarly to obtain \eqref{eq:dH1} we use \eqref{fq1} for $f(q)=H(q)$ and apply Lemma \ref{hqexp} (ii).
\end{proof}

\subsection{Asymptotic expansion of $s(x;\epsilon)$ for small $\epsilon$}\label{smallepss}
For small $\epsilon$, the  function $s(x;\epsilon)$ (cf. \eqref{eq:defs}) has an asymptotic
expansion in powers of $\epsilon$ and $\epsilon^{m-2}\ln\epsilon$. To obtain the time decay of $\psi$, only a few terms of this expansion are needed, and they are obtained in Lemma \ref{coro} below.

\begin{Lemma}
\label{coro}
(i) Let $\delta>0$ be arbitrarily small but fixed.
We have for $x\geqslant x_+$, $\epsilon \in \HH$ and $|\epsilon|\leqslant 1/x$
\begin{equation}\label{sm4}
s(x;\epsilon)=h_{1}(x)\epsilon^{m-2}\ln\epsilon+h_{2}(x)\epsilon^{m-1}\ln\epsilon+h_{3}(x)\epsilon^{2m-4}(\ln\epsilon)^2
+Q(x;\epsilon)
\end{equation}
where the smooth functions $h_{j}$ satisfy \begin{equation}\label{sm4a}
 h_1(x)\sim- \frac{(-2)^{m-2}}{\Gamma(m)},h_2(x)\sim \frac{(-2)^{m-1}x}{\Gamma(m)},h_3(x)\sim a_0~\text{as}~ x\to\infty\end{equation}
 where $a_0$ is a constant and for $m>3$ one can take $h_{3}(x)=0$. Furthermore

\begin{equation}\label{unibd}
\ds \sup_{0\leqslant  k\leqslant  m-1}\left|x^{m-2-k} \frac{\partial^k Q(x;\epsilon)}{\partial \epsilon^k}\right|<\infty ; \ \ \sup_{m\leqslant  k\leqslant  m+1}\left|\epsilon^{k-m+\delta}x^{-2} \frac{\partial^k Q(x;\epsilon)}{\partial \epsilon^k}\right|<\infty
\end{equation}

The asymptotic formula \eqref{sm4} is twice differentiable in $x$, i.e. \eqref{unibd} holds with $Q$ replaced by $xQ'$ or $x^2Q''$.
\end{Lemma}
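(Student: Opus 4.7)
The plan is to recover $s(x;\epsilon)$ as the Laplace transform
$s(x;\epsilon)=\int_0^\infty e^{-xq}\hat{s}(q;\epsilon)\,dq$
in the direction $q\mapsto x$ and to read off the small-$\epsilon$ behaviour directly from the decomposition of $\hat{s}$ supplied by Corollary \ref{coro1}. Convergence for $x\geqslant x_+$ is ensured by the sub-exponential growth bound \eqref{eq:eqC2}. Splitting the $q$-integral as $\int_0^{3|\epsilon|}+\int_{3|\epsilon|}^\infty$ and applying \eqref{eq:dH1} on the first piece and \eqref{coro1e} on the second (the pole at $q=-2\epsilon$ is common to both and the $q$-analytic pieces continue across the interface), the problem reduces to three families of explicit one-dimensional integrals: the pole pieces $\epsilon^{(n+1)(m-2)}\tilde H_{1,n+1}(\epsilon)I_n(x;\epsilon)$ with
$$I_n(x;\epsilon):=\int_0^\infty\frac{e^{-xq}\,\ln^n(q/\epsilon)}{q+2\epsilon}\,dq,\qquad n=0,1,2;$$
the $q$-analytic pieces $\epsilon^{n(m-2)}\int e^{-xq}\tilde H_{2,n+1}(q,\epsilon)\ln^n(q/\epsilon)\,dq$; and the remainder $\epsilon^m\int e^{-xq}\tilde R(q/\epsilon,q)\,dq/[q(q+2\epsilon)]$.

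In the regime $|\epsilon|x\leqslant 1$ the key integral $I_0=e^{2\epsilon x}E_1(2\epsilon x)$ expands as
$$I_0=-\gamma-\ln(2\epsilon x)+2\epsilon x\bigl(1-\gamma-\ln(2\epsilon x)\bigr)+O\bigl((\epsilon x)^2\ln\epsilon\bigr),$$
and a parallel computation (splitting the $q$-integral at $q=2\epsilon$ and treating $\int \ln q/(q+2\epsilon)$ via $\int \ln u /u$ near $u=2\epsilon x$) yields $I_1=\tfrac12(\ln\epsilon)^2+O(\ln\epsilon\cdot\ln x)$ and $I_2=O((\ln\epsilon)^3)$. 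Combining with $\tilde H_{1,1}(0)=(-2)^{m-2}/\Gamma(m)$ from Corollary \ref{coro1} and matching powers of $\epsilon$ and $\ln\epsilon$, one reads off
$$h_1(x)\sim -\tilde H_{1,1}(0)=-\frac{(-2)^{m-2}}{\Gamma(m)},\qquad h_2(x)\sim 2x\,\tilde H_{1,1}(0)=\frac{(-2)^{m-1}x}{\Gamma(m)}.$$
The $(\ln\epsilon)^2$-coefficient at order $\epsilon^{2m-4}$ is fed only by the $I_1$-piece via its $\tfrac12(\ln\epsilon)^2$ leading term, giving $h_3(x)\to \tilde H_{1,2}(0)/2=:a_0$ as $x\to\infty$. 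For $m>3$ we have $2m-4>m-1$, so this contribution is strictly subleading to $\epsilon^{m-1}\ln\epsilon$ and can be absorbed into $Q$; this is the sense in which one may take $h_3\equiv 0$ for $m>3$.

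The remaining pieces constitute $Q$. The $q$-analytic contributions $\tilde H_{2,n+1}(q,\epsilon)$ produce, after Laplace transform, functions entire in $\epsilon$ (modulo the explicit $\ln^n\epsilon$ from the prefactor), whose $x$-decay is inherited from the sub-exponential decay of $\tilde H_{2,n+1}(q,\epsilon)$ against $e^{-xq}$. The remainder $\epsilon^m\tilde R$ is controlled by the derivative estimate $|\partial_u^k\partial_v^l\tilde R|\lesssim(|\ln u|+1)^3|u|^{-k}$ from Lemma \ref{hqexp}, giving a contribution bounded by $\epsilon^m(|\ln\epsilon|)^3 x^2$. For the derivative bounds \eqref{unibd}, $\epsilon$-differentiation under the integral raises the pole order at $q=-2\epsilon$: the $k$-th derivative involves integrals of the form $\int_0^\infty e^{-xq}(q+2\epsilon)^{-k-1}\,dq$, which for $k\leqslant m-2$ are dominated by $x^{k}$ up to logarithmic factors, giving $|x^{m-2-k}\partial_\epsilon^k Q|\lesssim 1$; for $k=m,m+1$ the pole is too singular and genuinely produces the factor $\epsilon^{m-k-\delta}$, with the $x^2$ and the $\delta$ absorbing the $|\ln\epsilon|^3$ overhead from $\tilde R$ and the Cauchy derivative estimate. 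Twice differentiability in $x$ is obtained by inserting factors of $q$ and $q^2$ in the integrand. The main obstacle will be the bookkeeping in this last step: one must verify that all of the $\ln\epsilon$ and $\ln x$ factors coming from $E_1$, from Taylor expansion of $\tilde H_{1,n+1}(\epsilon)$, and from the $\ln^n(q/\epsilon)$ prefactors recombine into exactly the four terms of \eqref{sm4} with no residual logarithmic tail in $Q$, while simultaneously the $\epsilon$-derivative bounds propagate uniformly in $x$ through order $m+1$ with the stated singular exponent $\epsilon^{m-k-\delta}$.
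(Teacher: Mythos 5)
Your overall route is the same as the paper's: represent $s$ as the $q\mapsto x$ Laplace transform of $\hat s$, split the $q$-integral at $|q|=3|\epsilon|$, feed in the decomposition of Corollary \ref{coro1}, and extract the singular terms from the pole integrals. (Your $I_n$ are exactly the integrals the paper reaches after substituting $q=\epsilon\tau$; evaluating $I_0$ as $e^{2\epsilon x}E_1(2\epsilon x)$ is equivalent to the paper's Lemma \ref{nm0}.) The identification of the leading coefficients in \eqref{sm4a} comes out right, apart from a sign slip: the $\epsilon^{m-1}\ln\epsilon$ coefficient produced by $I_0$ is $-2x\,\tilde H_{1,1}(0)=(-2)^{m-1}x/\Gamma(m)$, not $+2x\,\tilde H_{1,1}(0)$ (your final expression is correct, your intermediate one is not). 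Also, for $m=3$ the $\epsilon^{2m-4}(\ln\epsilon)^2$ coefficient receives a contribution from the $\tilde H_{2,3}$ piece as well as from $I_1$; it happens to be $o(1)$ in $x$, so your stated limit $a_0$ survives, but ``fed only by the $I_1$-piece'' is not literally true.

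The genuine gap is in the derivative bounds \eqref{unibd}, which are the substantive content of the lemma (they are what later yields $t^{-m}$ by $m$-fold integration by parts in the Bromwich integral). Your mechanism --- that the $k$-th derivative involves $\int_0^\infty e^{-xq}(q+2\epsilon)^{-k-1}dq$, ``dominated by $x^k$ up to logarithmic factors'' for $k\leqslant m-2$ --- fails as stated: for $|\epsilon|x\ll1$ that integral is of order $\epsilon^{-k}$, far larger than $x^k$, and even the favorable combination $x^{m-2-k}\epsilon^{m-2-k}|\ln\epsilon|$ is unbounded on $|\epsilon|\leqslant 1/x$ (take $\epsilon=1/x$). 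The logarithms cannot be absorbed into $Q$; they are precisely what must be split off into the $h_j$ terms, and the estimate on $Q$ only works \emph{after} that subtraction, by expanding $I_n$ in $\epsilon x$ with a remainder whose $k$-th $\epsilon$-derivative is $\lesssim x^k$ uniformly --- this is the content of \eqref{unibd1} in Lemma \ref{nm0}, which your sketch has no substitute for. Your case analysis also omits $k=m-1$ entirely (you treat $k\leqslant m-2$ and $k=m,m+1$), and $k=m-1$ is the borderline case where the naive power count gives $(\epsilon x)^{-1}$ and the subtraction is indispensable. Finally, the uniform-in-$x$ control of the $\tilde R$ contribution for $k$ up to $m+1$ needs the mixed $(u,v)$-derivative bounds of Lemma \ref{hqexp}, not only the $u$-derivative bound you quote. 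None of this is unfixable --- the paper carries out exactly this bookkeeping --- but as written the key estimates are asserted rather than proved, and the one justification you do offer is incorrect.
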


\begin{proof}
We write
\begin{equation}\label{threeint}
s(x;\epsilon)=\(\int_0^{3\epsilon} +\int_{3\epsilon}^{1} +\int_1^{\infty}\) \frac{H(q)e^{-q x}}{q(q+2\epsilon)}  dq
\end{equation}
The first and last terms are estimated easily: the last integral is manifestly analytic in $\epsilon$ and decays exponentially in $x$, so \eqref{unibd} is obvious. By Corollary \ref{coro1} the first integral is equal to
$$\epsilon^{m-2}\int_{0}^3\(\frac{H_1(\epsilon)}{\tau+2}+H_2(\epsilon\tau,\epsilon)\)e^{-\epsilon\tau x} d\tau$$
Thus it is analytic in $\epsilon$ and satisfies the estimates in \eqref{unibd} by direct differentiation in $\epsilon$, noting that $|\epsilon^{m-2}|\lesssim x^{2-m}$.

To estimate the middle integral in \eqref{threeint} we use \eqref{coro1e} to write
$$\int_{3\epsilon}^{1} \frac{H(q)e^{-q x}}{q(q+2\epsilon)}  dq=S_1(x,\epsilon)+S_2(x,\epsilon)+S_3(x,\epsilon)$$
where
\begin{equation}\label{defsa}
S_1(x,\epsilon)=\sum_{k=1}^3\epsilon^{k(m-2)}\tilde{H}_{1,k}(\epsilon)\int_{3\epsilon}^1\frac{(\ln (q/\epsilon))^{k-1}e^{-q x}}{q+2\epsilon}dq
\end{equation}
\begin{equation}\label{defsb}
S_2(x,\epsilon)=\sum_{k=0}^2\epsilon^{k(m-2)}\int_{3\epsilon}^1(\ln (q/\epsilon))^{k}\tilde{H}_{2,k+1}(q,\epsilon)e^{-q x}dq
\end{equation}
\begin{equation}\label{defsc}
S_3(x,\epsilon)=\epsilon^{m} \int_{3\epsilon}^1 \frac{\tilde{R}(q/\epsilon,q)e^{-q x}}{q(q+2\epsilon)}dq
\end{equation}
Now the fact that the middle integral in \eqref{threeint} has an expansion of the form \eqref{sm4} follows from the lemma below:
\begin{Lemma}\label{lem5}
The terms $S_1(x,\epsilon)$ and $S_2(x,\epsilon)$ have expansions of the form \eqref{sm4}, and $S_3(x,\epsilon)$ satisfies the estimates in \eqref{unibd}. More precisely we have
\begin{equation}\label{newad1}
S_k(x;\epsilon)=h_{k,1}(x)\epsilon^{m-2}\ln\epsilon+h_{k,2}(x)\epsilon^{m-1}\ln\epsilon+h_{k,3}(x)\epsilon^{2m-4}(\ln\epsilon)^2
+R_{k,0}(x;\epsilon)
\end{equation}
for $k=1,2$,
where $h_{k,j}$ have the large $x$ asymptotics
$$h_{1,1}\sim -\frac{(-2)^{m-2}}{\Gamma(m)}(1+o(1)); \ h_{1,2}\sim-\frac{(-2)^{m-2}(2x+o(x))}{\Gamma(m)}
; \ h_{1,3}\sim a_0+o(1)$$
$$h_{2,1}=o(1); \ h_{2,2}=O(1)
; \ h_{2,3}=o(1)$$
where $a_0$ is a constant and $R_{k,0}$ satisfy the estimates in \eqref{unibd}.
\end{Lemma}
\begin{proof}
We first show the result for $S_1$ using \eqref{defsa}. Since $\tilde{H}_{1,k}$ are analytic in $\epsilon$, we only need to analyze the integrals
\begin{equation}\label{taude0}
\int_{3\epsilon}^1\frac{\ln ^l(q/\epsilon)e^{-q x}}{q+2\epsilon}dq=-\int_{1}^{\infty}\frac{\ln ^l(q/\epsilon)e^{-q x}}{q+2\epsilon}dq+\int_{3}^{\infty}\frac{\ln ^l(\tau)e^{-\epsilon\tau x}}{\tau+2}d\tau
\end{equation}
where $0\leqslant l\leqslant 2$. The first integral on the right hand side is a polynomial in $\ln \epsilon$ times a function analytic in $\epsilon$ with exponential decay in $x$, and the last one is equal to
\begin{equation}\label{taude}
\int_{3}^{\infty}\ln ^l\tau(\tau^{-1}-2\tau^{-2})e^{-\epsilon\tau x}d\tau +\int_{3}^{\infty}\frac{4\ln ^l\tau}{\tau^2(\tau+2)}e^{-\epsilon\tau x}d\tau
\end{equation}
To analyze the first term in \eqref{taude} we need the following elementary result:
\begin{Lemma}\label{nm0} Assume $l \geqslant 0$, $x\geqslant x_+$ and $|\epsilon x|\leqslant 1$. For $n \geqslant 0$ we have
\begin{equation}\label{inttau}
\int_3^{\infty}e^{-\epsilon \tau x} \tau^n(\ln \tau)^ld\tau=
\frac{1}{(\epsilon x)^{n+1}}\sum_{q=0}^{l}c_{q}^{(n;l)}\ln ^q (\epsilon x) +R_{a,n}(x,\epsilon)
\end{equation}
where $c_{q}^{(n;l)}$ are constants with $c_{1}^{(0;1)}=-1$, and $R_{a,n}$ satisfies
\begin{equation}\label{unibd1}
\left|\frac{\partial^k R_{a,n}(x,\epsilon)}{\partial \epsilon^k}\right|\lesssim x^k \ \ (k\geqslant 0)
\end{equation}
In addition, for $n\leqslant  -1$ we have
\begin{equation}\label{nm1}
\int_3^{\infty}e^{-\epsilon \tau x} \tau^{n}(\ln \tau)^ld\tau=(\epsilon x)^{-1-n}\sum_{q=0}^{l+1}c_{q}^{(n;l)}  \ln^q(\epsilon x) +R_{a,n}(x,\epsilon)
\end{equation}
where $c_{1}^{(-1;0)}=-1$, $c_{1}^{(-2;0)}=1$, and $R_{a,n}$ satisfies \eqref{unibd1}.
\end{Lemma}
The proof of this Lemma is given in the Appendix.

Expansion of the first term in \eqref{taude} follows directly from Lemma \ref{nm0}. The last term in \eqref{taude}  satisfies
\begin{multline}\label{taude1}
\left|\frac{d^k}{d \epsilon^k}\int_{3}^{\infty}\frac{4\ln
    ^l\tau}{\tau^2(\tau+2)}e^{-\epsilon\tau x}d\tau \right|
\lesssim \left|x^k\int_{3}^{\infty}\tau^{-3+k}\ln ^l\tau e^{-\epsilon\tau x}d\tau\right| \\
\lesssim
\left\{
  \begin{array}{ll}
 x^k, & \hbox{$0\leqslant  k\leqslant  1$;} \\
 x^2|\epsilon|^{2-k}(1+ |\ln ^{l+1}(\epsilon x)|), & \hbox{$2\leqslant  k\leqslant  m+1$.}
  \end{array}
\right.
\end{multline}
where the second integral in \eqref{taude1} is estimated using Lemma \ref{nm0}. Thus
$$\epsilon^{(l+1)(m-2)+k}\int_{3}^{\infty}\frac{4\ln ^l\tau}{\tau^2(\tau+2)}e^{-\epsilon\tau x}d\tau$$
satisfies \eqref{unibd} for all $k\geqslant 0$ and $l\geqslant 0$ by direct differentiation.

Thus combining \eqref{taude0} and \eqref{taude} using Lemma \ref{nm0} we see that
\begin{multline}\label{gene1}\epsilon^{(l+1)(m-2)}\int_{3\epsilon}^1\frac{\ln ^l(q/\epsilon)e^{-q x}}{q+2\epsilon}dq= \epsilon^{(l+1)(m-2)}\ln ^l\epsilon\int_{1}^{\infty}\frac{e^{-q x}}{q+2\epsilon}dq \\
+\sum_{n=-2}^{-1}(-2)^{-1-n}\epsilon^{(l+1)(m-2)-1-n}x^{-1-n}\sum_{q=0}^{l+1}c_{q}^{(n;l)}  \ln^q(\epsilon x)+
R_{0,0}(x,\epsilon)\end{multline}
where $R_{0,0}$ satisfies \eqref{unibd}.

Letting $l=0,1,2$ in \eqref{gene1} we have
\begin{equation}\label{esto1}
\epsilon^{m-2}\tilde{H}_{1,k}(\epsilon)\int_{3\epsilon}^1\frac{e^{-q x}}{q+2\epsilon}dq=h_{a,1}(x)\epsilon^{m-2}\ln\epsilon+h_{a,2}(x)\epsilon^{m-1}\ln\epsilon+R_{0,1}(x,\epsilon)
\end{equation}
where $h_{a,1}(x)=-1+o(1)$ and $h_{a,2}(x)=-2x(1+o(1))$ for large $x$,
\begin{equation}\label{esto2}
\epsilon^{2m-4}\tilde{H}_{2,k}(\epsilon)\int_{3\epsilon}^1\frac{\ln (q/\epsilon)e^{-q x}}{q+2\epsilon}dq=c_{2}^{(-1;1)}\epsilon^{2m-4}\ln^2\epsilon
+h_{b,1}(x)\epsilon^{2m-4}\ln\epsilon+R_{0,2}(x,\epsilon)
\end{equation}
where $h_{b,1}(x)=O(1)$ for large $x$, and
\begin{equation}\label{esto3}
\epsilon^{3m-6}\tilde{H}_{3,k}(\epsilon)\int_{3\epsilon}^1\frac{\ln^2 (q/\epsilon)e^{-q x}}{q+2\epsilon}dq=R_{0,3}(x,\epsilon)
\end{equation}
where $R_{0,i} ~ (0\leqslant i\leqslant 3)$ satisfies \eqref{unibd}. Thus \eqref{newad1} follows from \eqref{esto1}, \eqref{esto2}, and \eqref{esto3}. Note that $\tilde{H}_{1,1}(0)=\ds \frac{(-2)^{m-2}}{\Gamma(m)}$ by Corollary \ref{coro1}.

To show \eqref{newad1} for $S_2$ we write $\ln(q/\epsilon)=\ln q-\ln \epsilon$ in \eqref{defsb}. By Corollary \ref{coro1}
$$ \int_{2\epsilon}^{1} \tilde{H}_{2,n}(q,\epsilon)e^{-q x}dq$$
is entire in $\epsilon$ with its $k$-th derivative in $\epsilon$ bounded by $const.x^{k-1}$. Thus the term containing $\tilde{H}_{2,1}$ satisfies \eqref{unibd} and the terms containing $\ln \epsilon\tilde{H}_{2,2}$ and $\ln^2 \epsilon\tilde{H}_{2,3}$ satisfy \eqref{newad1}.

The term with $\ln q \tilde{H}_{2,2}$ is analyzed using integration by
parts:
 \begin{multline}\label{parts}
 \epsilon^{m-2} \int_{3\epsilon}^{1} \ln q \tilde{H}_{2,2}(q,\epsilon)e^{-q x}dq=
\epsilon^{m-2}\tilde{H}_{2,2}(q,\epsilon)e^{-q x}\(q\ln q-q\)\bigg|_{3\epsilon}^{1}
\\-\epsilon^{m-2}\int_{3\epsilon}^{1}\(q\ln q-q\)\frac{\partial(\tilde{H}_{2,2}(q,\epsilon)e^{-q x})}{\partial q }dq
 \end{multline}
where the last term satisfies \eqref{unibd} by direct
calculation and counting powers of $\ve$.

The term containing $\ln q\tilde{H}_{2,3}(q,\epsilon)$ can be similarly analyzed using integration by parts, which gives
$$\epsilon^{2m-4}\ln \epsilon \int_{3\epsilon}^{1} \ln q \tilde{H}_{2,3}(q,\epsilon)e^{-q x}dq=-\epsilon^{2m-4}\ln \epsilon\tilde{H}_{2,3}(1,\epsilon)e^{-x}+R_s(x,\epsilon)$$
where $R_s$ satisfies \eqref{unibd}.
The term containing $(\ln q)^2\tilde{H}_{2,3}(q,\epsilon)$ satisfies \eqref{unibd} by direct
calculation and counting powers of $\ve$.

Finally we show that $S_3(x,\epsilon)$ satisfies the estimate \eqref{unibd}. Denoting
$\partial_{(i,j)}\tilde{R}(u,v)=\partial^{i+j} \tilde{R}(u,v)/(\partial u)^i (\partial v)^j$, we have by Lemma \ref{hqexp}
 \begin{multline} \left|\frac{\partial^k }{\partial\epsilon ^k}\(\epsilon \int_{3\epsilon}^1 \frac{\tilde{R}(q/\epsilon,q)e^{-q x}}{q(q+2\epsilon)}dq\)\right|= \left|\frac{\partial^k }{\partial\epsilon ^k}\int_{3}^{1/\epsilon} \frac{\tilde{R}(\tau,\epsilon\tau)e^{-\epsilon\tau x}}{\tau(\tau+2)}d\tau\right|\\
 \lesssim \sum_{i+j=k} \left|\int_{3}^{1/\epsilon} \frac{\partial_{(0,i)}\tilde{R}(\tau,\epsilon\tau)\tau^{i-1}x^je^{-\epsilon \tau x}}{\tau+2}d\tau\right|
+\sum_{i+j=k-1}\left|\frac{\partial_{(i,0)}\tilde{R}(1/\epsilon,1)e^{- x}}{\epsilon^{2i+j}}\right|
\\ \lesssim (|\ln\epsilon|+1)^4\(|\epsilon^{1-k}|e^{- x}+\sum_{i+j=k}|\epsilon^{1-i}x^j|\)
 \end{multline}
for $0\leqslant  k \leqslant  m+1$. Thus
$$\left|\frac{\partial^n S_3(x,\epsilon)}{\partial\epsilon ^n} \right|
 \lesssim \sum_{j+k=n}\left|\epsilon^{m-1-j}\frac{\partial^k }{\partial\epsilon ^k}\(\epsilon \int_{3\epsilon}^1 \frac{\tilde{R}(q/\epsilon,q)e^{-q x}}{q(q+2\epsilon)}dq\)\right| \lesssim |(|\ln\epsilon|+1)^4\epsilon^{m-n}|$$

\end{proof}
Finally $s'$ and $s''$ are similarly analyzed, finishing the proof of \ref{coro}.
\end{proof}

\subsection{Detailed behavior of the exponentially decaying functions  $y_\pm$}\label{ypm}
Since $\hat{\psi}$ solves \eqref{psiwave}, its singularity structure depends on the singularity structure of the two solutions of the associated homogeneous equation \eqref{eq:hom1}, which are of the form $y_\pm(x)=e^{\mp\epsilon x}(1+s_\pm(x))$ where $s_\pm(x)=o(1)$ for $x\to \pm\infty$. By symmetry it is sufficient to study $y_+(x)$, where $s_+$ is exactly the function $s$ in Lemma \ref{coro}.

Proposition \ref{jostf} below shows that $y_+(x)$ has an expansion involving the special functions
 $$\Phi_1(x)=(m-2)^{1/(m-2)}\Gamma\(\frac{m-1}{m-2}\)\sqrt{x}I_{\frac{1}{m-2}}\(\frac{2x^{1-m/2}}{m-2}\)$$
$$\Phi_2(x)=\frac{(m-2)^{-1/(m-2)}}{\Gamma\(\frac{1}{m-2}\)}\sqrt{x}K_{\frac{1}{m-2}}\(\frac{2x^{1-m/2}}{m-2}\)$$
where $I_n$ and $K_n$ denote the modified Bessel functions of the first
and second kind respectively.

\begin{Proposition} \label{jostf}

For arbitrarily small $\delta>0$, $x\geqslant x_+$ and $|\epsilon|\leqslant  1/x$ we have
\begin{equation}
\label{cancel0}
y_+(x;\epsilon)=r(\epsilon) \bigg(
\Phi_1(x)+B_1(x)\epsilon^{m-1}\ln\epsilon+\hat{f}_a(x,\epsilon)\bigg)
\end{equation}
where \begin{equation}\label{defr}
r(\epsilon):=\left(1+a_1\epsilon^{m-2}\ln\epsilon+a_0\epsilon^{2m-4}(\ln\epsilon)^2\right)
\end{equation}
where $a_{0,1}$ are constants, $B_1(x)$ is a linear combination of $\Phi_{1,2}(x)$ with $B_{1}(x)\sim const.x$, $\hat{f}_{a}(x,\epsilon)\lesssim |\epsilon x|$, and
\begin{equation}\label{fhat}
\sup_{1\leqslant  k\leqslant  m-1}\left|x^{-k}\frac{\partial^k \hat{f}_a(x;\epsilon)}{\partial \epsilon^k}\right| <\infty ; \\ \sup_{m\leqslant  k\leqslant  m+1}\left|\epsilon^{k-1-m+\delta} x^{-m-1}\frac{\partial^k \hat{f}_a(x;\epsilon)}{\partial \epsilon^k}\right| <\infty
\end{equation}
Furthermore, the expansion \eqref{cancel0} is differentiable in $x$, i.e. \eqref{fhat} holds with $\hat{f}_a(x;\epsilon)$ replaced by $x\hat{f}_a'(x;\epsilon)$.
\end{Proposition}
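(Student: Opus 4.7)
The starting point is the identity $y_+(x;\epsilon)=e^{-\epsilon x}(1+s(x;\epsilon))$ combined with the refined expansion \eqref{sm4} of $s$ from Lemma \ref{coro}. I first identify the $\epsilon=0$ value: each explicit $h_j\epsilon^{\alpha}(\ln\epsilon)^{\beta}$ term vanishes at $\epsilon=0$, so $1+s(x;0)=1+Q(x;0)$ solves $y''=V(x)y$ on $x\ge x_+$ and tends to $1$ as $x\to\infty$. A short check using the small-argument asymptotics $I_\nu(z)\sim(z/2)^\nu/\Gamma(\nu+1)$, together with the substitution $y=\sqrt{x}\,u(\tfrac{2x^{1-m/2}}{m-2})$ that reduces $y''=v_1x^{-m}y$ to the modified Bessel equation of order $1/(m-2)$, shows that $\Phi_1$ has the same properties. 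By uniqueness, $y_+(x;0)=\Phi_1(x)$.

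Using the infinity limits in \eqref{sm4a}, I set $a_1=-(-2)^{m-2}/\Gamma(m)$ and take $a_0$ to be the constant limit of $h_3$ (so $a_0=0$ when $m>3$), defining $r(\epsilon)$ as in \eqref{defr}. Since $r(\epsilon)=1+O(\epsilon^{m-2}\ln\epsilon)$ it is invertible for small $\epsilon$, and I write $y_+/r(\epsilon)=\Phi_1(x)+[\text{rest}]$. Collecting the $\epsilon^{m-1}\ln\epsilon$ coefficient in $[\text{rest}]$, one finds two contributions: the direct term $h_2(x)$ from $s$ and the cross term $-xh_1(x)$ produced by the $-\epsilon x$ factor of the Taylor expansion of $e^{-\epsilon x}$ hitting the $h_1\epsilon^{m-2}\ln\epsilon$ piece. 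By \eqref{sm4a} these combine to $B_1(x)\sim\mathrm{const}\cdot x$ at infinity. That $B_1$ is a linear combination of $\Phi_{1,2}$ follows by substituting the ansatz \eqref{cancel0} into $y''=(V+\epsilon^2)y$ and matching coefficients of $\epsilon^{m-1}\ln\epsilon$: the resulting equation $B_1''=V(x)B_1$ on $x\ge x_+$ forces $B_1\in\mathrm{span}\{\Phi_1,\Phi_2\}$, with the linear growth pinning down a nonzero $\Phi_2$ component.

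All remaining terms are packaged into $\hat f_a$: (i) the piece $\Phi_1(x)(e^{-\epsilon x}-1)/r(\epsilon)$ from factoring $e^{-\epsilon x}$ off $\Phi_1$; (ii) products of the deviations $h_j(x)-h_j(\infty)$ with the corresponding singular $\epsilon$-powers; and (iii) the remainder $e^{-\epsilon x}Q(x;\epsilon)/r(\epsilon)$. The bound $|\hat f_a|\lesssim|\epsilon x|$ follows because in the regime $|\epsilon x|\le1$ each ingredient is at most linear in $\epsilon x$: item (i) uses $|e^{-\epsilon x}-1|\lesssim|\epsilon x|$; item (ii) uses that the prefactor deviations vanish at infinity while the $\epsilon^{m-2}|\ln\epsilon|$ factors are controlled via $|\epsilon x|\le1$; and item (iii) follows directly from \eqref{unibd}. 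The derivative bounds \eqref{fhat} come from differentiating the three groups in $\epsilon$: for $k\le m-1$ each $\partial_\epsilon$ acting on $e^{-\epsilon x}$ costs a factor $x$, which matches the $x^{-k}$ normalization, while for $k=m,m+1$ the singular $\epsilon$-derivatives of $\epsilon^{m-2}\ln\epsilon$ and of the $Q$-remainder produce the negative $\epsilon$-powers that the prefactor $\epsilon^{k-1-m+\delta}$ absorbs, the $\delta$ eating the logarithm. Differentiability in $x$ is inherited from the last sentence of Lemma \ref{coro}.

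The main technical obstacle is the last step: verifying \eqref{fhat} uniformly in $x$ and $\epsilon$, particularly at the borderline orders $k=m,m+1$. Here one must carefully trade positive powers of $x$ against the constraint $|\epsilon x|\le1$ and match each contribution of type (i)--(iii) against the specific estimates in \eqref{unibd} rather than invoke them abstractly; once this bookkeeping is in place, \eqref{cancel0} follows by construction.
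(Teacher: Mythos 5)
Your proposal is correct and follows essentially the same route as the paper: expand $y_+=e^{-\epsilon x}(1+s)$ via Lemma \ref{coro}, substitute into the homogeneous equation to force each coefficient of a singular $\epsilon$-power to solve $f''=Vf$ (hence lie in $\mathrm{span}\{\Phi_1,\Phi_2\}$, pinned down by the large-$x$ asymptotics \eqref{sm4a}), divide by $r(\epsilon)$, and control $\hat f_a$ through the estimates \eqref{unibd}. The only cosmetic differences are that you identify $y_+(x;0)=\Phi_1$ by an explicit uniqueness/Bessel argument and make the constant $a_1$ explicit, whereas the paper phrases the same facts through the asymptotics of the coefficients $h_j$.
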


\begin{proof}
It follows from Lemma \ref{coro} that $y_+=e^{-\epsilon x}(1+s)$ has the following expansion
\begin{multline}\label{h04}
y_+(x;\epsilon)=h_0(x)+h_{1}(x)\epsilon+h_{2}(x)\epsilon^{m-2}\ln\epsilon+h_{3}(x)\epsilon^{m-1}\ln\epsilon
+h_{4}(x)\epsilon^{2m-4}(\ln\epsilon)^2+\tilde{H}(x;\epsilon)
\end{multline}
where $h_0\sim 1$, $h_{2}\sim a_1$, $h_{4}\sim a_0$ for large $x$, and $\tilde{H}(x;\epsilon)$ satisfies \eqref{fhat} with $|\tilde{H}(x;\epsilon)|\lesssim |\epsilon^2 x^2|$. Plugging this expansion back into \eqref{eq:hom1} (recall that the asymptotics \eqref{h04} is differentiable by Lemma \ref{coro}) we have
\begin{multline}
0=\left(h_0''(x)-\frac{1}{x^m}h_0(x)\right)+\left(h_{1}''(x)-\frac{1}{x^m}h_{1}(x)\right)\epsilon
+\left(h_{2}''(x)-\frac{1}{x^m}h_{2}(x)\right)\epsilon^{m-2}\ln\epsilon+...\\
\end{multline}
Standard asymptotic arguments for $\epsilon\to 0$ show that all the coefficients above must be zero, and thus
$h_{i}$ satisfies the equation
\begin{equation}\label{eqf0}
f''(x)=x^{-m}f(x)
\end{equation}
The solutions to \eqref{eqf0} are exactly $\Phi_{1,2}$. Note that $\Phi_1(x)=1+o(1)$ and $\Phi_2(x)=x(1+o(1))$ for large $x$. Therefore $h_{0}(x)=\Phi_1(x)$, and $h_{2}(x)=a_1\Phi_1(x)$, $h_{4}(x)=a_0\Phi_1(x)$ since $h_0\sim 1$, $h_{2}\sim a_1$, and $h_{4}\sim a_0$.

Thus dividing \eqref{h04} by \eqref{defr} we obtain \eqref{cancel0} for some $B_1$. Substituting \eqref{cancel0} into
\eqref{eq:hom1} and examining the coefficients of $\epsilon^{m-1}\ln
\epsilon$ we see that $B_1$ satisfies
\eqref{eqf0} and is thus a linear combination of $\Phi_{1,2}$. The large $x$
behavior of $B_{1}$ follows from \eqref{sm4} and \eqref{sm4a}. Differentiability of \eqref{cancel0} follows from the last paragraph of Lemma
\ref{coro}.

\end{proof}

\begin{Note}\label{otherj}(i) A similar conclusion is true for $y_-$.

(ii) Without loss of generality we can assume $r(\epsilon)\ne 0$, since otherwise we can modify the definition of $r(\epsilon)$ by adding $\epsilon$ to it.
\end{Note}

\subsection{Estimating the exponentially decaying functions $y_{\pm}$ for large $\epsilon$}\label{yleps}
To prove the main results we use Proposition \ref{jostf} and the properties of the inverse Laplace transform of
$y_\pm(x;\epsilon)$ (see Section \ref{prothm}). These properties depend on the behavior of $y_\pm$ and $s_\pm$ together with its $\epsilon$ derivatives in $\HH$ for large $\epsilon$.  Let $\mathcal{D}=(x_+,\infty)\times
\overline{\mathbb{H}}$. First we prove the following lemma:

\begin{Lemma}\label{sph}
Assume $|V^{(k)}(x)|\lesssim x^{-m-k}$ for $x\geqslant x_+$ and $0\leqslant k\leqslant 2$, and $s(x;\epsilon)=s_+(x;\epsilon)$ solves \eqref{eq:eqs} with $s(x;\epsilon)=o(1)$ for $x\to\infty$. Then for any fixed $x_1\in \mathbb{R}$ the function $s(x;\cdot)$ is analytic in $\mathbb{H}$ and continuous in $\overline{\mathbb{H}}$ for $x\geqslant x_1$.
Moreover, we have
\begin{equation}
  \label{eq:eqests}
  |s(x;\epsilon)|\lesssim  (|\epsilon| \langle x\rangle+1)^{-1}\langle x\rangle^{-m+2}; \ \  |s'(x;\epsilon)|\lesssim  (|\epsilon| \langle x\rangle+1)^{-1}\langle x\rangle^{-m+1}
\end{equation}
uniformly in $\mathcal{D}$, and for $|\epsilon|\geqslant 1/\langle x\rangle$ we have
\begin{equation}
  \label{eq:estder1}
  \left|\frac{\partial^n s(x;\epsilon)}{\partial \epsilon^n} \right|\lesssim  |\epsilon|^{-1-n}\langle x\rangle^{1-m};\ \left|\frac{\partial^n s'(x;\epsilon)}{\partial \epsilon^n} \right|\lesssim  |\epsilon |^{-1-n}\langle x\rangle^{-m}; \ 0\leqslant  n\leqslant  m+1
\end{equation}
A similar conclusion is true for $s_-$, i.e. \eqref{eq:eqests} and \eqref{eq:estder1} hold for both $s_+$ and $s_-$.
\end{Lemma}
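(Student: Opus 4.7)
The plan is to convert the ODE $s''-2\epsilon s'-Vs=V$ together with the decay condition $s(x;\epsilon)=o(1)$ at $+\infty$ into an integral equation by variation of parameters, and then bootstrap the sup and derivative estimates. The homogeneous equation $y''-2\epsilon y'=0$ has basis $\{1,e^{2\epsilon x}\}$ with Wronskian $2\epsilon e^{2\epsilon x}$, and requiring $s\to0$ at $+\infty$ selects (for $\Re\epsilon\geqslant 0$) the one-sided Green's function, so the problem is equivalent to
\begin{equation*}
s(x;\epsilon)=-\int_x^\infty K(x,u;\epsilon)\,V(u)\bigl(1+s(u;\epsilon)\bigr)\,du,\qquad K(x,u;\epsilon):=\int_0^{u-x}e^{-2\epsilon t}\,dt.
\end{equation*}
On $\overline{\mathbb{H}}$ the kernel obeys $|K(x,u;\epsilon)|\leqslant\min\!\bigl(u-x,\,1/|\epsilon|\bigr)\lesssim (u-x)/(1+|\epsilon|(u-x))$, and combined with $|V(u)|\lesssim\langle u\rangle^{-m}$ (with $m\geqslant 3$) a direct computation splitting the integral at $u=x+1/|\epsilon|$ yields $\int_x^\infty|K(x,u;\epsilon)V(u)|\,du\lesssim\langle x\rangle^{2-m}/(1+|\epsilon|\langle x\rangle)$.

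For $x\geqslant x_*$ sufficiently large the right-hand side is $<1/2$, so Picard iteration on the weighted Banach space with norm $\|s\|_*=\sup_{x\geqslant x_*}(1+|\epsilon|\langle x\rangle)\langle x\rangle^{m-2}|s(x)|$ converges to the unique $o(1)$ solution; this gives the first estimate of \eqref{eq:eqests}, and the ODE propagates the solution back to any $x_1$ preserving the estimate. Differentiating the integral equation in $x$ gives $s'(x;\epsilon)=\int_x^\infty e^{-2\epsilon(u-x)}V(u)(1+s(u))\,du$; the bound $|s'|\lesssim\langle x\rangle^{1-m}$ is immediate when $|\epsilon|\langle x\rangle\lesssim 1$, and when $|\epsilon|\langle x\rangle\gtrsim 1$ one integration by parts in $u$ produces a boundary term $V(x)(1+s(x))/(2\epsilon)$ of size $\langle x\rangle^{-m}/|\epsilon|$, which dominates the remaining integral by a short bootstrap using $m\geqslant 3$. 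Analyticity in $\mathbb{H}$ and continuity on $\overline{\mathbb{H}}$ then follow from uniform convergence of the Picard iterates, each of which is manifestly analytic in $\epsilon$ by inspection of the integrand.

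For the $\epsilon$-derivative estimates, differentiating the integral equation $n$ times and isolating $\partial_\epsilon^n s$ gives
\begin{equation*}
(I-\mathcal{L})[\partial_\epsilon^n s]=-\int_x^\infty\partial_\epsilon^n K\cdot V\,du-\sum_{k=1}^{n-1}\binom{n}{k}\int_x^\infty\partial_\epsilon^{n-k}K\cdot V\,\partial_\epsilon^k s\,du,
\end{equation*}
where $\mathcal{L}[g]=-\int_x^\infty KVg\,du$ is the same contraction as above. From $\partial_\epsilon^n K=\int_0^{u-x}(-2t)^n e^{-2\epsilon t}dt$, the change of variable and repeated integration by parts in $t$ produce the two-regime bound $|\partial_\epsilon^n K|\lesssim_n (u-x)^{n+1}/(1+|\epsilon|(u-x))^{n+1}$, equivalently $\lesssim_n\min\bigl((u-x)^{n+1},\,|\epsilon|^{-n-1}\bigr)$. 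Splitting at $u=x+1/|\epsilon|$ and using $|\epsilon|\geqslant 1/\langle x\rangle$ yields $\int_x^\infty|\partial_\epsilon^n K||V|\,du\lesssim|\epsilon|^{-n-1}\langle x\rangle^{1-m}$, while the cross terms, estimated via the inductive hypothesis on $\partial_\epsilon^k s$ for $k<n$, are of strictly smaller order because $m\geqslant 3$. Inverting $I-\mathcal{L}$ closes the induction on $n$ up to $n=m+1$ and gives \eqref{eq:estder1}; differentiating the $s'$ representation analogously yields the companion bound for $\partial_\epsilon^n s'$. The conclusion for $s_-$ follows by the change of variables $x\mapsto -x$.

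The main obstacle is the bookkeeping in the $n$-fold differentiated integral equation: one must track the two natural regimes of $\partial_\epsilon^n K$ (transition at $u-x\sim 1/|\epsilon|$) so that the estimates \eqref{eq:eqests} and \eqref{eq:estder1} match consistently at the interface $|\epsilon|\langle x\rangle\sim 1$, and verify inductively that the cross sums collapse at every order $0\leqslant n\leqslant m+1$. The contraction structure of $I-\mathcal{L}$ is the same at every order, so once the kernel bookkeeping is done the rest of the argument is routine.
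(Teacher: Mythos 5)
Your treatment of the $n=0$ estimates \eqref{eq:eqests} is essentially the paper's argument: the same one-sided integral equation, a contraction for large $x$, and one integration by parts in $u$ to trade a power of $\langle x\rangle$ for a power of $|\epsilon|^{-1}$ (the sign in your Green's function is off, but immaterial). One caveat: ``the ODE propagates the solution back to any $x_1$ preserving the estimate'' is too quick. Backward propagation over a compact $x$-interval preserves boundedness for fixed $\epsilon$ but does not by itself preserve the $|\epsilon|^{-1}$ decay as $|\epsilon|\to\infty$, which \eqref{eq:eqests}--\eqref{eq:estder1} require down to $x=x_+$ (and the analyticity claim down to any $x_1$). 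The paper covers the region $x_1\leqslant x\leqslant x_0$ by a separate large-$\epsilon$ contraction, $s(x)=\frac{1}{2\epsilon}\int_\infty^x(e^{-2\epsilon(t-x)}-1)V(t)(1+s(t))\,dt$, whose operator norm is $O(|\epsilon|^{-1})$ uniformly in $x$ on $[x_1,\infty)$; you need some such argument there.

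The genuine gap is in the $\epsilon$-derivative estimates. Your two-regime bound $|\partial_\epsilon^nK|\lesssim\min\bigl((u-x)^{n+1},|\epsilon|^{-n-1}\bigr)$ is false for $n\geqslant1$ on $\overline{\mathbb{H}}$: with $T=u-x$,
\begin{equation*}
\partial_\epsilon K=\int_0^{T}(-2t)\,e^{-2\epsilon t}\,dt=\frac{T e^{-2\epsilon T}}{\epsilon}-\frac{1-e^{-2\epsilon T}}{2\epsilon^{2}},
\end{equation*}
and for purely imaginary $\epsilon$ (precisely where \eqref{eq:estder1} is later used, on the Bromwich contour) the boundary term gives $|\partial_\epsilon^nK|\sim T^{n}/|\epsilon|$, not $|\epsilon|^{-n-1}$; repeated integration by parts cannot improve this because $e^{-2\epsilon t}$ does not decay at $t=T$. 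Consequently $\int_x^\infty|\partial_\epsilon^nK|\,|V|\,du$ is not $\lesssim|\epsilon|^{-n-1}\langle x\rangle^{1-m}$; under the only available pointwise bounds it actually diverges for $n\geqslant m-1$, which lies inside the required range $n\leqslant m+1$. The same defect afflicts your cross terms and your treatment of $\partial_\epsilon^n s'$. The fix is not finer kernel bookkeeping but a different differentiation: differentiate the ODE rather than the integral representation. Then $u_n=\partial_\epsilon^ns$ satisfies $u_n''-2\epsilon u_n'-Vu_n=2nu_{n-1}'$, the kernel is never differentiated, the forcing $2nu_{n-1}'$ is already small by the inductive hypothesis, and a single integration by parts --- the only one the oscillatory kernel permits --- supplies the one additional factor of $|\epsilon|^{-1}$ gained per derivative. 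This is how the paper closes the induction, and your argument needs to be restructured along these lines.
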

\begin{proof}
  We write \eqref{eq:eqs} in integral form:
  \begin{multline}
    \label{eq:ctr1}
    s(x)=\int_\infty^{x} \int_{\infty}^te^{-2\epsilon(t'-t)}V(t')dt'dt+\int_\infty^{x} \int_{\infty}^te^{-2\epsilon(t'-t)}V(t')s(t')dt'dt:=T_1(x;V)+L\,s
  \end{multline}
It is straightforward to check that $|T_1(x;V)|\lesssim x^{2-m}$, and by integration by parts,
\begin{equation}
  \label{eq:est4}
 |T_1(x;V)|\leqslant \frac{1}{2|\epsilon|}\left|\int_\infty^{x} \(V(t)-\int_{\infty}^te^{-2\epsilon(t'-t)}V'(t')dt'\)dt\right|\lesssim |\epsilon|^{-1}x^{1-m}
\end{equation}
Thus
\begin{equation}
  \label{eq:est41}
 |T_1(x;V)|\lesssim \min(x^{2-m},|\epsilon|^{-1}x^{1-m}) \lesssim (|\epsilon x|+1)^{-1}x^{2-m}
\end{equation}
uniformly in $x\geqslant x_+$ and $\epsilon\in\overline{\mathbb{H}}$.
We analyze \eqref{eq:ctr1} in the Banach space $\mathcal{B}$
of functions $f:\mathcal{D}\to \CC$,
such that $f(x,\cdot)$ is analytic in $\mathbb{H}$,
continuous in $\overline{\mathbb{H}}$, with the norm
\begin{equation}
  \label{eq:norm1}
 \|s\|=\sup_{(x,\epsilon)\in\mathcal{D}}|s(x,\epsilon)|
<\infty
\end{equation}
We see that $T_1\in\mathcal{B}$ and
\begin{equation}
  \label{eq:eqn}
  \|L\|\leqslant \left| \int_\infty^{x} \int_{\infty}^t V(t')dt'dt \right| \leqslant  const. x^{2-m}
\end{equation}
Thus if $x_0$ is sufficiently large then \eqref{eq:ctr1} is contractive in $\mathcal{B}$ and  has a unique solution for $x\geqslant x_0$. Then, the first estimate in \eqref{eq:eqests} is obtained by taking
$x_0$ sufficiently large and writing
\begin{equation}
  \label{eq:eqest42}
  |s(x,\epsilon)|\leqslant  \left| (1-L)^{-1} T_1 \right|
\end{equation}

We see, by direct differentiation of the rhs of \eqref{eq:ctr1} that
  \begin{equation}
    \label{eq68}
    s'(x;\epsilon)=\int_{\infty}^x e^{-2\epsilon(t'-x)}V(t')dt'+ \int_{\infty}^x e^{-2\epsilon(t'-x)}V(t')s(t')dt'
  \end{equation}
This implies
$$|s'(x;\epsilon)|\lesssim x^{1-m}$$
and by integration by parts and the first estimate in \eqref{eq:eqests}
\begin{equation}
  \label{eq:eqsd1}
 \left|s'(x,\epsilon)+\frac{1}{2\epsilon}\(V(x)-\int_{\infty}^xe^{-2\epsilon(t'-x)}V'(t')dt'\)\right|
\lesssim|\epsilon|^{-1}x^{2(1-m)}
\end{equation}
Thus
\begin{equation}
  \label{eq:eqs1}
  |s'(x,\epsilon)|\lesssim  \min(|\epsilon |^{-1}x^{-m},x^{1-m})\lesssim (|\epsilon x|+1)^{-1}x^{1-m}
\end{equation}
Rewriting \eqref{eq68} as
$$s'(x;\epsilon)=\int_{\infty}^0 e^{-2\epsilon t}V(t+x)dt+ \int_{\infty}^0 e^{-2\epsilon t}V(t+x)s(t+x)dt $$
we have by integration by parts
$$s{''}(x;\epsilon)=-\frac{1}{2\epsilon}\(V'(x)-\int_{\infty}^0 e^{-2\epsilon t}V{''}(t+x)dt\)+ \int_{\infty}^0 e^{-2\epsilon t}(V(t+x)s(t+x))'dt $$
which implies
\begin{equation}\label{spp}
|s''(x;\epsilon)|\lesssim  |\epsilon|^{-1}x^{-m-1}
\end{equation}
For the $\epsilon$ derivatives, the proof is by induction on $n$. The equation for $u=\frac{d}{d\epsilon}s(x,\epsilon)$ is
\begin{equation}
  \label{eq:eqdifs}
  u''-2\epsilon u'-V(x)u=2s'(x)
\end{equation}

By \eqref{eq:eqdifs} we have
\begin{equation}\label{ueq}
u(x)=T_1(x;2s')+L\,u
\end{equation}
(see \eqref{eq:ctr1}).
Using \eqref{eq:eqs1} and \eqref{spp} we obtain
\begin{equation}
  \label{}
 |T_1(x;2s')|\leqslant \frac{1}{|\epsilon|}\left|\int_\infty^{x} \(s'(t)-\int_{\infty}^te^{-2\epsilon(t'-t)}s''(t')dt'\)dt\right|\lesssim |\epsilon|^{-2}x^{1-m}
\end{equation}
Thus \eqref{eq:eqn} and \eqref{ueq} imply
$$|u(x,\epsilon)|\lesssim |\epsilon |^{-2}x^{1-m}$$
$$|u'(x,\epsilon)|=\left|2\int_{\infty}^x e^{-2\epsilon(t'-x)}s'(t')dt'+ \int_{\infty}^x e^{-2\epsilon(t'-x)}V(t')u(t')dt'\right|\lesssim |\epsilon |^{-2}x^{-m}$$
Taking $k$ $\epsilon-$derivatives of \eqref{eq:eqdifs} and letting $u_k(x,\epsilon)=\partial^k s(x,\epsilon)/\partial \epsilon^k$ we have
\begin{equation}
  \label{eq75}
  u_k''-2\epsilon u_k'-V(x)u_k=2ku_{k-1}'
\end{equation}
which gives by induction
$$|u_k(x,\epsilon)|\leqslant  \left| (1-L)^{-1} T_1(x;2ku_{k-1}') \right|\lesssim |\epsilon |^{-1-k}|x|^{1-m}$$
$$|u_k'(x,\epsilon)|\lesssim |\epsilon |^{-1-k}|x|^{-m}$$
Finally, for $x_1\leqslant x\leqslant x_0$ existence and analyticity of the solution follow from standard analytic dependence on parameters. Thus \eqref{eq:eqests} and \eqref{eq:estder1} only need to be verified for large $\epsilon$ since the $x$ dependence is irrelevant. Note that by \eqref{eq:eqs} we have
  \begin{equation}
    \label{eq:ctr1a}
    s(x)=\frac{1}{2\epsilon}\int_{\infty}^{x}(e^{-2\epsilon(t-x)}-1)V(t)(1+s(t))dt
  \end{equation}
which is contractive for large $\epsilon$ implying $|s(x)|\lesssim 1/|\epsilon|$. The result for $s'$ then follows from \eqref{eq68}. Similarly using \eqref{eq75} we see that
  \begin{equation}
    \label{eq:ctr1b}
    u_k(x)=\frac{1}{\epsilon}\int_{\infty}^{x}(e^{-2\epsilon(t-x)}-1)(2ku_{k-1}'(t)+V(t)u_k(t))dt
  \end{equation}

       \begin{equation}
    \label{eq68a}
    u_k'(x)=\frac{1}{\epsilon}\left(2ku_{k-1}'(x)+V(x)u_k(x)-\int_{\infty}^{x}e^{-2\epsilon(t-x)}(2ku_{k-1}'(t)+V(t)u_k(t))'dt\right)
  \end{equation}
which gives \eqref{eq:estder1} by induction.
\end{proof}

Proposition \ref{jostf} and Lemma \ref{sph} apply to the cases
$|\epsilon|\leqslant  1/\langle
x\rangle$ and $|\epsilon|\geqslant  1/\langle
x\rangle$ respectively, but it is more convenient to have an
expansion for $|\epsilon|\leqslant  1$. This is established in the
following.
\begin{Corollary}\label{f1exp} For $\pm(x-x_{\mp})\geqslant 0$ and $|\epsilon|\leqslant  1$, the exponentially decaying solution $y_{\pm}$ satisfies
\begin{equation}\label{f1expa}
y_{\pm}(x)=r_{\pm}(\epsilon)e^{\mp\epsilon x}\(\Phi_1^{\pm}(x)+\frac{B_2^{\pm}(x)\epsilon^{m-1}\ln\epsilon}{1+\epsilon \langle x \rangle}+R_f^{\pm}(x;\epsilon)\)
\end{equation}
where $r_{\pm}(\epsilon)$ are of the form \eqref{defr}, the modified Bessel functions $\tilde{\Phi}_1^\pm(x)\sim 1$ and $B_{2}^{\pm}(x)\sim const.x$ for $x\to \pm\infty$, and \eqref{fhat} holds with $R_f^{\pm}(x;\epsilon)$ replacing $\hat{f}_a$. Also

\begin{equation}\label{f1d}
y_{\pm}'(x)=r_{\pm}(\epsilon)e^{\mp\epsilon x}\(\Phi_1'^{\pm}(x)+\frac{B_2'{\pm}(x)\epsilon^{m-1}\ln\epsilon}{1+\epsilon \langle x \rangle}+r_f^{\pm}(x;\epsilon)\)
\end{equation}
where \eqref{fhat} holds with $\langle x\rangle r_f'^{\pm}(x)$ replacing $\hat{f}_a$.
\end{Corollary}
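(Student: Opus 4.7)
The corollary extends the small-$\epsilon$ expansion of Proposition \ref{jostf} (valid on $|\epsilon|\leqslant 1/\langle x\rangle$) to all $|\epsilon|\leqslant 1$ by splicing in the bounds on $s_+$ from Lemma \ref{sph} (which cover the complementary regime $|\epsilon|\geqslant 1/\langle x\rangle$). By the symmetry $x\mapsto -x$ it suffices to treat $y_+$. Take
\[
\Phi_1^+:=\Phi_1,\qquad B_2^+:=B_1,\qquad r_+:=r
\]
from Proposition \ref{jostf}, and define $R_f^+$ so that \eqref{f1expa} is an identity:
\[
R_f^+(x;\epsilon)\;:=\;e^{\epsilon x}\frac{y_+(x;\epsilon)}{r_+(\epsilon)}\;-\;\Phi_1^+(x)\;-\;\frac{B_2^+(x)\epsilon^{m-1}\ln\epsilon}{1+\epsilon\langle x\rangle}.
\]
The interpolating denominator $1+\epsilon\langle x\rangle$ is the central mechanism: on $|\epsilon|\langle x\rangle\leqslant 1$ it is $\approx 1$, so this term reduces to the $B_1(x)\epsilon^{m-1}\ln\epsilon$ of \eqref{cancel0}; on $|\epsilon|\langle x\rangle\geqslant 1$ it cancels the factor $B_2^+(x)\sim\langle x\rangle$ and makes the whole term of order $|\epsilon|^{m-2}|\ln\epsilon|$. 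I then verify \eqref{fhat} for $R_f^+$ separately in the two regions.

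\textbf{Region $|\epsilon|\langle x\rangle\leqslant 1$.} Inserting the expansion \eqref{cancel0} into the definition of $R_f^+$ yields
\[
R_f^+=(e^{\epsilon x}-1)\Phi_1(x)+B_1(x)\epsilon^{m-1}\ln\epsilon\Bigl(e^{\epsilon x}-\tfrac{1}{1+\epsilon\langle x\rangle}\Bigr)+e^{\epsilon x}\hat{f}_a(x;\epsilon).
\]
On this region both $e^{\epsilon x}$ and $(1+\epsilon\langle x\rangle)^{-1}$ are bounded, analytic in $\epsilon$, with $k$-th $\epsilon$-derivatives of size $O(\langle x\rangle^k)$. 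Combined with $|B_1|\lesssim\langle x\rangle$, $|\Phi_1|\lesssim 1$, and the bounds \eqref{fhat} for $\hat{f}_a$, the Leibniz rule immediately yields \eqref{fhat} for $R_f^+$ here.

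\textbf{Region $1/\langle x\rangle\leqslant|\epsilon|\leqslant 1$.} Here Lemma \ref{sph} controls $s_+$ and its derivatives via $|\partial_\epsilon^n s_+|\lesssim|\epsilon|^{-1-n}\langle x\rangle^{1-m}$. Using $y_+=e^{-\epsilon x}(1+s_+)$ and the expansion $1/r_+(\epsilon)=1+O(\epsilon^{m-2}\ln\epsilon)$ (nonvanishing thanks to Note \ref{otherj}(ii)), one rewrites
\[
R_f^+=\bigl(1-\Phi_1^+(x)\bigr)+\frac{s_+}{r_+(\epsilon)}+(1+s_+)\Bigl(\tfrac{1}{r_+(\epsilon)}-1\Bigr)-\frac{B_2^+(x)\epsilon^{m-1}\ln\epsilon}{1+\epsilon\langle x\rangle}.
\]
Each piece is bounded directly: $(1-\Phi_1^+)(x)=O(\langle x\rangle^{2-m})$ from the asymptotics of the modified Bessel combination $\Phi_1$; the $s_+$-contributions use the Lemma \ref{sph} estimates; $1/r_+-1$ has explicit $\epsilon$-derivatives readable from \eqref{defr}; and the last fraction, together with all its $\epsilon$-derivatives, obeys the required bounds by Leibniz using $(1+\epsilon\langle x\rangle)^{-1}\lesssim 1/(|\epsilon|\langle x\rangle)$. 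The derivative identity \eqref{f1d} follows by differentiating \eqref{f1expa} in $x$, absorbing the cross term $-\epsilon r_+(\cdots)$ into $r_f^+$, and applying in each region the $x$-differentiability of \eqref{cancel0} (last line of Proposition \ref{jostf}) and the estimates on $s_+'$ and $\partial_\epsilon^n s_+'$ in \eqref{eq:eqests}--\eqref{eq:estder1}; the extra factor $\langle x\rangle$ in the weight for $r_f^+$ accounts for the power of $\langle x\rangle$ lost upon differentiating $B_2^+(x)$ and $(1+\epsilon\langle x\rangle)^{-1}$.

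\textbf{Main obstacle.} The argument is fundamentally a patching rather than a new estimate, and the delicate point is tracking the sharp dependence on $|\epsilon|$ and $\langle x\rangle$ in the derivative bounds \eqref{fhat} at the top orders $k=m,m+1$, where the weight $\epsilon^{k-1-m+\delta}\langle x\rangle^{-m-1}$ is nearly saturated. In the large-$\epsilon$ region one must repeatedly exploit both $|\epsilon|\leqslant 1$ and $|\epsilon|\geqslant 1/\langle x\rangle$ to trade negative powers of $|\epsilon|$ for negative powers of $\langle x\rangle$ and vice versa; the arbitrarily small $\delta$ provides precisely the slack needed to close this power counting.
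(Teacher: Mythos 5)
Your argument is correct and follows essentially the same route as the paper: Proposition \ref{jostf} covers $|\epsilon|\leqslant 1/\langle x\rangle$ (where multiplying by $e^{\epsilon x}$ and inserting the denominator $1+\epsilon\langle x\rangle$ does not change the structure), and the derivative bounds \eqref{eq:estder1} of Lemma \ref{sph}, which in the regime $1/\langle x\rangle\leqslant|\epsilon|\leqslant 1$ become $|\partial_\epsilon^n s_+|\lesssim \langle x\rangle^{n+2-m}$, handle the complementary region. The only blemish is a harmless algebra slip in your second decomposition: the term $(1+s_+)\bigl(\tfrac{1}{r_+}-1\bigr)$ should be $\tfrac{1}{r_+}-1$ since you have already extracted $s_+/r_+$ separately; the spurious piece $s_+\bigl(\tfrac{1}{r_+}-1\bigr)$ obeys the same bounds, so nothing breaks.
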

\begin{proof}
Without loss of generality we show the result for $y_+$. By Proposition \ref{jostf} we see that \eqref{f1expa} holds for $|\epsilon|\leqslant 1/x$, since multiplying by $e^{\epsilon x}$ and $1/(1+\epsilon \langle x \rangle)$ does not change the structure \eqref{cancel0}. For $1/x \leqslant|\epsilon|\leqslant 1$, we only need to show that \eqref{fhat} is valid with $\hat{f}_{a}$ replaced by $R_f(x;\epsilon)$. By \eqref{eq:estder1} we have for $1/x \leqslant|\epsilon|\leqslant 1$
\begin{equation}
  \label{eq79}
  \left|\frac{\partial^n s_+(x;\epsilon)}{\partial \epsilon^n} \right|\lesssim  x^{n+2-m}; \ 0\leqslant  n\leqslant  m+1
\end{equation}
Now $(r_+(\epsilon))^{-1}(1+s_+(x;\epsilon))$ satisfies the estimates for $\hat{f}_{a}$ in \eqref{fhat} by \eqref{eq79} (cf. Note \ref{otherj}). Since $\ds\frac{B_2^+(x)\epsilon^{m-1}\ln \epsilon}{1+\epsilon \langle x \rangle}$ obviously satisfies the estimates in \eqref{fhat}, we see that $R_f$ satisfies \eqref{fhat}. Differentiability of \eqref{f1expa} follows from Lemma \ref{coro} and Lemma \ref{sph}.
\end{proof}

\section{Singularity properties of $\hat{\psi}$ for $x\in \mathbb{R}$}\label{prothm}
Corollary \ref{f1exp} gives the needed properties of $y_{\pm}$
for $\pm(x-x_{\mp})\geqslant 0$. Now we extend the results to all
$x\in \mathbb{R}$.
\subsection{Exponentially decaying functions $y_\pm$ on the real line}
Assume $V(x)=v_1x^{-m}$ for $x\geqslant x_+$ and $V(x)=v_2x^{-m}$ for $x\leqslant  x_-$. We now show the following

\begin{Proposition}
\label{allx}
The exponentially decaying functions $y_{\pm}$ are analytic in $\epsilon\in\mathbb{H}$ for all $x\in\mathbb{R}$. Moreover, for $|\epsilon|\leqslant  1$ they satisfy
\begin{equation}\label{y1expa}
y_{\pm}(x)=r_{\pm}(\epsilon)e^{\mp\epsilon x}\(r_0^{\pm}(x)+\frac{r_1^{\pm}(x)\epsilon^{m-1}\ln\epsilon}{1+\epsilon\langle x\rangle}+R_0^{\pm}(x;\epsilon)\)
\end{equation}
where $|r_{0}^{\pm}(x)|\lesssim \langle x\rangle$ and $|r_{1}^{\pm}(x)|\lesssim\langle x\rangle^2$ for all $x\in\mathbb{R}$, and for ${\pm}x\geqslant {\pm}x_{\pm}$, $r_{0}^{\pm}(x)$ and $r_{1}^{\pm}(x)$ are explicit modified Bessel functions with $|r_{0}^{\pm}(x)|\lesssim 1, |r_{1}^{\pm}(x)|\lesssim\langle x\rangle$ (cf. Corollary \ref{f1exp}). Furthermore
 $R_0^{\pm}(x;0)=0$ and for arbitrarily small $\delta>0$ we have
\begin{multline}
\label{r0bd} \sup_{\substack{0\leqslant  k\leqslant  m-1 \\ \pm(x-x_{\mp}) \geqslant 0}}\left|\langle x\rangle^{-k}\frac{\partial^k R_0^{\pm}(x;\epsilon)}{\partial \epsilon^k}\right|\lesssim 1
 ; \ \sup_{ \substack{m\leqslant  k\leqslant  m+1\\\pm(x-x_{\mp}) \geqslant 0}}\left||\epsilon|^{k-m+\delta}\langle x\rangle^{-m}\frac{\partial^k R_0^{\pm}(x;\epsilon)}{\partial \epsilon^k}\right|\lesssim 1
 \\
 \sup_{\substack{0\leqslant  k\leqslant  m-1\\\pm(x-x_{\mp}) \leqslant 0}}\left|\langle x\rangle^{-k-1}\frac{\partial^k R_0^{\pm}(x;\epsilon)}{\partial \epsilon^k}\right|\lesssim 1
 ; \ \sup_{\substack{m\leqslant  k\leqslant  m+1\\\pm(x-x_{\mp}) \leqslant 0}}\left||\epsilon|^{k-m+\delta}\langle x\rangle^{-m-1}\frac{\partial^k R_0^{\pm}(x;\epsilon)}{\partial \epsilon^k}\right|\lesssim
 1
 \end{multline}

In addition, \eqref{y1expa} is differentiable in $x$, i.e. \eqref{r0bd} holds with $R_0^{\pm}(x;\epsilon)$ replaced with $\langle x\rangle R_0'^{\pm}(x;\epsilon)$.

 For $|\epsilon|\geqslant 1$ we have
 \begin{equation}\label{y1expb}
y_{\pm}(x;\epsilon)=e^{\mp\epsilon x}(1+s_{\pm}(x;\epsilon))
\end{equation}
where $s_{\pm}(x;\epsilon)$ satisfies \eqref{eq:estder1} for $\pm(x-x_{\mp}) \geqslant 0$, and
\begin{equation}
  \label{eq:estder1a}
  \left|\frac{\partial^n s_{\pm}(x;\epsilon)}{\partial \epsilon^n} \right|\lesssim  |\epsilon|^{-1}\langle x\rangle^n;\ \left|\frac{\partial^n s_{\pm}'(x;\epsilon)}{\partial \epsilon^n} \right|\lesssim  |\epsilon|^{-1}\langle x\rangle^n; \ 0\leqslant  n\leqslant  m+1
\end{equation} for $\pm(x-x_{\mp}) \leqslant 0$.

\end{Proposition}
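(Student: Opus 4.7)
\emph{Proof plan.} The strategy is to reduce everything to the tail results of Corollary \ref{f1exp} and Lemma \ref{sph}. By the $x\mapsto -x$ symmetry interchanging the two tails it suffices to construct $y_+$. I split the argument by the location of $x$ (own tail $x\geqslant x_+$, middle interval $[x_-,x_+]$, far half-line $x\leqslant x_-$) and by the size of $|\epsilon|$.

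\emph{Own tail and middle region.} For $x\geqslant x_+$ and $|\epsilon|\leqslant 1$, setting $r_0^+:=\Phi_1^+$, $r_1^+:=B_2^+$ and $R_0^+:=R_f^+$ turns \eqref{y1expa} into the content of Corollary \ref{f1exp}, and the bounds in \eqref{r0bd} on the own side follow from \eqref{fhat} since $\langle x\rangle\sim x$ there. For $x\geqslant x_+$ and $|\epsilon|\geqslant 1$ Lemma \ref{sph} is exactly the claim. On the compact interval $[x_-,x_+]$ the ODE \eqref{eq:hom1} has coefficients smooth in $x$ and polynomial in $\epsilon$, so the IVP with initial data $y_+(x_+;\epsilon),\,y_+'(x_+;\epsilon)$ (already known to be analytic in $\epsilon\in\overline{\HH}$ by the own-side analysis) yields a solution analytic in $\epsilon$ and smooth in $x$ on that interval; differentiating \eqref{eq:hom1} $k$ times in $\epsilon$ gives inhomogeneous linear ODEs for $\partial_\epsilon^k y_+$ that are controlled inductively by Gronwall, producing uniform bounds for $0\leqslant k\leqslant m+1$.

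\emph{Far half-line $x\leqslant x_-$.} Here $V=v_2 x^{-m}$, so Corollary \ref{f1exp} and Lemma \ref{sph} apply again, yielding the exponentially decaying solution $y_-(x)=r_-(\epsilon)e^{\epsilon x}(\Phi_1^-(x)+\cdots)$ and, by the same construction on the left tail, a linearly independent companion $\tilde y_-(x)=\tilde r(\epsilon)e^{-\epsilon x}(\tilde\Phi_1^-(x)+\cdots)$ (the left-tail analogue of $y_+$). On this half-line I write
\begin{equation*}
y_+(x)=A(\epsilon)\tilde y_-(x)+B(\epsilon)y_-(x),
\end{equation*}
with $A,B$ determined by matching $y_+,y_+'$ at $x=x_-$. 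The matching determinant is the Wronskian $W(\tilde y_-,y_-)(\epsilon)$, which we may assume nonzero on $|\epsilon|\leqslant 1$ (cf.\ Note \ref{otherj}), so $A,B$ are analytic in $\epsilon\in\HH$ and continuous up to the boundary. Substituting the known expansions gives \eqref{y1expa}: the contribution $A\tilde y_-/[r_+(\epsilon)e^{-\epsilon x}]$ provides the advertised structure with $r_0^+(x)=O(\langle x\rangle)$ and $r_1^+(x)=O(\langle x\rangle^2)$ (the extra $\langle x\rangle$ over the own side accounts for the polynomial growth in $x$ of the growing Bessel coefficients $\tilde\Phi_1^-,\tilde B_2^-$), while $By_-/[r_+(\epsilon)e^{-\epsilon x}]$ carries an extra $e^{2\epsilon x}$, exponentially small as $x\to-\infty$ with $\Re\epsilon\geqslant 0$, and is absorbed into $R_0^+$. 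For $|\epsilon|\geqslant 1$ an analogous matching combined with a Volterra iteration on $(-\infty,x_-]$ (as in Lemma \ref{sph}, but starting from data at $x_-$ supplied by the middle-region analysis) gives the weaker bounds \eqref{eq:estder1a}; the factors $\langle x\rangle^n$ in place of $|\epsilon|^{-1-n}\langle x\rangle^{-m}$ reflect that $s_+$ no longer vanishes as $x\to-\infty$ but only stays bounded, and $\epsilon$-differentiation produces $\langle x\rangle^n$-type factors from the explicit exponential.

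\emph{Main obstacle.} The delicate point is bookkeeping: the scalars $A(\epsilon),B(\epsilon),r_\pm(\epsilon),\tilde r(\epsilon)$ each carry their own $\epsilon^{m-2}\ln\epsilon$ singularities (through the data at $x_-$ and through \eqref{defr}), and one must verify that after forming the ratio $A\tilde r/r_+$ the residual logarithmic singularity of $y_+/[r_+(\epsilon)e^{-\epsilon x}]$ on the far side still appears with exactly the prescribed envelope $r_1^+(x)\epsilon^{m-1}\ln\epsilon/(1+\epsilon\langle x\rangle)$ and with $r_{0,1}^+(x)$ independent of $\epsilon$, no higher log powers being generated. This is checked directly: the Bessel coefficients $\Phi_j^\pm,\tilde\Phi_j^\pm,B_2^\pm,\tilde B_2^\pm$ are $\epsilon$-independent, so all $\epsilon$-singular structure lives in the scalar factors, which are products and quotients of the known log-analytic functions $r_\pm,\tilde r$ and of $A,B$ (the latter analytic once the Wronskian has been factored out), and an explicit expansion of these scalars in powers of $\epsilon^{m-2}\ln\epsilon$ shows that the next-to-leading terms fit into $R_0^+$ with the bounds \eqref{r0bd}.
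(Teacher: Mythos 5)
Your overall architecture (own tail from Corollary \ref{f1exp}/Lemma \ref{sph}, analytic dependence on parameters on $[x_-,x_+]$, matching at $x_-$, Wronskian bookkeeping) is the same as the paper's, but there is a genuine gap in the far half-line step. You build $y_+$ on $x\leqslant x_-$ as $A(\epsilon)\tilde y_-+B(\epsilon)y_-$, where $\tilde y_-=\tilde r(\epsilon)e^{-\epsilon x}(\tilde\Phi_1^-(x)+\cdots)$ is asserted to exist ``by the same construction on the left tail.'' It does not: the machinery of Sections \ref{pros}--\ref{yleps} produces only the \emph{recessive} solution on each tail. Concretely, the fixed-point equation \eqref{eq:ctr1} for $s_+$ is contractive because its kernel $e^{-2\epsilon(t'-t)}$ has $t'\geqslant t$ and hence modulus at most $1$ for $\Re\epsilon\geqslant 0$; the analogous integral equation for the correction $\tilde s$ in a dominant solution $e^{-\epsilon x}(1+\tilde s)$ on $(-\infty,x_-]$ with $\tilde s\to 0$ as $x\to-\infty$ has kernel $e^{2\epsilon(t-t')}$ with $t\geqslant t'$, which is exponentially large (the inner integral even diverges for $\Re\epsilon>0$ against a polynomially decaying $V$). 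Moreover a dominant solution is only defined modulo multiples of $y_-$, so your $\tilde y_-$ is not even pinned down, and its claimed $\epsilon$-regularity uniform down to $\epsilon=0$ as $x\to-\infty$ --- which is exactly what \eqref{r0bd} requires on that side --- is the whole content of the step, not something you may import. (A smaller slip: you cite Note \ref{otherj} for nonvanishing of the matching Wronskian; the relevant input is Note \ref{wron}, where $W(0)\neq 0$ comes from the absence of a zero-energy resonance.)

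The paper closes this gap by never constructing a second tail solution abstractly: it uses reduction of order,
\begin{equation*}
y_+(x;\epsilon)=y_-(x;\epsilon)\left(\frac{y_+(x_-;\epsilon)}{y_-(x_-;\epsilon)}-W(\epsilon)\int_{x_-}^{x}\frac{dt}{y_-^2(t;\epsilon)}\right),
\end{equation*}
so the ``dominant companion'' is realized explicitly as $y_-\int_{x_-}^{x}y_-^{-2}$ and inherits all its $\epsilon$-regularity from $y_-$, from the matching data at $x_-$, and from the Wronskian expansion \eqref{wron1}; the estimates \eqref{r0bd} and \eqref{eq:estder1a} then follow by integration by parts in that formula. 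If you replace your undefined $\tilde y_-$ by $y_-\int_{x_-}^{x}y_-^{-2}\,dt$ (with $A=-W$, $B=y_+(x_-)/y_-(x_-)$), your argument becomes the paper's, and your final ``bookkeeping'' paragraph about the $\epsilon^{m-2}\ln\epsilon$ scalars then goes through essentially as you describe.
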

\begin{proof} We only analyze $y_+$ since $y_-$ is similar.
For $x\geqslant x_+$, the behavior of $y_+$ for small $\epsilon$ is given in Corollary \ref{f1exp} and the behavior for large $\epsilon$ is given in Lemma \ref{sph}.

First assume $|\epsilon|\leqslant  1$. In the middle region $x_-\leqslant  x\leqslant  x_+$ there exist two linearly independent solutions analytic in $\epsilon$ (by analytic dependence on parameters of ODE) and the $x$ bounds are obvious. Clearly $y_+$ is a linear combination of these two solutions, and thus the analytic structure of $y_+$ is preserved (cf. Corollary \ref{f1exp}). For $x\leqslant  x_-$, by \eqref{eq:eqests} we can assume $x_-$ is large enough such that $y_-(x_-)\ne 0$. By standard ODE results $y_+$ satisfies
\begin{equation}\label{ode2}
y_+(x;\epsilon)=y_-(x;\epsilon)\(\frac{y_+(x_-;\epsilon)}{y_-(x_-;\epsilon)}-W(\epsilon)\int_{x_-}^x \frac{1}{y_-^2(t;\epsilon)}dt\)
\end{equation}
where $W(\epsilon)=y_+y_-'-y_+'y_-$ is the Wronskian.

One can verify by direct calculation that $y_+$ solves \eqref{eq:hom1} for $x\leqslant  x_-$ and is differentiable at $x_-$.
By Corollary \ref{f1exp} and the reasoning above we see that for $x_- \leqslant x\leqslant x_+$
\begin{equation}\label{y1mid}
y_+(x;\epsilon)=r(\epsilon)e^{-\epsilon x}\(\Phi_0(x)+\frac{\hat{B}_2(x)\epsilon^{m-1}\ln\epsilon}{1+\epsilon \langle x \rangle}+\hat{R}_f(x;\epsilon)\)
\end{equation}
where $\Phi_0$ and $\hat{B}_2$ are smooth, and with \eqref{fhat} holds with $\hat{R}_f$ replacing $\hat{f}_a$.
\begin{Note}\label{wron}
Assume $|\epsilon|\leqslant 1$. Since the Wronskian $W$ is independent of $x$, by direct calculation using \eqref{y1mid} and \eqref{f1expa} at $x_-$ for $|\epsilon|\leqslant 1$ we have \begin{equation}\label{wron1}
W(\epsilon)=r_+(\epsilon)r_-(\epsilon)(q_1\epsilon^{m-1}\ln\epsilon+q_2(\epsilon))
\end{equation}
where $q_1$ is a constant, and by Corollary \ref{f1exp} we have $\ds \frac{d^k q_2(\epsilon)}{d\epsilon^k}$ is bounded for $0\leqslant k\leqslant m-1$, and $\ds \epsilon^{k-m+\delta}\frac{d^k q_2(\epsilon)}{d\epsilon^k}$ is bounded for $m\leqslant k\leqslant m+1$ and arbitrarily small $\delta>0$. In the absence of zero-energy resonance, $y_+$ and $y_-$ are linearly independent for small $\epsilon$, and thus $q_2(0)=W(0)\ne 0$.

Assume $|\epsilon|\geqslant 1$. By direct calculation using \eqref{y1expb} at $x_-$ for $|\epsilon|\geqslant 1$, denoting
\begin{equation}\label{eqq3}q_3(\epsilon)=W(\epsilon)-2\epsilon\end{equation}
we have $\ds \frac{d^k q_3(\epsilon)}{d\epsilon^k}$ is bounded for $0\leqslant  k\leqslant  m+1$.

\end{Note}

By \eqref{ode2} we have
$$e^{\epsilon x}y_+(x;\epsilon)=r(\epsilon)\(\hat{\Phi}_1(x)+\frac{B_3(x)\epsilon^{m-1}\ln\epsilon}{1+\epsilon\langle x \rangle}+x\hat{f}_b(x;\epsilon)\)$$
where
$\ds \hat{\Phi}_1(x)=\Phi_1^-(x)\(\frac{\Phi_1^+(x_-)}{\Phi_1^-(x_-)}-W(0)\int_{x_-}^x\frac{1}{{\Phi_1^-}^2(t)}dt\)$ satisfies $|\hat{\Phi}_1(x)|\lesssim \langle x\rangle$, $B_3$ is a smooth function with $|B_3|\lesssim \langle x\rangle^2$
and \eqref{fhat} holds with $\hat{f}_a$ replaced by $\hat{f}_b$ by \eqref{ode2}, which  can be checked using the fact that   for any bounded function $f$ we have
$$ \left|e^{2\epsilon x}\int_{x_-}^x e^{-2\epsilon t} t^k f(t)dt\right|\lesssim |x|^{k+1}$$
Thus \eqref{y1expa} is valid for $x\leqslant x_-$.

Assume $|\epsilon|\geqslant 1$. In the middle region $x_-\leqslant   x\leqslant  x_+$ the result \eqref{y1expb} follows from Lemma \ref{sph}.

For $x\leqslant x_-$, we denote $f_{\pm}=1+s_{\pm}$ and by \eqref{ode2} and Lemma \ref{sph} we have
\begin{equation}\label{s2s3}
f_+(x;\epsilon)=e^{\epsilon x}y_+(x;\epsilon)=f_-(x;\epsilon)(1+s_3(x;\epsilon))\end{equation}
where
\begin{multline}\label{s3part}
s_3(x;\epsilon)=-1+\frac{f_+(x_-;\epsilon)}{f_-(x_-;\epsilon)}e^{2\epsilon (x-x_-)}-(2\epsilon +q_3(\epsilon))\int_{x_-}^x \frac{e^{2\epsilon (x-t)}}{f_-^2(t;\epsilon)}dt
\\=\frac{s_+(x_-;\epsilon)-s_-(x_-;\epsilon)}{f_-(x_-;\epsilon)}e^{2\epsilon (x-x_-)}-q_3(\epsilon)\int_{x_-}^x \frac{e^{2\epsilon (x-t)}}{f_-^2(t;\epsilon)}dt+2\epsilon \int_{x_-}^x \frac{e^{2\epsilon (x-t)}s_-(t;\epsilon)(s_-(t;\epsilon)+2)}{f_-^2(t;\epsilon)}dt\end{multline}
Using \eqref{eq:estder1} in Lemma \ref{sph} we see that the first term in \eqref{s3part} satisfies the estimates in \eqref{eq:estder1a}. By integration by parts we obtain
\begin{equation}q_3(\epsilon)\int_{x_-}^x \frac{e^{2\epsilon (x-t)}}{f_-^2(t;\epsilon)}dt=\frac{q_3(\epsilon)}{2\epsilon}
\bigg(\frac{e^{2\epsilon (x-x_-)}}{f_-^2(x_-;\epsilon)}
-\frac{1}{f_-^2(x;\epsilon)}+\int_{x_-}^x e^{2\epsilon (x-t)}\(\frac{1}{f_-^2(t;\epsilon)}\)'dt\bigg)
\end{equation}
and
\begin{multline}2\epsilon \int_{x_-}^x \frac{e^{2\epsilon (x-t)}s_-(t;\epsilon)(s_-(t;\epsilon)+2)}{f_-^2(t;\epsilon)}dt=
\frac{e^{2\epsilon (x-x_-)}s_-(x_-;\epsilon)(s_-(x_-;\epsilon)+2)}{f_-^2(x_-;\epsilon)}\\
-\frac{s_-(x;\epsilon)(s_-(x;\epsilon)+2)}{f_-^2(x_;\epsilon)}
+\int_{x_-}^x e^{2\epsilon (x-t)}\(\frac{s_-(t;\epsilon)(s_-(t;\epsilon)+2)}{f_-^2(t;\epsilon)}\)'dt\end{multline}
Thus using the estimates in Lemma \ref{sph} as well as \eqref{s3part} we see that the first estimate in \eqref{eq:estder1a} holds with $s_+$ replaced by $s_3$.

To show \eqref{eq:estder1a} for $s_+$, note that by \eqref{s2s3} we have $s_+(x;\epsilon)=s_-(x;\epsilon)+s_3(x;\epsilon)-s_-(x;\epsilon)s_3(x;\epsilon)$. Since $s_-$ satisfies \eqref{eq:estder1} and $s_3$ satisfies $\left|\frac{\partial^n s_{3}(x;\epsilon)}{\partial \epsilon^n} \right|\lesssim  |\epsilon|^{-1}\langle x\rangle^n$,
 we see that the first estimate in \eqref{eq:estder1a} holds for $s_+$.

To estimate $s_+'$, note that by \eqref{ode2} and \eqref{s2s3}
\begin{multline}s_+'(x;\epsilon)=(s_-'(x;\epsilon)+2\epsilon+2\epsilon s_-(x;\epsilon))\(1+s_3(x;\epsilon)\)-\frac{W(\epsilon)}{h_-(x;\epsilon)}\\
=2\epsilon(s_-(x;\epsilon)+s_3(x;\epsilon)-s_+(x;\epsilon))+O(\epsilon^{-1})
=2\epsilon s_-(x;\epsilon)s_3(x;\epsilon)+O(\epsilon^{-1})
\end{multline}
Thus the second inequality of \eqref{eq:estder1a} holds.

The proof for $s_-$ is similar by symmetry.
\end{proof}

\subsection{Analysis of $\hat{\psi}$}\label{hat}
Recall that the solution of \eqref{psiwave} is given in \eqref{invlap}.
Note that $W(\epsilon)\ne 0$ for $\epsilon\in \overline{\mathbb{H}}$ by the assumption of no bound state and no zero energy resonance.

To obtain the estimates for $\frac{\sin(t\sqrt{A})}{\sqrt{A}}\psi_1$ and $\cos(t\sqrt{A})\psi_0$ in Theorem
\ref{xbounds}, we need the $\epsilon$-expansions of $\mathcal{G}(\psi_1)$ and $\mathcal{G}(\epsilon\psi_0)$:
\begin{Proposition}
\label{hard1}
For $\epsilon\in \overline{\mathbb{H}}\backslash \{0\}$ we have
\begin{equation}\label{hardeq}
\mathcal{G}(\psi_1)=r_3(x)\frac{\epsilon^{m-1}\ln\epsilon}{(1+\epsilon\langle x\rangle)^{m+2}}+\mathcal{G}_0(\psi_1(x))+R(x;\epsilon)
\end{equation}
where
\begin{equation}\label{r3est}
||\langle x\rangle^{-2}r_3(x)||_{\infty}\lesssim ||\langle x\rangle^2\psi_1(x)||_{1}
\end{equation}

\begin{equation}\label{defg0}\mathcal{G}_0(\psi):=\frac{1}{2(1+\epsilon)}\(\int_{\infty}^{x}e^{-\epsilon (u-x)}\psi(u)du-\int_{-\infty}^{x}e^{\epsilon (u-x)}\psi(u)du\)\end{equation}

\begin{multline}\label{rrbd}
||\langle x\rangle^{-k-2}\frac{\partial^{k} }{\partial \epsilon^{k}}R(x,\epsilon)||_{\infty}\lesssim (1+|\epsilon|)^{-2}||\langle x\rangle^{k+2}\psi_1(x)||_{1} \ \ (0\leqslant  k\leqslant  m-1)
\\
||\langle x\rangle^{-m-2}\frac{\partial^{m} }{\partial \epsilon^{m}}R(x,\epsilon)||_{\infty}\lesssim (|\epsilon|^{\delta}+|\epsilon|)^{-2}||\langle x\rangle^{m+2}\psi_1(x)||_{1}
\end{multline}
for arbitrary $\delta>0$.
\end{Proposition}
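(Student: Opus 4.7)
The plan is to substitute the expansions of $y_\pm(x;\epsilon)$ from Proposition \ref{allx} and of the Wronskian from Note \ref{wron} into the explicit formula \eqref{invlap}, and then sort the result by powers of $\epsilon^{m-1}\ln\epsilon$. I first treat $|\epsilon|\le 1$, where \eqref{y1expa} and \eqref{wron1} apply; the regime $|\epsilon|\ge 1$ is handled by \eqref{y1expb} together with \eqref{eqq3} and will produce only contributions to the remainder $R$.

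The key algebraic observation is that in each of the two products $y_\mp(x)y_\pm(u)/W(\epsilon)$ appearing in $\mathcal{G}(\psi_1)$, the prefactors $r_\pm(\epsilon)$ from \eqref{y1expa} cancel exactly against the $r_+(\epsilon)r_-(\epsilon)$ in \eqref{wron1}. What remains is a kernel of the form
$$\frac{e^{-\epsilon(u-x)}\,Y^{-}(x;\epsilon)\,Y^{+}(u;\epsilon)}{q_2(\epsilon)+q_1\epsilon^{m-1}\ln\epsilon},$$
where $Y^{\pm}(z;\epsilon)=r_0^{\pm}(z)+\frac{r_1^{\pm}(z)\epsilon^{m-1}\ln\epsilon}{1+\epsilon\langle z\rangle}+R_0^{\pm}(z;\epsilon)$, and symmetrically for the other product. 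Since $q_2(0)\ne 0$ (this is exactly the no-resonance assumption), I expand $(q_2+q_1\epsilon^{m-1}\ln\epsilon)^{-1}=q_2^{-1}-q_2^{-2}q_1\epsilon^{m-1}\ln\epsilon+O(\epsilon^{2m-2}(\ln\epsilon)^2)$ and multiply out. Three kinds of terms result: those with no $\ln\epsilon$ factor, the single $\epsilon^{m-1}\ln\epsilon$ terms, and higher-order contributions. The first kind, after matching against the explicit denominator $2(1+\epsilon)$ in \eqref{defg0}, yields $\mathcal{G}_0(\psi_1)$ plus a smooth correction in $\epsilon$ that is absorbed into $R$. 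The second kind collects contributions from the $r_1^{\pm}$ pieces of $Y^{\pm}$ together with the $q_1\epsilon^{m-1}\ln\epsilon$ piece of $1/W$, and is packaged as $r_3(x)\epsilon^{m-1}\ln\epsilon/(1+\epsilon\langle x\rangle)^{m+2}$. The denominator $(1+\epsilon\langle x\rangle)^{m+2}$ is inserted to keep the expression uniformly bounded on $\overline{\HH}\cap\{|\epsilon|\le 1\}$; extracting it and using $|r_{0,1}^{\pm}(\cdot)|\lesssim\langle\cdot\rangle^2$ (cf. \eqref{r0bd} and Corollary \ref{f1exp}) together with $|e^{\mp\epsilon(u-x)}|\le 1$ on the favourable halves of the integration ranges gives \eqref{r3est}. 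All other terms are collected into $R$.

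The bulk of the work will go into verifying the derivative estimates \eqref{rrbd}. For $0\le k\le m-1$, each $\partial_\epsilon$ in the kernel either lands on $e^{\mp\epsilon(u-x)}$ (producing a factor $\pm(u-x)$, which is absorbed by raising the weight on $\psi_1$ by one power), on $R_0^{\pm}$ or $r_0^{\pm}$ (controlled by the first line of \eqref{r0bd}, contributing an extra $\langle\cdot\rangle$ per derivative), or on the analytic factor $1/q_2(\epsilon)$ (which produces only bounded contributions thanks to $q_2(0)\ne 0$). The decay factor $(1+|\epsilon|)^{-2}$ is obtained by combining the bound on $|\epsilon|\le 1$ with the large-$\epsilon$ regime: for $|\epsilon|\ge 1$ the identity $W(\epsilon)=2\epsilon+q_3(\epsilon)$ gives $|W(\epsilon)|\gtrsim|\epsilon|$, and integration by parts in $u$ against $e^{\mp\epsilon(u-x)}$ yields an extra factor of $\epsilon^{-1}$ at the cost of one derivative on $\psi_1$ or on $Y^{\pm}$; the bounds \eqref{eq:estder1a} on $s_{\pm}$ provide the needed control. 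For $k=m$, the loss $|\epsilon|^\delta$ in \eqref{rrbd} is exactly the loss incurred by differentiating $R_0^{\pm}$ $m$ times, governed by the second line of \eqref{r0bd}, and it propagates through the product without further deterioration.

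The main obstacle I anticipate is the identification of $\mathcal{G}_0$ with the particular denominator $2(1+\epsilon)$: the candidate $q_2(\epsilon)^{-1}$ arising from the expansion of $1/W$ is not equal to $[2(1+\epsilon)]^{-1}$, so one must verify that $q_2(\epsilon)^{-1}-[2(1+\epsilon)]^{-1}$, multiplied by the non-singular part of the kernel, defines a smooth function of $\epsilon$ whose contribution fits into $R$ with the right derivative bounds. Beyond this identification, the remaining combinatorics in the derivative estimates is mechanical: the number of terms generated is bounded in terms of $m$ alone, and each inherits the required structure from \eqref{r0bd}, Corollary \ref{f1exp}, and Note \ref{wron}.
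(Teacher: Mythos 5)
Your overall route coincides with the paper's: substitute \eqref{y1expa} and \eqref{wron1} into \eqref{invlap}, cancel the prefactors $r_{\pm}(\epsilon)$ against $r_+(\epsilon)r_-(\epsilon)$ in the Wronskian, expand $1/W$ around $q_2(0)=W(0)\neq 0$, read off the coefficient of $\epsilon^{m-1}\ln\epsilon$ as $r_3$, and push everything else into $R$; the $|\epsilon|^{\delta}$ loss at $k=m$ does indeed come from the second line of \eqref{r0bd}. Two points in your write-up need correction, one of which is a step that would fail as described.

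First, the mechanism you propose for the $(1+|\epsilon|)^{-2}$ decay of $R$ when $|\epsilon|\geqslant 1$ --- integration by parts in $u$ against $e^{\mp\epsilon(u-x)}$ --- would place a derivative on $\psi_1$ and hence produce bounds involving $\|\langle x\rangle^{k+2}\psi_1'\|_1$, which are not permitted in \eqref{rrbd}. (Integration by parts in $u$ is exactly what the paper does for $\mathcal{G}(\epsilon\psi_0)$ in Proposition \ref{hard0}, and that is why $\psi_0'$ norms appear there while no $\psi_1'$ norms appear here.) The correct mechanism needs no integration by parts: writing $f_{\pm}=1+s_{\pm}$ and expanding the numerator of \eqref{invlap}, every term containing at least one factor $s_{\pm}$ or $\partial_\epsilon^n s_{\pm}$ is $O(|\epsilon|^{-1})$ by \eqref{eq:estder1} and \eqref{eq:estder1a}, and after division by $W(\epsilon)=2\epsilon+q_3(\epsilon)=O(|\epsilon|)$ it is $O(|\epsilon|^{-2})$; the single term free of $s_{\pm}$ equals $\tfrac{2(1+\epsilon)}{W(\epsilon)}\mathcal{G}_0(\psi_1)$, and subtracting $\mathcal{G}_0(\psi_1)$ leaves $\tfrac{2-q_3(\epsilon)}{W(\epsilon)}\mathcal{G}_0(\psi_1)=O(|\epsilon|^{-2})$.

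Second, the ``main obstacle'' you anticipate --- reconciling $q_2(\epsilon)^{-1}$ with $[2(1+\epsilon)]^{-1}$ --- dissolves once one notices that for $|\epsilon|\leqslant 1$ no such matching is attempted: the entire term $\mathcal{G}_0(\psi_1)$ already satisfies the bounds \eqref{rrbd} in that regime (using $\big|\int_{\pm\infty}^{x}(u-x)^k e^{\mp\epsilon(u-x)}\psi_1(u)\,du\big|\lesssim\langle x\rangle^k\|\langle x\rangle^k\psi_1\|_1$), so it may be moved freely between $R$ and the displayed terms; the specific normalization $2(1+\epsilon)$ is only relevant for $|\epsilon|\geqslant 1$, where it is accounted for as above. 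With these two repairs, your argument reproduces the paper's proof.
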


\begin{proof}
The fact that $\mathcal{G}(\psi_1)$ is analytic in $\epsilon\in \mathbb{H}$ follows easily from Proposition \ref{allx}.

We first consider the case $ |\epsilon|\leqslant  1$.
By \eqref{y1expa} we have
\begin{equation}\label{numg}
y_-(x)\int_{\infty}^{x}y_+(u)\psi_1(u)du-y_+(x)\int_{-\infty}^{x}y_-(u)\psi_1(u)du=r_+(\epsilon)r_-(\epsilon) \sum_{k,j=1,2}\tilde{G}_{j,k}(x,\epsilon)
\end{equation}
where \begin{multline}\tilde{G}_{j,k}(x,\epsilon)=e^{\epsilon x}G_j^-(x;\epsilon)\int_{\infty}^{x}e^{-\epsilon u}G_k^+(u;\epsilon)\psi_1(u)du -e^{-\epsilon x}G_j^+(x;\epsilon)\int_{-\infty}^{x}e^{\epsilon u}G_k^-(u;\epsilon)\psi_1(u)du
\end{multline}
$\ds G_1^{\pm}(x;\epsilon)=\frac{r_1^{\pm}(x)\epsilon^{m-1}\ln\epsilon}{1+\epsilon \langle x\rangle} $, and  $G_2^{\pm}(x;\epsilon)=r_0^{\pm}(x)+R_0^{\pm}(x;\epsilon)$. By Proposition \ref{allx} clearly \eqref{r0bd} holds with  $G_2^{\pm}$ instead of $R_0^{\pm}$. We denote for $(j,k)=(0,1)$ or $(1,0)$ $$\hat{G}_{j,k}(x)=\(r_j^{-}(x)\int_{\infty}^{x}r_k^+(u)\psi_1(u)du-r_j^{+}(x)\int_{-\infty}^{x}r_k^-(u)\psi_1(u)du \)$$
By direct calculation for $0\leqslant k\leqslant m-1$
\begin{multline}\label{eq98}
\bigg|\frac{\partial^k}{\partial \epsilon^k}\bigg( \tilde{G}_{2,1}(x,\epsilon)-\epsilon^{m-1}\ln\epsilon\hat{G}_{0,1}(x)  \bigg)\bigg|\lesssim \sum_{\substack{i+j+l=k\\i< m-1}}|\epsilon|^{m-1-i-\delta} \langle x\rangle^{1+j} {\big \|}\langle x\rangle^{2+l}\psi_1(x) \big \|_1\\
\lesssim \langle x\rangle^{1+k} \|\langle x\rangle^{2+k}\psi_1(x)\|_1
\end{multline}
\begin{multline}\bigg|\frac{\partial^m}{\partial \epsilon^m}\bigg( \tilde{G}_{2,1}(x,\epsilon)-\epsilon^{m-1}\ln\epsilon\hat{G}_{0,1}(x)  \bigg)\bigg|\lesssim \sum_{\substack{i+j+l=m\\i< m}}|\epsilon|^{m-1-i-\delta} \langle x\rangle^{1+j} {\big \|}\langle x\rangle^{2+l}\psi_1(x) \big \|_1\\
\lesssim |\epsilon|^{-\delta}\langle x\rangle^{1+m} \|\langle x\rangle^{2+m}\psi_1(x)\|_1
\end{multline}
and it can be similarly shown that for $0\leqslant k\leqslant m$
$$\bigg|\frac{\partial^k}{\partial \epsilon^k}\bigg( \tilde{G}_{1,2}(x,\epsilon)-\epsilon^{m-1}\ln\epsilon\hat{G}_{1,0}(x)  \bigg)\bigg|\lesssim (1+|\epsilon|^{m-k-\delta})\langle x\rangle^{2+k} \|\langle x\rangle^{1+k}\psi_1(x)\|_1$$
Also for $0\leqslant k\leqslant m$
\begin{multline}\bigg|\frac{\partial^k}{\partial \epsilon^k} \tilde{G}_{1,1}(x,\epsilon)\bigg|\lesssim \sum_{i+j+l=k}
\frac{|\epsilon|^{2m-2-i}(|\ln\epsilon|+1)^2\langle x\rangle^{2+j}}{1+|\epsilon|\langle x\rangle} \left \|\frac{\langle x\rangle^{2+l}\psi_1(x)}{1+|\epsilon|\langle x\rangle} \right \|_1 \\
\lesssim \langle x\rangle^{1+k} \|\langle x\rangle^{1+k}\psi_1(x)\|_1
\end{multline}
\begin{equation}\label{eq101}
\bigg|\frac{\partial^k}{\partial \epsilon^k} \tilde{G}_{2,2}(x,\epsilon)\bigg|\lesssim \sum_{i+j+l=k}(|\epsilon|^{m-i-\delta}+1)\langle x\rangle^{1+j} \left \|\langle x\rangle^{1+l}\psi_1(x) \right \|_1
\end{equation}
Using \eqref{eq98}-\eqref{eq101} as well as \eqref{wron1} we have
\begin{equation}\label{eqrhat}
\mathcal{G}(\psi_1)=\frac{r_+(\epsilon)r_-(\epsilon)\sum_{k,j=1,2}\tilde{G}_{j,k}(x,\epsilon)}{W(\epsilon)}
= r_3(x)\epsilon^{m-1}\ln\epsilon+\hat{R}(x,\epsilon)\end{equation}
where $r_3(x)=(\hat{G}_{0,1}(x)+\hat{G}_{1,0}(x))/W(0)$ satisfies \eqref{r3est} and \eqref{rrbd} holds with $\hat{R}$ replacing $R$. \eqref{eqrhat} and \eqref{hardeq} give \begin{equation}\label{rrh}
R(x,\epsilon)-\hat{R}(x,\epsilon)=r_3(x)\epsilon^{m-1}\ln\epsilon\(1-\frac{1}{(1+\epsilon\langle x\rangle)^{m+2}}\)-\mathcal{G}_0(\psi_1)
\end{equation}
Now by direct calculation \eqref{rrbd} holds with the right side of \eqref{rrh} replacing $R$, using $$\left|\int_{\pm\infty}^{x}(u-x)^ke^{\mp\epsilon (u-x)}\psi_1(u)du\right| \lesssim \langle x\rangle^k ||\langle x\rangle^{k}\psi_1(x)||_{1} $$
for the estimates involving $\mathcal{G}_0(\psi)$. Thus \eqref{hardeq} is valid for $|\epsilon|\leqslant 1$.

For $|\epsilon|\geqslant 1$, we analyze the numerator of \eqref{invlap} by noting that (recall that $f_{\pm}=1+s_{\pm}$)
\begin{multline}
f_-(u;\epsilon) \int_{\infty}^{x}e^{-\epsilon (u-x)}f_+(u;\epsilon)\psi_1(u)du =
\int_{\infty}^{x}e^{-\epsilon (u-x)}\psi_1(u)du\\ + s_-(x;\epsilon) \int_{\infty}^{x}e^{-\epsilon (u-x)}f_+(u;\epsilon)\psi_1(u)du+\int_{\infty}^{x}e^{-\epsilon (u-x)}s_+(u;\epsilon)\psi_1(u)du
\end{multline}
 and $s_{\pm}$ satisfies \eqref{eq:estder1a}. The same type of expansion works for the integral from $-\infty$ to $x$. Thus by \eqref{invlap} we have
$$\mathcal{G}(\psi_1)=\frac{2\epsilon\mathcal{G}_0(\psi_1)+(1+\epsilon) R_3(x;\epsilon)}{W(\epsilon)}$$ for some $R_3$ satisfying estimates of the type \eqref{rrbd}. Note that $W(\epsilon)$ satisfies \eqref{eqq3}. Obviously \eqref{rrbd} holds with $\ds r_3(x)\frac{\epsilon^{m-1}\ln\epsilon}{(1+\epsilon\langle x\rangle)^{m+2}}$ replacing $R$. Thus \eqref{hardeq} is valid for $|\epsilon|\geqslant 1$ as well.
\end{proof}

Similarly we have
\begin{Proposition}
\label{hard0}
For $\epsilon\in \overline{\mathbb{H}}\backslash \{0\}$ we have
\begin{equation}\label{hardeq0}\mathcal{G}(\epsilon\psi_0)=r_4(x)\frac{\epsilon^{m}\ln\epsilon}{(1+\epsilon\langle x\rangle)^{m+3}}-\frac{ \psi_0(x)\epsilon }{(\epsilon+1)^2} +\frac{\epsilon}{\epsilon+1}\mathcal{G}_0(\psi_0(x))+\frac{\epsilon}{\epsilon+1}\mathcal{G}_1(\psi_0'(x))+ R_4(x;\epsilon)\end{equation}
where
\begin{equation}\label{eqr4}
||\langle x\rangle^{-2}r_4(x)||_{\infty}\lesssim ||\langle x\rangle^2\psi_0(x)||_{1}
\end{equation}
$$ ||\psi_0(x)||_{\infty} \lesssim ||\psi_0'(x)||_{1}$$
$$\mathcal{G}_1(\psi):=\frac{1}{2(1+\epsilon)}\(\int_{\infty}^{x}e^{-\epsilon (u-x)}\psi(u)du+\int_{-\infty}^{x}e^{\epsilon (u-x)}\psi(u)du\)$$
\begin{multline}\label{rrbd4}
||\langle x\rangle^{-k-2}\frac{\partial^{k} }{\partial \epsilon^{k}}R_4(x;\epsilon)||_{\infty}\lesssim (1+|\epsilon|)^{-2}(||\langle x\rangle^{k+2}\psi_0(x)||_{1}+||\langle x\rangle^{k+2}\psi_0'(x)||_{1}) ~ (0\leqslant  k\leqslant  m)\\
||\langle x\rangle^{-m-3}\frac{\partial^{m+1} }{\partial \epsilon^{m+1}}R_4(x;\epsilon)||_{\infty}\lesssim (|\epsilon|^{\delta}+|\epsilon|)^{-2}(||\langle x\rangle^{m+3}\psi_0(x)||_{1}+||\langle x\rangle^{m+3}\psi_0'(x)||_{1})
\end{multline}
\end{Proposition}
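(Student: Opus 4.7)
The proof follows the same scheme as Proposition \ref{hard1}, with one additional integration-by-parts step to absorb the factor $\epsilon$ multiplying $\psi_0$. By linearity $\mathcal{G}(\epsilon\psi_0)=\epsilon\,\mathcal{G}(\psi_0)$, so one approach is to apply Proposition \ref{hard1} verbatim with $\psi_0$ replacing $\psi_1$; however, this only produces the leading singularity with denominator $(1+\epsilon\langle x\rangle)^{m+2}$ and controls only $m$ derivatives of the remainder, while \eqref{hardeq0}--\eqref{rrbd4} require $(1+\epsilon\langle x\rangle)^{m+3}$ and $m+1$ controlled derivatives. To gain this extra order I would retain one further term in the expansion \eqref{y1expa} of $y_\pm$ and redo the bookkeeping of \eqref{eq98}--\eqref{eq101}; the extra factor of $\epsilon$ in front of $\psi_0$ is precisely what makes this additional smoothness available, since it contributes one more power that can be traded for an $x$-derivative via integration by parts.

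The three explicit non-singular terms in \eqref{hardeq0} arise from integration by parts using $\epsilon e^{\mp\epsilon(u-x)}=\mp\partial_u e^{\mp\epsilon(u-x)}$ inside the integrals defining $\mathcal{G}_0(\epsilon\psi_0)$. The boundary contributions at $u=\pm\infty$ vanish because $\psi_0,\psi_0'\in L^1$ and $\Re\epsilon\ge 0$, while those at $u=x$ produce multiples of $\psi_0(x)$ and leave residual integrals in $\psi_0'$. A short calculation yields
$$\epsilon\,\mathcal{G}_0(\psi_0)=-\frac{\psi_0(x)}{1+\epsilon}+\mathcal{G}_1(\psi_0').$$
Iterating once via $\tfrac{1}{1+\epsilon}=1-\tfrac{\epsilon}{1+\epsilon}$ together with the derived identity $\tfrac{\epsilon}{1+\epsilon}\mathcal{G}_0(\psi_0)=-\tfrac{\psi_0(x)}{(1+\epsilon)^2}+\tfrac{1}{1+\epsilon}\mathcal{G}_1(\psi_0')$ rearranges $\epsilon\,\mathcal{G}_0(\psi_0)$ exactly into the three displayed pieces $-\psi_0(x)\epsilon/(\epsilon+1)^2$, $\tfrac{\epsilon}{\epsilon+1}\mathcal{G}_0(\psi_0)$ and $\tfrac{\epsilon}{\epsilon+1}\mathcal{G}_1(\psi_0')$. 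This particular grouping is chosen so that each extracted piece is individually controllable by $\|\psi_0\|_1$ or $\|\psi_0'\|_1$; the bound $\|\psi_0\|_\infty\le\|\psi_0'\|_1$ is simply $\psi_0(x)=\int_{-\infty}^x\psi_0'(u)\,du$.

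For the remainder $R_4$ the two-regime estimate proceeds as for $R$ in Proposition \ref{hard1}: for $|\epsilon|\le 1$ substitute \eqref{y1expa} into \eqref{invlap} with $\epsilon\psi_0$ in place of $\psi_1$, collect the four cross terms $\tilde G_{j,k}$, and bound them and their $\epsilon$-derivatives using \eqref{r0bd}; for $|\epsilon|\ge 1$ use \eqref{y1expb} together with \eqref{eq:estder1}--\eqref{eq:estder1a} of Proposition \ref{allx}, together with Note \ref{wron} for the Wronskian. The weight count and the $(1+|\epsilon|)^{-2}$ decay in \eqref{rrbd4} follow by exactly the same pattern as \eqref{eq98}--\eqref{eq101}, shifted up by one order in $\epsilon$ because of the extra factor, with the $\psi_0'$ weights entering via the boundary integration-by-parts identities above. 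The main obstacle I anticipate is the top-order bound $\partial^{m+1}_\epsilon R_4$ with the $(|\epsilon|^\delta+|\epsilon|)^{-2}$ weight: since Proposition \ref{allx} supplies only $m+1$ controlled $\epsilon$-derivatives of $R_0^\pm$, with the topmost one carrying a $|\epsilon|^{-1-\delta}$ loss near $\epsilon=0$, the counting must be arranged so that at most $m$ derivatives actually land on the $R_0^\pm$ factors, the final derivative being absorbed either into a boundary term or into a $\psi_0'$ integral produced by the integration-by-parts step described above.
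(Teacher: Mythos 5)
Your proposal is correct and follows essentially the same route as the paper: the explicit terms in \eqref{hardeq0} are produced by exactly the integration by parts you describe (the paper carries it out on $e^{-(\epsilon+1)u}e^u(1+s_\pm)\psi_0$ inside \eqref{invlap} rather than on $\mathcal{G}_0$ directly, but your identity $\epsilon\,\mathcal{G}_0(\psi_0)=-\psi_0(x)/(1+\epsilon)+\mathcal{G}_1(\psi_0')$ and its iteration reproduce the same three pieces), and the remainder bounds are obtained by rerunning \eqref{numg}--\eqref{eq101} with $\epsilon\psi_0$ in place of $\psi_1$ for $0\leqslant k\leqslant m+1$. Your only superfluous step is the suggestion to retain a further term in \eqref{y1expa}: the extra factor of $\epsilon$ alone shifts the derivative count by one, since $\partial_\epsilon^{m+1}(\epsilon G)=\epsilon\,\partial_\epsilon^{m+1}G+(m+1)\partial_\epsilon^{m}G$ and the $|\epsilon|^{-1-\delta}$ loss in the top derivative of $R_0^{\pm}$ is compensated by the prefactor $\epsilon$, exactly as in the paper.
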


\begin{proof}
The proof is essentially the same as that of Proposition \ref{hard1}. In the case $|\epsilon |\leqslant 1$, we simply use \eqref{numg}-\eqref{eq101} with $\psi_1$ replaced by $\epsilon\psi_0$ for $0\leqslant k\leqslant m+1$, which gives the counterpart of \eqref{eqrhat} as
\begin{equation}\label{eqrhat2}
\mathcal{G}(\epsilon\psi_0)= r_4(x)\epsilon^{m}\ln\epsilon+\hat{R}_4(x,\epsilon)\end{equation}
where $r_4$ satisfies \eqref{eqr4} and $\hat{R}_4$ satisfies the same estimates as $R_4$ (see
\eqref{rrbd4}). The rest follows in the same way as \eqref{rrh}. Note that we have the obvious inequality $|\psi_0(x)|\lesssim ||\psi_0'(x)||_{1} $.

For $|\epsilon |\geqslant 1$ we use integration by parts to get
\begin{multline}\label{intp1}
\epsilon e^{\epsilon x} \int_{\infty}^{x}y_+(u)\psi_0(u)du=\epsilon e^{\epsilon x}\int_{\infty}^{x}e^{-(\epsilon+1) u}e^u(1+s_+(u;\epsilon))\psi_0(u)du \\
=-\frac{\epsilon }{\epsilon+1}(1+s_+(x;\epsilon))\psi_0(x)+\frac{\epsilon}{\epsilon+1}e^{\epsilon x}\int_{\infty}^{x}e^{-\epsilon u}\left(\psi_0'(u)+\psi_0(u)\right)du  \\+\frac{\epsilon}{\epsilon+1}e^{\epsilon x}\int_{\infty}^{x}e^{-\epsilon u}\left(s_+'(u;\epsilon)\psi_0(u)+s_+(u;\epsilon)(\psi_0'(u)+\psi_0(u))\right)du
\end{multline}
Similarly
\begin{multline}\label{intp2}
-\epsilon e^{-\epsilon x}\int_{-\infty}^{x}y_-(u)\psi_0(u)du
=-\frac{\epsilon }{\epsilon+1}(1+s_-(x;\epsilon))\psi_0(x)+\frac{\epsilon}{\epsilon+1}e^{-\epsilon x}\int_{-\infty}^{x}e^{\epsilon u}\left(\psi_0'(u)-\psi_0(u)\right)du  \\+\frac{\epsilon}{\epsilon+1}e^{-\epsilon x}\int_{-\infty}^{x}e^{\epsilon u}\left(s_-'(u;\epsilon)\psi_0(u)+s_-(u;\epsilon)(\psi_0'(u)-\psi_0(u))\right)du
\end{multline}
Since $s_{\pm}$ satisfy \eqref{eq:estder1a}, by \eqref{invlap}, \eqref{intp1} and \eqref{intp2} we have
$$\mathcal{G}(\epsilon\psi_0)=-\frac{2\epsilon \psi_0(x) }{(\epsilon+1)W(\epsilon)}+\frac{2\epsilon\mathcal{G}_0(\psi_0(x))+\mathcal{G}_1(\psi_0'(x))}{W(\epsilon)}+
\frac{\epsilon \tilde{R}_3(x;\epsilon)}{W(\epsilon)}$$
for some $\tilde{R}_3$ satisfying estimates of the type \eqref{rrbd4}. The rest follows from \eqref{eqq3}.
\end{proof}

\section{Time decay of $\psi$}\label{timedec}

\begin{proof}[Proof of Theorem \ref{xbounds}] We focus on the case $\frac{\sin(t\sqrt{A})}{\sqrt{A}}\psi_1$ since the proof for $\cos(t\sqrt{A})\psi_0$  is analogous.

We have using the decomposition \eqref{hardeq} that
\begin{multline}\label{finalpre}
\psi(x,t)=\mathcal{L}^{-1}\hat{\psi}(x,\epsilon)\\=\frac{1}{2\pi i}\int_{-\infty i}^{\infty i}e^{\epsilon t}\left(r_3(x)\frac{\epsilon^{m-1}\ln \epsilon}{(1+\epsilon\langle x\rangle)^{m+1}}+R(x;\epsilon)\right)d\epsilon+\mathcal{L}^{-1}\mathcal{G}_0(\psi_1(x))\\
\end{multline}
where the inverse Laplace transform $\mathcal{L}^{-1}$ can be represented using the Bromwich integral formula for the terms $\ds r_3(x)\frac{\epsilon^{m-1}\ln \epsilon}{(1+\epsilon\langle x\rangle)^{m+1}}$ and $R$, and we have
\begin{Lemma} We have the estimate
\begin{equation}\label{g0est}
|\mathcal{L}^{-1}\mathcal{G}_0(\psi_1(x))|\lesssim (\langle x\rangle/\langle t\rangle)^{m+1}\|\langle x\rangle^{m+1}\psi_1(x)\|_1
\end{equation}
\end{Lemma}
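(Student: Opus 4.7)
The plan is to compute $\mathcal{L}^{-1}\mathcal{G}_0(\psi_1)$ in essentially closed form and then estimate by a simple region decomposition. After the substitutions $s=u-x$ and $s=x-u$, the two integrals in \eqref{defg0} are Laplace transforms in the dual variable $\epsilon$ of the one-sided functions $s\mapsto\psi_1(x+s)$ and $s\mapsto\psi_1(x-s)$:
$$\int_{\infty}^{x}e^{-\epsilon(u-x)}\psi_1(u)\,du = -\int_{0}^{\infty}e^{-\epsilon s}\psi_1(x+s)\,ds,\qquad \int_{-\infty}^{x}e^{\epsilon(u-x)}\psi_1(u)\,du = \int_{0}^{\infty}e^{-\epsilon s}\psi_1(x-s)\,ds.$$
Since $\mathcal{L}^{-1}[(1+\epsilon)^{-1}](t)=e^{-t}\mathbf{1}_{t\geq 0}$, the convolution theorem then yields the explicit formula
$$\mathcal{L}^{-1}\mathcal{G}_0(\psi_1)(x,t) \;=\; -\frac{1}{2}\int_{0}^{t}e^{-(t-s)}\bigl[\psi_1(x+s)+\psi_1(x-s)\bigr]ds.$$

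With this formula in hand, \eqref{g0est} reduces to elementary bookkeeping, split into two regimes. In the regime $\langle t\rangle\leq 4\langle x\rangle$, the prefactor $(\langle x\rangle/\langle t\rangle)^{m+1}$ is bounded below by a positive constant, so the trivial estimate
$$\int_{0}^{t}e^{-(t-s)}|\psi_1(x\pm s)|\,ds\;\leq\;\|\psi_1\|_1\;\leq\;\|\langle y\rangle^{m+1}\psi_1(y)\|_1$$
already suffices. In the complementary regime $\langle t\rangle\geq 4\langle x\rangle$, I would split the $s$-integral at $t/2$: on $[0,t/2]$ the factor $e^{-(t-s)}\leq e^{-t/2}$ supplies super-polynomial decay in $t$, while on $[t/2,t]$ the constraint $|x|\leq t/4$ forces $|x\pm s|\geq t/4$, so $\langle x\pm s\rangle\gtrsim \langle t\rangle$, and a factor $\langle t\rangle^{-(m+1)}$ can be extracted by inserting $\langle x\pm s\rangle^{-(m+1)}\cdot\langle x\pm s\rangle^{m+1}$ inside the integral. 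Combining the two pieces gives $|\mathcal{L}^{-1}\mathcal{G}_0(\psi_1)(x,t)|\lesssim \langle t\rangle^{-(m+1)}\|\langle y\rangle^{m+1}\psi_1\|_1$, which is stronger than the claimed bound since $\langle x\rangle\geq 1$.

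The main obstacle is essentially clerical rather than analytical: there is no hidden cancellation, singular integral, or contour deformation to perform, in contrast with the estimates of the leading and remainder terms in \eqref{finalpre}. The only genuine care required is in justifying the Laplace-convolution step for $\psi_1$ assumed only to satisfy a polynomial-weighted $L^1$ bound—this weight guarantees absolute convergence of $\int_{0}^{\infty}e^{-\epsilon s}\psi_1(x\pm s)\,ds$ for $\Re\epsilon>0$ and enough decay of $\mathcal{G}_0(\psi_1)$ on the Bromwich contour to evaluate the inverse transform, so the closed-form identity above is rigorous and the rest is $L^1$/$L^\infty$ book-keeping.
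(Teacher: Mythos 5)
Your proposal is correct, and its key step coincides with the paper's: both first invert the Laplace transform in closed form, writing $\mathcal{L}^{-1}\mathcal{G}_0(\psi_1)$ as the one-sided convolution $-\tfrac12\int_0^t e^{-(t-s)}[\psi_1(x+s)+\psi_1(x-s)]\,ds$ (the paper states this formula, in the variable $u=x\pm s$, as a "direct calculation," whereas you derive it from the convolution theorem — same identity). Where you diverge is in the estimation: the paper integrates by parts against the weight $(\sqrt{u^2+1})^{m+1}$ and bounds the resulting boundary and integral terms, while you split into the regimes $\langle t\rangle\leq 4\langle x\rangle$ (where the trivial $L^1$ bound suffices because the prefactor is bounded below) and $\langle t\rangle\geq 4\langle x\rangle$ (where splitting the $s$-integral at $t/2$ and inserting the weight $\langle x\pm s\rangle^{m+1}$ directly gives the decay). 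Your route avoids integration by parts entirely, is arguably cleaner, and in the far regime yields the slightly stronger bound $\langle t\rangle^{-(m+1)}\|\langle y\rangle^{m+1}\psi_1\|_1$ with no $\langle x\rangle^{m+1}$ loss; the paper's version produces exactly the stated mixed bound. One small point of care in your second regime: $\langle t\rangle\geq 4\langle x\rangle$ gives $|x|\leq\langle t\rangle/4$ rather than $t/4$, but since that regime forces $t\geq\sqrt{15}$ one still gets $|x\pm s|\gtrsim t$ for $s\geq t/2$, so the argument goes through with an adjusted constant.
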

\begin{proof}
By direct calculation
$$\mathcal{G}_0(\psi_1(x))=\mathcal{L}\(\frac12\int_{t+x}^{x}e^{-t+u-x}\psi_1(u)du- \frac12\int_{x-t}^{x}e^{-t-u+x}\psi_1(u)du\)$$
The estimate \eqref{g0est} follows from integration by parts. For instance we have
\begin{multline}
\int_{t+x}^{x}e^{-t+u-x}\psi_1(u)du
=\frac{e^{-t+u-x}}{(\sqrt{u^2+1})^{m+1}}\bigg|_{t+x}^x\int_{t+x}^x\(\sqrt{u^2+1}\)^{m+1}\psi_1(u)du\\- \int_{t+x}^{x}\(\frac{e^{-t+u-x}}{\(\sqrt{u^2+1}\)^{m+1}}\)'\(\int_{0}^u \(\sqrt{v^2+1}\)^{m+1}\psi_1(v)dv \)du
\end{multline}
where the first term on the right side can be estimated using the elementary inequality
$$\frac{1}{(\sqrt{(t+x)^2+1})^{m+1}}\lesssim (\langle x \rangle/\langle t\rangle)^{m+1}$$
and the last term can be estimated by noting that
\begin{multline}
 \left|\int_{t+x}^{x}\(\frac{e^{-t+u-x}}{(\sqrt{u^2+1})^{m+1}}\)'du\right|\leqslant \left|\int_{t+x}^{x}\frac{e^{-t+u-x}}{(\sqrt{u^2+1})^{m+1}}du\right| \leqslant  \int_{x}^{x+t/2}\frac{e^{-t+u-x}}{(\sqrt{u^2+1})^{m+1}}du \\
+\int_{x+t/2}^{x+t}\frac{e^{-t+u-x}}{(\sqrt{u^2+1})^{m+1}}du
\lesssim e^{-t/2}+\sup_{t/2\leqslant  v\leqslant  t}\frac{1}{(\sqrt{(x+v)^2+1})^{m+1}}\lesssim (\langle x \rangle/\langle t\rangle)^{m+1}
\end{multline}
\end{proof}

The leading term on the right hand side of \eqref{finalpre} can be estimated by contour deformation. Since there is a branch cut along $\mathbb{R}^-$, the original integration path from $-i\infty$ to $i\infty$ can be deformed into the path consisting of the vertical line from $-1/2-i\infty$ to $-1/2$, the horizonal line from $-1/2$ to $0$ on the lower side of the branch cut, the horizonal line from $0$ to $-1/2$ on the upper side of the branch cut, and the vertical line from $-1/2$ to $-1/2+i\infty$.
Therefore, by Watson's Lemma we have for large $t$
\begin{multline}\label{wats}
\frac{1}{2\pi i}\int_{-\infty i}^{\infty i}e^{\epsilon t}r_3(x)\frac{\epsilon^{m-1}\ln \epsilon}{(1+\epsilon\langle x\rangle)^{m+1}}d\epsilon =-\int_{0}^{-1/2}e^{\epsilon t}\frac{r_3(x)\epsilon^{m-1}}{(1+\epsilon\langle x\rangle)^{m+1}}d\epsilon+O(e^{-t/2})r_3(x)\\
=(-1)^{m+1}(m-1)!r_3(x)t^{-m}(1+O(t^{-1})\langle x\rangle)+O(e^{-t/2})r_3(x)
\end{multline}
Note that the left side of \eqref{wats} is obviously bounded by $const.|r_3(x)|$ uniformly for all $t\geqslant0$ and $r_3$ satisfies \eqref{r3est}.
Finally by integration by parts and \eqref{rrbd} we have for $t>0$
$$\left|\frac{1}{2\pi i}\int_{-\infty i}^{\infty i}e^{\epsilon t}R(x;\epsilon)d\epsilon\right|\lesssim t^{-m}\left|\int_{-\infty i}^{\infty i}e^{ \epsilon t}\frac{\partial^m R(x;\epsilon)}{\partial \epsilon^m}d\epsilon \right|$$
Since $R(x;\epsilon)$ and $\ds \frac{\partial^m R(x;\epsilon)}{\partial \epsilon^m}$ satisfy \eqref{rrbd}, we have for $t\geqslant 0$
\begin{equation}\label{rl1est}
||\langle x\rangle^{-m-2}\mathcal{L}^{-1}R(x;\epsilon)||_{\infty}\lesssim \langle t\rangle ^{-m}||\langle x\rangle^{m+2}\psi_1(x)||_{1}
\end{equation}
Furthermore, it follows from the Riemann-Lebesgue lemma that
\begin{equation}\label{rl1est1}
\lim_{t\to\infty}t^m\mathcal{L}^{-1}R(x;\epsilon)=0
\end{equation}
We define $\hat{r}_1(x)=(-1)^{m+1}(m-1)!r_3(x)$ and
$$R_1(x,t)=\langle t\rangle^{m}\mathcal{L}^{-1}\(\mathcal{G}_0(\psi_1(x))+R(x;\epsilon)+r_3(x)\frac{\epsilon^{m-1}\ln \epsilon}{(1+\epsilon\langle x\rangle)^{m+1}}\)-\hat{r}_1(x)$$
The result for $\frac{\sin(t\sqrt{A})}{\sqrt{A}}\psi_1$ in Theorem \ref{T1} then follows from \eqref{finalpre} using \eqref{g0est}, \eqref{wats}, \eqref{rl1est}, and \eqref{rl1est1}; for $\cos(t\sqrt{A})\psi_0$ the estimates follow from Proposition \ref{hard0} in a similar way.
\end{proof}

\subsection{Genericity of decay rate}
\begin{Remark}\label{gende}
In view of the definition of $r_3$ (below \eqref{eqrhat}) it is clear that $r_3$ is nonzero for generic initial condition $\psi_1$, meaning the time decay $t^{-m}$ is generic.
\end{Remark}

\section{More general potentials}\label{extend}
\subsection{Sums of inverse powers}
 Assume  $\ds V(x)=const.x^{-\alpha_1^{\pm}}(1+\sum_{k=1}^n a_k^{\pm} x^{-\beta_k^{\pm}})$ for large ${\pm}x$ where $\alpha_1^{\pm}>2$ and  $\beta_k^{\pm}>0$.

Without loss of generality we study large $x$. Now \eqref{eq:eq7} has the form
\begin{multline}
  \label{eq:eq7n}
  F({\tau})=const.(\epsilon\tau)^{\alpha_1^{+}-1}(1+\sum_{k=1}^n b_k (\epsilon\tau)^{\beta_k^{+}})\\+const.\epsilon^{\alpha_1^{+}-2}\int_0^\tau
   (\tau-u)^{\alpha_1^{+}-1}(1+\sum_{k=1}^n b_k (\epsilon(\tau-u))^{\beta_k^{+}})\frac{F({u})}{{u}({u}+2)}du
\end{multline}
It can be shown that for large $\tau$
\begin{multline}
\frac{F({\tau})}{\epsilon^2\tau(\tau+2)}
=const.(\epsilon\tau)^{\alpha_1^{+}-3}(1+\sum_{k=1}^n b_k  (\epsilon\tau)^{\beta_k^{+}})\left(1+\sum_{k=0}^n \sum_{l=1}^{\infty} \sum_{m=0}^{l}c_{klm}  (\epsilon\tau)^{l(\alpha_1^{+}-2+\beta_k^{+})}(\ln \tau)^m\right)
\end{multline}
where $\beta_0=0$ and $\ln$ terms are only present for $\alpha_1^{+}\in \mathbb{N}$.

The counterpart of \eqref{sm4} is now the expansion
$$s(x;\epsilon)=\epsilon^{\alpha_1^{+}-2}\sum_{k=0}^n (\tilde{c}_k(x)+\epsilon c_k(x))  \epsilon^{\beta_k^{+}}+\epsilon^{\alpha_1^{+}-2}\ln \epsilon\sum_{k=0}^n (\tilde{C}_k(x)+\epsilon C_k(x))\epsilon^{\beta_k^{+}}+...$$
where terms with higher orders of $\ln$ are omitted and the $\ln$ terms are only present for $\alpha_1^{+}\in \mathbb{N}$.

This implies the counterpart of the expansion \eqref{f1expa}
$$y_+(x;\epsilon)=r(\epsilon)e^{-\epsilon x}(\hat{D}_1(x)\epsilon^{\alpha_1^{+}-1}+\hat{D}_2(x)\epsilon^{\alpha_1^{+}-1}\ln \epsilon+R(x;\epsilon))$$
where $\ds\frac{\partial^k R(x;\epsilon)}{\partial \epsilon^k}$ are bounded by $\epsilon^{\delta-1}$ for $0\leqslant  k\leqslant  \lceil \alpha_1^{+} \rceil$ and some $\delta>0$, and $D_2=0$ except for $\alpha_1\in \mathbb{N}$.

Arguments similar to those showing Proposition \ref{hard1} lead to the same type of expansion for $\hat{\psi}$, which then implies that
$$\frac{\sin(t\sqrt{A})}{\sqrt{A}}\psi_1\sim \hat{r}_1(x)t^{-\alpha}$$
$$\cos(t\sqrt{A})\psi_0\sim\hat{r}_0(x)t^{-\alpha-1}$$
The detailed estimates for the higher order remainder as in Theorem \ref{xbounds} can be obtained in similar ways.

\subsection{Inverse power with higher order correction}
Here we discuss the general $m$ analog of potentials of the type in \cite{sch1}.
Assume $V(x)=V_0(x)+V_1(x)$ and $V_0(x)=c_v/x^m$ for $|x|\geqslant x_+$, $V_1$ is piecewise continuous, $|V_1^{(k)}(x)|\lesssim \langle x \rangle^{-m-k-\delta_1}$ where $0\leqslant k\leqslant m+2$ and $\delta_1>3$. One can show that for $x\geqslant x_+$ and $|\epsilon|\leqslant  1/x$, $\tilde{y}_1$ has the expansion
\begin{equation}
\label{canceln}
\tilde{y}_1(x;\epsilon)=r(\epsilon) \bigg(
\tilde{B}_0(x)+\tilde{B}_1(x)\epsilon^{m-1}\ln\epsilon+f_d(x,\epsilon)\bigg)
\end{equation}
 where $\tilde{B}_k$ solves $f''(x)=V(x)f(x)$ with  $\tilde{B}_0\sim \Phi_1=:B_0$, $\tilde{B}_1\sim B_1$ for large $x$ and $f_d(x,\epsilon)$ satisfies \eqref{fhat}.

Indeed, by \eqref{cancel0} and standard ODE analysis one can find $\tilde{B}_k(x)$ with $\tilde{B}_k''(x)=V(x)\tilde{B}_k(x)$, $|\tilde{B}_k(x)-B_k(x)|\lesssim \langle x\rangle^{2-m-\delta_1}\langle B_k(x)\rangle$ and $|\tilde{B}_k'(x)-B_k'(x)|\lesssim \langle x\rangle^{1-m-\delta_1}\langle B_k'(x)\rangle$.

By \eqref{cancel0} we have
\begin{multline}
\label{cancelnn}
f_+(x;\epsilon)=r(\epsilon) \bigg(
\hat{\Phi}_1(x)+B_1(x)\epsilon^{m-1}\ln\epsilon+f_c(x,\epsilon)\bigg) =:r(\epsilon) \bigg(\phi (x;\epsilon;B) +f_c(x,\epsilon)\bigg)
\end{multline}
where $f_c$ satisfies \eqref{fhat} and
$$f_c''(x,\epsilon)-2\epsilon f_c'(x,\epsilon)-V_0(x)f_c(x,\epsilon)=2\epsilon\phi (x;\epsilon;B')$$

Similarly we write $\tilde{f}_1(x;\epsilon)=r(\epsilon) \(\phi (x;\epsilon;\tilde{B}) +f_c(x,\epsilon)+g_1(x,\epsilon)\)$, which implies $\tilde{g}_1(x,\epsilon)=e^{-\epsilon x}g_1(x,\epsilon)$ satisfies the equation
$$\tilde{g}_1''(x,\epsilon)-\epsilon^2 \tilde{g}_1(x,\epsilon)-V_0(x) \tilde{g}_1(x,\epsilon)=e^{-\epsilon x}\phi_1(x,\epsilon)  +V_1(x) \tilde{g}_1(x,\epsilon)$$
where $\phi_1(x,\epsilon) =2\epsilon \phi (x;\epsilon;\tilde{B}'-B')+\epsilon V_1(x)\hat{f}_c(x,\epsilon)$ and $\hat{f}_c=f_c/\epsilon$. Equivalently we have the integral equation
\begin{equation}\label{ggg}
g_1(x)=\mathcal{G}(e^{-\epsilon x}\phi_1(x;\epsilon))+\mathcal{G}(V_1(x)g_1(x))
\end{equation}
Using Proposition \ref{allx} we see that $|\phi_1(x;\epsilon)|\lesssim |\epsilon|\langle x\rangle^{1-m-\delta_1}$ for all $x\geqslant x_+$ and thus \eqref{ggg} is contractive under the norm $||\langle x\rangle^{m-2+\delta_1}g_1(x)||_{\infty}$, and by taking derivatives of \eqref{ggg} we have
$$\left| \frac{\partial^k g_1(x;\epsilon)}{\partial \epsilon^k} \right| \lesssim x^{2-m-\delta_1+k} \
(0\leqslant  k\leqslant  m-1); \ \ \left| \frac{\partial^k g_1(x;\epsilon)}{\partial \epsilon^k} \right| \lesssim |\epsilon|^{m-k-\delta}x^{2-\delta_1} \
(m\leqslant  k\leqslant  m+1)$$
since
$$\left|e^{\epsilon x}\int_{\infty}^x e^{-2\epsilon t}t^k\phi_1(t;\epsilon)dt\right|\lesssim 1$$
if $k-m-\delta_1<-2$.
Thus if $\delta_1>3$ then the counterpart of Proposition \ref{jostf} holds. The rest of the proof is similar.

\section{Appendix}
For completeness, in this section we provide a short self-contained proof
justifying the use of the Laplace transform.
\begin{Proposition}\label{lapt}
  Assume the initial conditions $f(x)=u(x,0)$ and $g(x)=u_t(x,0)$ are in
  $L^1(\RR)$ and $V\in L^{\infty}(\RR)$. Then, if $\nu>\sqrt 2\|V\|_{\infty}^{\frac12}$  we have
  $\sup_{t>0}e^{-\nu t}\|u(t,\cdot)\|_1<\infty$, and thus $u(t,x)$ is Laplace transform in $t$.
\end{Proposition}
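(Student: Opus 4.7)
The natural approach is Duhamel: rewrite \eqref{wave} as a fixed point equation using the free one-dimensional wave propagator, then run a contraction in an exponentially weighted $L^1$ space. First I would use d'Alembert's formula and Duhamel's principle for $\partial_t^2-\partial_x^2$ with forcing $-V u$ to convert the PDE to
\begin{equation*}
u(x,t) \;=\; u_0(x,t)\;-\;\tfrac{1}{2}\int_0^t\!\int_{x-(t-s)}^{x+(t-s)} V(y)\,u(y,s)\,dy\,ds,
\end{equation*}
where
\begin{equation*}
u_0(x,t)\;=\;\tfrac12[f(x+t)+f(x-t)]\;+\;\tfrac12\int_{x-t}^{x+t} g(y)\,dy.
\end{equation*}
A direct application of Fubini to each term gives $\|u_0(\cdot,t)\|_1\leqslant \|f\|_1+t\,\|g\|_1$, which already lives in any reasonable exponentially weighted space.

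Next I would introduce the Banach space $X_\nu=\{u\in C([0,\infty);L^1(\RR)):\|u\|_\nu:=\sup_{t\geqslant 0}e^{-\nu t}\|u(\cdot,t)\|_1<\infty\}$ and let $\mathcal{T}$ denote the map sending $v$ to the right-hand side of the Duhamel identity above (with $v$ in place of $u$). Swapping the $x$ and $y$ integrations in the Duhamel term, and noting that for fixed $(y,s)$ the $x$-domain has length $2(t-s)$, one obtains
\begin{equation*}
\bigl\|\tfrac{1}{2}\!\int_0^t\!\!\int_{x-(t-s)}^{x+(t-s)}\! V(y)v(y,s)\,dy\,ds\bigr\|_{L^1_x}\;\leqslant\;\|V\|_\infty\!\int_0^t (t-s)\,\|v(\cdot,s)\|_1\,ds.
\end{equation*}
Multiplying by $e^{-\nu t}$, changing variables $r=t-s$, and taking the sup in $t$ yields
\begin{equation*}
\|\mathcal{T}v-\mathcal{T}w\|_\nu \;\leqslant\;\|V\|_\infty\,\|v-w\|_\nu\int_0^\infty r\,e^{-\nu r}\,dr\;=\;\frac{\|V\|_\infty}{\nu^2}\,\|v-w\|_\nu.
\end{equation*}
Under the hypothesis $\nu>\sqrt 2\,\|V\|_\infty^{1/2}$ we have $\|V\|_\infty/\nu^2<\tfrac12$, so $\mathcal{T}$ is a strict contraction on $X_\nu$; the linear growth of $u_0$ in $t$ guarantees $u_0\in X_\nu$ for every $\nu>0$.

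The Banach fixed-point theorem then produces a unique $u\in X_\nu$ solving the integral equation, and a routine bookkeeping argument (using that \eqref{wave} has at most one $L^2_{\text{loc}}$ finite-energy solution for given initial data) identifies this $u$ with the solution of \eqref{wave} with data $(f,g)$. The bound $\|u(\cdot,t)\|_1\lesssim e^{\nu t}$ is exactly $\sup_{t>0}e^{-\nu t}\|u(\cdot,t)\|_1<\infty$, and it immediately gives absolute convergence of $\int_0^\infty e^{-\epsilon t}u(x,t)\,dt$ as an element of $L^1_x$ whenever $\Re\,\epsilon>\nu$, proving Laplace transformability. I expect the only real obstacle to be the bookkeeping identification of the $X_\nu$ fixed point with the classical/weak \eqref{wave}-solution and continuity in $t$ with values in $L^1$; both are handled by approximating $(f,g)$ by smooth compactly supported data, where the d'Alembert representation makes continuity transparent, and then passing to the limit using the contraction estimate uniformly in the approximation.
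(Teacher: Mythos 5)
Your proposal is correct and follows essentially the same route as the paper: Duhamel's principle converts \eqref{wave} into a fixed-point equation, and a contraction argument in the exponentially weighted space $\sup_{t>0}e^{-\nu t}\|u(\cdot,t)\|_1$ with the Fubini estimate $\|V\|_\infty\int_0^t(t-s)e^{\nu s}ds\lesssim \|V\|_\infty\nu^{-2}e^{\nu t}$ gives the result under $\nu>\sqrt{2}\|V\|_\infty^{1/2}$. The only difference is cosmetic (the paper writes the averaging kernels via characteristic functions and addresses regularity by a brief smoothing remark rather than your approximation argument).
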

\begin{proof}
  We use the Duhamel principle to write \eqref{wave} in the form
  \begin{multline}
    \label{eq:duhamel}
    u=\mathcal{A}u;\ \mathcal{A}u:=
\frac{f(x-t)+f(x+t)}{2}\\ +\tfrac12\int_{-\infty}^\infty \chi_t(y-x)g(y)dy+\frac{1}{2}\int_0^t\int_{-\infty}^\infty u(y,s)V(y)\chi_{t-s}(y-x))dy ds
  \end{multline}
where $\chi_{a}$ is the characteristic function of the interval $[-a,a]$. Consider the Banach space
\begin{equation}
  \label{eq:Bs}
  \mathcal{B}=\{u\in C(\RR)|\|u\|_{\nu}:=\sup_{t\in\RR^+}e^{-\nu t}\|u(t,\cdot)\|_1<\infty\};\ \ (\nu>\sqrt 2\|V\|_{\infty}^{\frac12})
\end{equation}
Applying Fubini to integrate first in $x$, we see that
$\|\int_{-\infty}^\infty \chi_t(y-x)g(y)dy\|_1\leqslant  2t\|g\|_1$ and (since by
definition
$\|u(\cdot,s)\|_1\leqslant  \|u\|_{\nu}e^{\nu s}$)
\begin{multline}
  \label{eq:eqFb1}
 \sup_{t>0}e^{-\nu t} \left\|\int_0^t\int_{-\infty}^\infty u(y,s)V(y)\chi_{t-s}(y-x)dyds
  \right\|_1 \\\leqslant y \|V\|_\infty \|u\|_{\nu}\sup_{t>0}e^{-\nu t}\int_0^t 2(t-s)e^{\nu s}ds\le
  2\|V\|_\infty\nu^{-2} \|u\|_{\nu}
\end{multline}
Using \eqref{eq:eqFb1} we see that    $\mathcal{A}:\mathcal{B}\to \mathcal{B}$
is contractive. Also, assuming $f, g$  and $V$ are
smooth, the solution is seen to be  smooth too:
 since $u\in L^1$,  Duhamel's formula shows that it is
continuous;  then, as usual,  using
continuity we  derive differentiability, and  inductively, we see that $u$ is smooth.
\end{proof}

\begin{proof}[Proof of Lemma \ref{nm0}]
This is by straightforward calculation. For  $n\geqslant 0$ we get
\begin{multline}
\label{inttau0}
\int_3^{\infty}e^{-\epsilon \tau x} \tau^n(\ln \tau)^ld\tau=\left(\int_0^{\infty}-\int_0^{3}\right)e^{-\epsilon \tau x} \tau^n(\ln \tau)^ld\tau=\frac{1}{(\epsilon x)^{n+1}}\int_0^{\infty}e^{-u}u^n(\ln u\\-\ln(\epsilon x))^ldu-\int_0^{3}e^{-\epsilon \tau x} \tau^n(\ln \tau)^ld\tau
=\frac{1}{(\epsilon x)^{n+1}}\sum_{q=0}^{l}c_{q}^{(n;l)}(\ln (\epsilon x))^q-\int_0^{3}e^{-\epsilon \tau x} \tau^n(\ln \tau)^ld\tau
\end{multline}
where the last term is $R_{a,n}$ and \eqref{unibd1} is immediate, and
$$ c_{q}^{(n;l)}=\left(\int_0^{\infty}e^{-u}u^n(\ln u)^{l-q}du\right)\frac{(-1)^{q} l!}{(l-q)!q!}$$
In particular $c_{1}^{(0;1)}=-1$.

Now \eqref{nm1} for $n=-1$ follows from integration by parts
\begin{equation}\label{nm20}
\int_3^{\infty}e^{-\epsilon \tau x}\tau^{-1}(\ln \tau)^l d\tau=-\frac{1}{l+1}e^{-3\epsilon x}(\ln 3)^{l+1}+\frac{\epsilon x}{l+1}\int_3^{\infty}e^{-\epsilon \tau x}(\ln \tau)^{l+1}d\tau
\end{equation}
where the first term satisfies \eqref{unibd1}, and the last integral in \eqref{nm20} was evaluated in \eqref{inttau0}.

For $n<-1$ we have by integration by parts
\begin{multline}\label{nm10}
\int_3^{\infty}e^{-\epsilon \tau x}\tau^{n}(\ln \tau)^l d\tau=-\frac{1}{n+1}e^{-3\epsilon x}3^{n+1}(\ln 3)^{l}\\-\frac{l}{n+1}\int_3^{\infty}e^{-\epsilon \tau x}\tau^{n}(\ln \tau)^{l-1}d\tau
+\frac{\epsilon x}{n+1}\int_3^{\infty}e^{-\epsilon \tau x}\tau^{n+1}(\ln \tau)^{l}d\tau
\end{multline}
and \eqref{nm1} follows by induction on $l$ and $n$ using \eqref{nm10} and integration by parts.
\end{proof}

\section{Acknowledgments.}
This work was supported in part by the National Science Foundation
 DMS-01108794  (OC).

\end{document}